\theoremstyle{plain}
\newtheorem{theorem}{Theorem}
\newtheorem{lemma}[theorem]{Lemma}
\newtheorem{proposition}[theorem]{Proposition}
\numberwithin{equation}{section} 
\newtheorem{remark}{Remark}
\begin{document}

\title[The Dirichlet Problem for Curvature Equations]
{The Dirichlet Problem for Curvature Equations in Riemannian Manifolds}

\thanks{Work partially supported by CAPES and CNPq.}

\author[J. H. S. de Lira]{Jorge H. S. de Lira}
\address{Departamento de Matem\'atica \\ Universidade Federal do Cear\'a\\ 
Campus do Pici, Bloco 914\\ Fortaleza, Cear\'a\\ Brazil\\ 60455-760}
\email{jorge.lira@pq.cnpq.br}

\author[F. F. Cruz]{Fl\'avio F. Cruz}
\address{Departamento de Matem\'atica\\ Universidade Regional do Cariri\\ Campus Crajubar \\ 
Juazeiro do Norte, Cear\'a\\ Brazil\\ 63041-141}
\email{flavio.franca@urca.br}

\subjclass[2000]{53A10, 53C42}

\begin{abstract}
We prove the existence of classical solutions to the Dirichlet problem for a class of 
fully nonlinear elliptic equations of curvature type on
Riemannian manifolds. We also derive new second derivative boundary estimates
which allows us to extend some of the existence theorems of Caffarelli, Nirenberg and
Spruck \cite{CNSV} and Ivochkina, Trudinger and Lin \cite{IVO2}, \cite{IVO-LIN-TRU}, \cite{LIN-TRU}
to more general curvature functions under mild conditions on the geometry of the domain.

\end{abstract}

\maketitle


\section{Introduction}
\label{section1}

The aim of this article is to study the classical Dirichlet problem for equations of prescribed
curvature of the form
\begin{equation}
\label{P} F[u]=f(\kappa [u] )= \Psi (x,u)
\end{equation}
defined on a smooth Riemannian manifold $(M^n, \sigma), \, n\geq 2,$ 
where $\kappa [u]$ is the vector in $\mathbb{R}^n$ whose components $\kappa_1, \ldots , \kappa_n$ 
are the principal curvatures of the graph 
$\Sigma=\{(x, u(x)), x\in \Omega\}\subset M\times \mathbb{R}$ 
of a function $u$ defined on a bounded  domain $\Omega\subset M,$ $\Psi$ is a prescribed positive 
function on $\overline{\Omega}\times 
\mathbb{R}$ and $f$ is a general curvature function in a sense that it will made precise later. 
The main examples of general curvature functions
are given by the $k$-th root of the higher order mean curvatures
\begin{equation}
\label{SK}
S_k(\kappa)= \sum_{i_{1}<\ldots <i_{k}} \kappa_{i_1}\cdots
\kappa_{i_k}
\end{equation}
and the $(k-l)$-th root of their
quotients $S_{k,l}=S_k/S_l, \, 0\leq l< k\leq n.$ The mean, scalar, Gauss and harmonic curvatures
correspond to the special cases $k=1,2, n$ in (\ref{SK}) and $k=n, l=n-1$ for the quotients, respectively.

The classical Dirichlet problem associated to equation (\ref{P}) has been extensively studied
(see for instance  \cite{CNSV}, \cite{GUAN-LI}, \cite{GUAN-SPRUCK-2}, \cite{GUAN-SPRUCK-3}, 
\cite{IVO2}, \cite{IVO-LIN-TRU},  \cite{IVO-TOMI}, \cite{SHENG-URBAS-WANG}, \cite{SPRUCK} and \cite{TRU1}).
For domains in the Euclidean space, the first breakthroughs about the solvability of the Dirichlet problem 
\begin{align}
\begin{split}
\label{PD-EQ} F[u]=f(\kappa[u] )&=\Psi \quad\textrm{in}\quad \Omega \\ 
u&=\varphi \quad\textrm{on}\quad\partial \Omega,
\end{split}
\end{align}
were due to Caffarelli, Nirenberg and Spruck 
\cite{CNSV} for general curvature functions
and Ivochkina \cite{IVO1} for the particular cases of higher order mean curvatures (\ref{SK}). 
These authors established the solvability of (\ref{PD-EQ}) for 
the case of uniformly convex domains and zero boundary values. 
In \cite{IVO2} Ivochkina extended her approach to embrace general
boundary values and the more general $k$-convex domains, extending
the result of J. Serrin \cite{SERRIN} on the quasilinear case
corresponding to the mean curvature. 
Despite the cases of higher order mean curvatures be covered by the generality 
allowed in the theorem of Caffarelli, Nirenberg and Spruck \cite{CNSV} 
their result makes use of a strong  technical assumption on the curvature functions,
which precludes the case of quotients
$f=\left(S_{k,l}\right)^{1/(k-l)}, \, 0\leq l< k\leq n.$
However, the weak or viscosity solution approach by Trudinger \cite{TRU1} 
is sufficiently general in what concerns the curvatures functions 
so that it includes those quotient curvature functions. Also in \cite{TRU1} Trudinger establishes the 
existence theorems for Lipschitz solutions of (\ref{PD-EQ}) for general
boundary values and domains subjects to natural geometric restrictions. 
In  the subsequent articles \cite{IVO-LIN-TRU}  and 
\cite{LIN-TRU} Ivochkina, Lin and Trudinger extended  the approach used by
Ivochkina in \cite{IVO2} to the cases of quotients,
thereby obtaining globally smooth solutions.
Their approach makes use of highly specific properties 
of these particular curvature functions. Finally, Sheng, Urbas and Wang \cite{SHENG-URBAS-WANG}
derive an interior curvature bound for solutions of (\ref{PD-EQ}) which permits
to improve the existence results of Trundinger to yield locally smooth solutions.

For domains in a general Riemannian manifolds, the cases of mean, Gauss and harmonic 
curvatures have been extensively studied, as can be seen in 
\cite{ALIAS-DAJCZER}, \cite{ATALLAH-ZUILY},
\cite{DHL}, \cite{GUAN-3}, \cite{GUAN-LI}, \cite{GUAN-SPRUCK-1}, \cite{HRS}, 
\cite{IVO-TOMI}, \cite{THOMAS1}, \cite{THOMAS2} and \cite{SPRUCK}. 
Nevertheless, to the best of our knowledge there are no results on the existence of solutions for
(\ref{PD-EQ}) when $f$ is a general curvature function  (even for the case of higher order mean curvature) 
and $\Omega$ is a domain in a Riemannian manifold.

In this paper we deal with the Dirichlet problem (\ref{PD-EQ}) for general curvature functions
in the general setting of domains 
$\Omega$ in a smooth Riemannian manifold $(M^n, \sigma)$ under natural geometric conditions. 
We also derive a new boundary second derivative estimate which allows us to
improve the existence results in \cite{SHENG-URBAS-WANG}, yielding globally smooth solutions.
This is more precisely stated in what follows.

As in \cite{CNSV} and \cite{SHENG-URBAS-WANG}, we assume that $f\in C^2(\Gamma)\cap C^0(\overline
\Gamma)$ is a symmetric function defined in 
an open, convex, symmetric cone $\Gamma 
\subset\mathbb{R}^n$ with vertex at the origin and containing the positive cone 
$\Gamma^+=\{\kappa\in\mathbb{R}^n : \, \textrm{each component} \, \kappa_i>0\}$.
We suppose that $f$  satisfies the fundamental structure conditions
\begin{equation}
\label{elipticidade} f_i=\frac{\partial f}{\partial \kappa_i}>0
\end{equation}
and
\begin{equation}
\label{concavidade}
f \textrm{ is a concave function}.
\end{equation}
In addition, $f$ is assumed to satisfy the following more technical assumptions
\begin{align}
\label{soma} \sum f_i(\kappa)&\geq c_0>0 \\ 
\label{relacao_euler}\sum  f_i(\kappa)\kappa_i&\geq c_0>0 \\ 
\label{limite} \limsup_{\kappa\rightarrow\partial \Gamma }f(\kappa) &\leq \bar{\Psi}_0<\Psi_0\\
\label{ki-negativo} f_i(\kappa) &\geq c_0>0 \,\,\textrm{for any} \,\,
\kappa\in\Gamma \, \, \textrm{with} \, \, \kappa_i<0
\\\label{produto} (f_1 \cdots f_n)^{1/n}&\geq c_ 0
\end{align}
for $\kappa\in \Gamma_{\Psi}= \{\kappa\in \Gamma : \, \Psi_0 \leq f(\kappa)\leq \Psi_1 \} $ and 
a constant  $c_0$ depending on $ \Psi_0 $ and $\Psi_1,$ where  $\Psi_0=\inf\Psi$ and 
$\Psi_1=\sup \Psi.$  In this context, a function $u\in C^2(\overline\Omega)$ is 
called {\it admissible} if $\kappa [u]\in \Gamma$ at each point of its graph.
We point out that these conditions also appear in \cite{CNSV}, \cite{GUAN-2}
and \cite{TRU1}. It has been shown (see \cite{CNSIII}, \cite{CNSV}, \cite{GUAN-2}, 
\cite{YYLI}, \cite{LIN-TRU-2}  and \cite{TRU1}) that the (root of) higher
order mean curvatures and their quotients satisfy 
(\ref{elipticidade})-(\ref{produto}) on the appropriate cone $\Gamma.$ 

In  \cite{CNSV}  and \cite{GUAN-SPRUCK-2}, the hypotheses on the curvature functions 
include the requeriment that 
for every constant $C>0$ and every compact set $E$ in $\Gamma$ there is a number $R=R(C,E)$ such that
\begin{equation}
\label{INDESEJADA}
f(\kappa_1, \cdots ,\kappa_n+R)\geq C, \hspace*{.5cm} \forall\, \kappa\in E,
\end{equation}
which precludes the important examples of the quotients $f=\left(S_{k,l}\right)^{1/(k-l)}.$ 
More precisely, this condition is used in \cite{CNSV} and \cite{GUAN-SPRUCK-2} 
to estimate the double normal derivative of admissible solutions at the boundary.
In this paper we adapt a technique presented in \cite{TRU1} to cover the cases
where (\ref{INDESEJADA}) does not hold.

As in the papers \cite{CNSV}, \cite{SHENG-URBAS-WANG}, \cite{TRU1} we shall also
need conditions on the boundary $\partial\Omega$ to ensure the attainment of Dirichlet boundary
conditions. We assume that $\Omega$ is a domain with $C^2$ boundary and that the principal curvatures 
$\kappa'=(\kappa'_1,\cdots,\kappa'_{n-1})$ of $\partial\Omega$  satisfy
\begin{equation}
\label{SERRIN_CONDITION}
 f(\kappa'(y),0) \geq \Psi(y , \varphi(y)),\hspace*{.5cm} \forall\, y\in\partial\Omega.
\end{equation}
We note that (\ref{SERRIN_CONDITION}) is the natural extension of the Serrin
condition for the mean curvature case \cite{SERRIN} and it implies that
\begin{equation}
\label{f-CONVEXIDADE}
(\kappa'_1,\cdots,\kappa'_{n-1}, 0)\in \Gamma.
\end{equation}
We can now formulate our main existence theorem for the Dirichlet problem. Let
$(M^n, \sigma), \, n\geq 2,$ be a complete orientable Riemannian manifold and
$\Omega$ a connected bounded domain in $M.$
\begin{theorem}
\label{teorema1}
Let $f\in C^2(\Gamma)\cap C^0(\overline\Gamma)$ be a curvature function satisfying conditions 
(\ref{elipticidade})-(\ref{produto}). Suppose that there exists $0<\alpha<1$ such 
that $\partial\Omega\in C^{4,\alpha}$ and
$\Psi \in C^{2,\alpha}(\overline\Omega\times\mathbb{R}).$  
Moreover, suppose that $\Psi$ satisfies  (\ref{SERRIN_CONDITION}) and 
that $\Psi>0$ and $\Psi_t\geq 0$ on $ 
\overline\Omega\times\mathbb{R} $. If there exists a locally strictly convex $C^2$ function in 
$\overline\Omega$ and there exists an admissible subsolution 
$\underline u\in C^{2}(\overline\Omega)$
of equation (\ref{P}),  then there exists a unique admissible solution 
$u \in C^{4,\alpha}(\overline{\Omega})$ of the Dirichlet problem  (\ref{PD-EQ})
for any given function $\varphi\in C^{4,\alpha}(\overline \Omega).$
\end{theorem}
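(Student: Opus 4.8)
The plan is to solve the Dirichlet problem (\ref{PD-EQ}) by the method of continuity, deforming the prescribed function and boundary data from the admissible subsolution $\underline u$ to $\varphi$ along a path $t\mapsto (\Psi_t,\varphi_t)$, $t\in[0,1]$, with $\Psi_0$ chosen so that $\underline u$ itself solves the problem at $t=0$ (for instance $\Psi_t = t\Psi + (1-t)F[\underline u]$ and $\varphi_t = t\varphi+(1-t)\underline u|_{\partial\Omega}$, all of which retain the structural hypotheses since $F[\underline u]\le \Psi$ makes $\underline u$ a subsolution for each $t$, and the Serrin condition (\ref{SERRIN_CONDITION}) is preserved). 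The openness of the set of $t$ for which a solution exists follows from the implicit function theorem in H\"older spaces, using the ellipticity (\ref{elipticidade}) of $f$ so that the linearized operator is invertible on $C^{4,\alpha}(\overline\Omega)$ with zero boundary data, as in \cite{CNSV} and \cite{GUAN-SPRUCK-2}. Thus everything reduces, as usual, to establishing closedness, i.e. to obtaining a priori estimates in $C^{2,\alpha}(\overline\Omega)$ for admissible solutions, uniform along the path; the Evans--Krylov theory together with the concavity hypothesis (\ref{concavidade}) and Schauder estimates then bootstrap $C^{2,\alpha}$ to $C^{4,\alpha}$, and the comparison principle (valid because $\Psi_t\ge 0$ and $f$ is monotone) gives uniqueness.

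The a priori estimates proceed in the standard hierarchy. First, the $C^0$ bound: the subsolution $\underline u$ gives a lower barrier, and an upper barrier is constructed from the locally strictly convex function on $\overline\Omega$ together with (\ref{limite}) and (\ref{SERRIN_CONDITION}), exactly the role these hypotheses play in \cite{SHENG-URBAS-WANG}. Next, the boundary gradient estimate: here (\ref{SERRIN_CONDITION}) is exactly the Serrin-type condition that allows the construction of upper and lower barriers of the form $\varphi\pm$ (a suitable function of the distance to $\partial\Omega$), using that the graph of $\partial\Omega$-data sits inside the cone by (\ref{f-CONVEXIDADE}). The global gradient estimate then follows from a maximum-principle argument applied to the tangential gradient function along the graph, using (\ref{soma}) and (\ref{relacao_euler}) to control the zeroth-order terms, as in Trudinger \cite{TRU1} and \cite{SHENG-URBAS-WANG}; the presence of ambient curvature of $(M^n,\sigma)$ contributes extra terms that are absorbed using the $C^0$ bound already in hand.

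The main obstacle, and the technical heart of the paper, is the second-derivative estimate, and within it the boundary estimate for the double normal derivative $u_{\nu\nu}$ at $\partial\Omega$. The interior and tangential boundary estimates for $|D^2 u|$ follow from differentiating the equation twice, using concavity (\ref{concavidade}) to discard the bad third-order term and (\ref{soma}), (\ref{ki-negativo}) to control the linearized operator, with the ambient curvature terms handled via the already-established $C^1$ bound; for the mixed tangential-normal derivatives one uses a barrier adapted to the boundary operator. The genuinely delicate point is bounding $u_{\nu\nu}$ without assuming the unpleasant condition (\ref{INDESEJADA}): in \cite{CNSV} one sends $\kappa_n\to+\infty$ and uses (\ref{INDESEJADA}) to force $f$ large, but this fails for the quotients $S_{k,l}^{1/(k-l)}$. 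The plan, following the technique of Trudinger \cite{TRU1} adapted to the Riemannian graph setting, is to argue by contradiction: if $u_{\nu\nu}$ were unbounded along a sequence of boundary points, then after rescaling the largest principal curvature $\kappa_n$ blows up while the remaining ones stay controlled, and one examines the limit of $f$ restricted to the boundary curvature configuration; the hypotheses (\ref{produto}) and (\ref{ki-negativo}), together with the strict inequality in the Serrin condition (\ref{SERRIN_CONDITION}) on compact pieces and the existence of the subsolution $\underline u$ (which provides the needed strict admissibility and a lower bound pinning $\kappa_n$ from below via a tangent-plane argument at the boundary), force a contradiction. This is where the new boundary estimate announced in the introduction enters, and it is what allows the extension beyond \cite{SHENG-URBAS-WANG} to globally smooth solutions for general curvature functions including the quotients. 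Once $|D^2u|\le C$ on $\overline\Omega$ is established, the principal curvatures $\kappa[u]$ lie in a fixed compact subset of $\Gamma$ (using (\ref{limite}) to keep them away from $\partial\Gamma$), the equation is uniformly elliptic on the solution, Evans--Krylov gives $C^{2,\alpha}$, and the continuity method closes.
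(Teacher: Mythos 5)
Your outline reproduces the standard skeleton (continuity method, $C^0$--$C^1$--$C^2$ estimates, Evans--Krylov, Schauder, comparison principle), but at the step you yourself identify as the heart of the matter --- the boundary second derivative estimates --- the proposal contains no working mechanism, and the mechanism you do sketch would fail. The paper does not bound $u_{\nu;\nu}$ by a rescaling/contradiction argument. It adapts \cite{CNSIII}, \cite{GUAN-2} via Trudinger's device from \cite{TRU-2}: one introduces the tensor $T_u=\tilde\nabla^2\varphi-u_\nu\Pi$ on $\partial\Omega$, whose eigenvalues $\tilde\kappa$ encode the tangential second derivatives, and minimizes over $\partial\Omega$ the distance of $\tilde\kappa[u]$ to $\partial\Gamma'$ ($\Gamma'$ the projected cone). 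At the minimum point one constructs the explicit function $\mu$ of (\ref{CARA-MU}) with $u_\nu\le\mu$ nearby and equality there, and runs the mixed-derivative barrier argument to get $u_{\nu;\nu}\le C$ at that single point (Lemma \ref{LONGE-BORDO}); condition (\ref{limite}) then converts this into a uniform positive lower bound for the distance of the tangential curvatures to $\partial\Gamma'$ at \emph{every} boundary point, and only because of that uniform bound does monotonicity (\ref{elipticidade}) yield (\ref{f-infty}) and hence a bound on $\kappa_n$ everywhere. Without this uniform interior-cone bound, your contradiction argument breaks exactly for the quotients: if $\kappa'[u]$ is allowed to drift to $\partial\Gamma'$, then $\lim_{t\to\infty}f(\kappa',t)$ need not exceed $\Psi$, which is precisely why \cite{CNSV} had to assume (\ref{INDESEJADA}). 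Likewise the mixed tangential-normal bound is not a routine barrier: it rests on the new function $w$ of (\ref{CARA-W}), the inequality of Proposition \ref{DEF-W}, and crucially Lemma \ref{DEF-V}, where hypothesis (\ref{produto}) and an admissible near-boundary subsolution give $L[v]\le-\varepsilon(1+G^{ij}\sigma_{ij})$; and since in Theorem \ref{teorema1} the subsolution $\underline u$ neither matches $\varphi$ on $\partial\Omega$ nor is assumed convex near $\partial\Omega$, it must be replaced near the boundary by the barrier $w=\varphi-f(d)$ of Remark \ref{remark-1}, built from the Serrin condition and (\ref{f-CONVEXIDADE}); your appeal to ``a tangent-plane argument with $\underline u$'' does not address this.

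Two further points. First, your continuity path $\Psi_t=t\Psi+(1-t)F[\underline u]$, $\varphi_t=t\varphi+(1-t)\underline u|_{\partial\Omega}$: the assertion that the Serrin condition is preserved is unjustified --- nothing prevents $F[\underline u]$ from exceeding $f(\kappa'(y),0)$ on $\partial\Omega$, in which case the boundary gradient estimate fails for intermediate $t$; moreover $F[\underline u]$ is only continuous, so the right-hand side leaves the H\"older framework. The paper instead deforms to $F[\chi_0]$, with $\chi_0$ a small multiple (suitably normalized) of the strictly convex function satisfying $0<F[\chi_0]\le\Psi$, which keeps both the subsolution structure and the Serrin-type inequality. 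Second, the locally strictly convex function $\chi$ is not needed for the height bound (the paper's upper barrier is the minimal graph with data $\varphi$, available because (\ref{f-CONVEXIDADE}) forces $H_{\partial\Omega}\ge0$, combined with (\ref{gamma-longe-zero})); its essential role is in the global curvature estimate of Proposition \ref{estimativa-hessiano}, where the quantity $b(\xi,\xi)\exp(\phi(\tau)+\beta\chi)$ on the unit tangent bundle, the support function $\tau$, Simons' identity and a two-case analysis in the principal curvatures are what close the argument on a Riemannian ambient; ``differentiate twice and use concavity'' does not by itself produce (\ref{eq-estimativa-hessiano}).
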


The assumption on the existence of a strictly convex function in 
$C^{2}(\overline\Omega )$ arises naturally when the problem is treated in a 
general Riemannian manifold, as could be seen for instance in \cite{GERHARDT} and \cite{GUAN-2}.
Notice that when $M$ is the Euclidean space this condition is always satisfied.
Theorem \ref{teorema1} extends the result of Caffarelli, Nirenberg and Spruck 
presented in \cite{CNSV} to non-convex domains and general boundary values, without the
assumption (\ref{INDESEJADA}) and also improves the existence results of \cite{SHENG-URBAS-WANG}
and \cite{TRU1}  to yield globally smooth solutions for general 
boundary values.

The cases $f=\left(S_{n,l}\right)^{1/(n-l)}, \, l=1,\ldots, n-1,$ are omitted
from Theorem \ref{teorema1}, as the corresponding extension of the
Serrin condition (\ref{SERRIN_CONDITION}) would imply $\Psi=0$ on $\partial\Omega,$
contradicting the hypothesis on the positivity of $\Psi.$ 
However these cases are covered by Theorem \ref{teorema2}
that is presented below (see Remark \ref{quotient-rem} in Section \ref{section5}).

It is well known that conditions on the geometry of 
the boundary $\partial \Omega$ play a key role in 
the study of the solvability of the Dirichlet problem 
(\ref{PD-EQ}). Nevertheless, several authors (e.g., \cite{GUAN-1}, \cite{GUAN-3},
\cite{GUAN-2}, \cite{GUAN-LI}, \cite{GUAN-SPRUCK-1} and \cite{GUAN-SPRUCK-2}) 
had  replaced  geometric conditions 
on the boundary by assumptions on the 
existence of a subsolution satisfying the boundary condition.
For more details we refer the reader to \cite{CNSIII} and \cite{GUAN-2},  
where is shown the existence of a close relationship 
between the convexity of the boundary and the existence of such subsolutions. 
Therefore, it is natural to consider a version of Theorem \ref{teorema1} obtained
replacing the assumption on the geometry of the boundary $\partial\Omega$
by the assumption on the existence of a subsolution satisfying the boundary condition.
In this context we obtain the following result.
\begin{theorem}
\label{teorema2}
Suppose  that
$f\in C^2(\Gamma)\cap C^0(\overline\Gamma)$ 
satisfy (\ref{elipticidade})-(\ref{produto}) and that for some $0<\alpha<1$ it holds that
\begin{quotation}
\noindent (i) $\partial\Omega\in C^{4,\alpha}$ has 
nonnegative mean curvature;\\
(ii) there exists an admissible subsolution $\underline u \in C^{2}(\overline{\Omega})$ 
of equation (\ref{P}) such that $\underline u=\varphi$ on $\partial\Omega$ 
and $\underline u$ is locally strictly 
convex (up to the boundary) in a neighborhood of $\partial\Omega;$\\  
\noindent (iii) $\Psi\in C^{2,\alpha}(\overline\Omega\times\mathbb{R}),$ 
 $\Psi>0$ and $\Psi_t\geq 0$ on $ 
\overline\Omega\times\mathbb{R};$\\
(iv) there exists a locally strictly convex $C^2$ function in $\overline\Omega.$ 
\end{quotation} 
Then there exists a unique admissible solution $u \in C^{4,\alpha}(\overline{\Omega})$ of the 
Dirichlet problem (\ref{PD-EQ}) for any given function $\varphi\in C^{4,\alpha}(\overline{\Omega}).$
\end{theorem}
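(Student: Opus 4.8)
The plan is to follow the classical continuity method for fully nonlinear elliptic equations, reducing the existence of a smooth solution to a priori estimates up to order $C^{2,\alpha}(\overline\Omega)$ for admissible solutions. Concretely, I would embed the problem in a one-parameter family $F[u^t]=\Psi^t$ with $u^0=\underline u$ (or a convenient modification of it) and $u^1$ the desired solution, chosen so that every $u^t$ satisfies the Serrin-type boundary condition and so that the structural hypotheses persist along the path; openness of the set of $t$ for which a solution exists follows from the implicit function theorem using ellipticity \eqref{elipticidade}, and closedness follows from the a priori estimates together with Evans--Krylov and Schauder theory. Uniqueness is immediate from the comparison principle, which holds because $F$ is elliptic on admissible functions and $\Psi_t\geq 0$.

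The heart of the matter is therefore the a priori estimate chain: (1) the $C^0$ bound, obtained by trapping $u$ between the subsolution $\underline u$ and a suitable supersolution (here the hypothesis $\Psi_t\geq 0$ and the existence of the locally strictly convex function are used, exactly as in \cite{GUAN-2}); (2) the boundary gradient estimate, using $\underline u$ as a lower barrier and constructing an upper barrier from the locally strictly convex function together with condition (iii) of the theorem — the role of hypothesis (i), nonnegative mean curvature of $\partial\Omega$, is to make the distance-function barrier work here; (3) the global gradient estimate, via a maximum-principle argument on $|\nabla u|$ exploiting \eqref{soma}, \eqref{relacao_euler} and the concavity \eqref{concavidade}; (4) the second-derivative estimates. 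Step (4) is the main obstacle. One must first bound $\sup_{\partial\Omega}|\nabla^2 u|$, then reduce the interior estimate to the boundary one by the standard device of differentiating the equation twice, using concavity to control the good third-order terms and \eqref{produto} to bound the operator from below. The subtle point at the boundary is the double normal derivative $u_{\nu\nu}$: since we deliberately do not assume the unpleasant condition \eqref{INDESEJADA}, the usual argument of \cite{CNSV} is unavailable, and instead we adapt Trudinger's technique from \cite{TRU1}, combined with our new boundary estimate, to bound $u_{\nu\nu}$ — here \eqref{limite} is essential, as it prevents the principal curvatures from escaping to $\partial\Gamma$, and \eqref{ki-negativo} controls the contribution of any negative principal curvature. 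Hypothesis (ii), that $\underline u=\varphi$ on $\partial\Omega$ and $\underline u$ is locally strictly convex near $\partial\Omega$, is what replaces the global geometric convexity assumption of Theorem \ref{teorema1} in establishing the tangential and mixed second-derivative bounds at the boundary.

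Once all the estimates are in place one has $\|u^t\|_{C^{2,\alpha}(\overline\Omega)}\leq C$ uniformly in $t$, with constants depending only on the data in the hypotheses; the Evans--Krylov interior estimate together with Krylov's boundary version upgrades this to a uniform $C^{2,\alpha}$ bound, Schauder theory then gives $C^{4,\alpha}$, and the method of continuity closes. The only point where Theorem \ref{teorema2} genuinely differs in execution from Theorem \ref{teorema1} is in steps (2) and (4): there the geometric hypotheses on $\partial\Omega$ and $\varphi$ are traded for the barrier properties of the admissible subsolution $\underline u$ agreeing with $\varphi$ on the boundary, exactly in the spirit of \cite{GUAN-2} and \cite{GUAN-SPRUCK-2}. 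I expect the verification that the adapted Trudinger argument for $u_{\nu\nu}$ interacts correctly with the subsolution-based barriers — rather than with a globally convex domain — to be the most delicate part of the whole proof.
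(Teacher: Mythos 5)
Your overall strategy---continuity method reduced to the chain of a priori estimates, with the subsolution agreeing with $\varphi$ on $\partial\Omega$ replacing the geometric hypotheses of Theorem \ref{teorema1} in the boundary second-derivative estimates, and a Trudinger-type argument for the double normal derivative that avoids (\ref{INDESEJADA})---is exactly the paper's route. There is, however, a concrete gap in your steps (1)--(2). The upper height bound and the boundary gradient estimate cannot be obtained from ``an upper barrier constructed from the locally strictly convex function together with condition (iii)'', nor does hypothesis (i) serve to ``make the distance-function barrier work''. A locally strictly convex function $\chi$ has $\kappa[\chi]\in\Gamma^{+}$, hence $F[\chi]>0$: it is a subsolution-type object, the comparison inequality runs the wrong way, and nothing in the hypotheses places it above $u$ or makes $F[\chi]\leq\Psi$. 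Moreover, the distance-function barrier $w=\varphi-f(d)$ of Section \ref{section3} is a \emph{lower} barrier whose construction requires the Serrin condition (\ref{SERRIN_CONDITION}), which is precisely what is \emph{not} assumed in Theorem \ref{teorema2}; so as written your step (2) has no working upper barrier.

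What hypothesis (i) actually buys is the solvability of the auxiliary problem (\ref{PD-mean-curvature}): nonnegative mean curvature of $\partial\Omega$ is Serrin's condition for the minimal surface equation, so there exists $\bar u$ with $Q[\bar u]=0$ in $\Omega$ and $\bar u=\varphi$ on $\partial\Omega$ (Theorem 1.5 in \cite{SPRUCK}). Since conditions (\ref{soma})--(\ref{limite}) force $\sum_i\kappa_i[u]\geq\delta>0$ for any admissible solution (inequality (\ref{gamma-longe-zero})), the quasilinear comparison principle gives $u\leq\bar u$, while $\underline u\leq u$ follows from the comparison principle for $F$; as $\underline u=u=\bar u=\varphi$ on $\partial\Omega$, the sandwich $\underline u\leq u\leq\bar u$ yields both the height bound and the boundary gradient bound. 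With this correction the remainder of your plan---the global gradient estimate (where, incidentally, the structural condition really used is (\ref{ki-negativo}) rather than concavity alone), the mixed and double-normal boundary estimates built on the barrier involving $u-\underline u$ and condition (\ref{produto}), the global curvature bound using the convex function $\chi$, Evans--Krylov, Schauder, and the continuity path (which the paper starts from a multiple $\chi_0$ of $\chi$ rather than from $\underline u$)---coincides with the paper's proof.
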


Theorem \ref{teorema2} extends to domains in a general 
Riemannian manifold the result obtained by 
Guan and Spruck in \cite{GUAN-SPRUCK-2}.
In addition, Theorem \ref{teorema2} embraces 
a class of curvature functions larger than the one considered
in \cite{GUAN-SPRUCK-2}, including the higher order mean curvatures and their quotients and,
more generally, curvature functions that are defined in a general 
cone $\Gamma,$ not necessarily  being the positive cone $\Gamma^+.$
We point out that the requirement on the mean curvature of $\partial\Omega$ in (i) 
is necessary because of the generality of the cone $\Gamma$ allowed in the Theorem \ref{teorema2}. 
This kind of assumption was already made before in earlier contributions to the subject, 
see for instance  \cite{GUAN-SPRUCK-4}.

Following \cite{CNSV}, \cite{GT}, \cite{GUAN-SPRUCK-2}, \cite{IVO1} and 
\cite{IVO-LIN-TRU}, the proofs of the above existence theorems utilize the method of continuity which 
reduces the problem of the existence 
to the establishment of {\it a priori} estimates for a related family of Dirichlet problems 
in the H\"{o}lder space
$C^{2,\beta}(\overline{\Omega})$ for some $\beta>0.$ 
Here we will establish $C^2$ {\it a priori} estimates. 
H\"{o}lder bounds for the second order derivatives then follows 
from the Evans-Krylov theory (see for example \cite{GT} and \cite{KRYLOV}) while higher 
order estimates follows from the classical Schauder theory. The uniqueness in 
Theorems \ref{teorema1} and \ref{teorema2} is a consequence of the comparison principle.
As is pointed out in \cite{CNSV}, \cite{IVO-LIN-TRU}, \cite{IVO-TOMI} 
and \cite{LIN-TRU}, the crucial estimates are those of the second derivatives
on the boundary $\partial\Omega.$ 
In this work, the use of a new barrier allows us to obtain
estimates for mixed tangential normal derivatives at the boundary for
solutions of (\ref{PD-EQ}) for general curvature equations, which is new even in the Euclidean case.
This is one of the main achievements of this work.
We establish the double normal second derivative estimates at the boundary 
by extending the techniques of \cite{CNSIII},  \cite{GUAN-2} and \cite{TRU-2}
to equations of curvature type.

This article is organized as follows:  In Section \ref{section2} we list some basic formulas 
which are needed later. In Sections \ref{section3}-\ref{section6} we deal with the {\it a priori} estimates 
for prospective solutions of (\ref{PD-EQ}). The height and boundary gradient estimates 
are derived in Section \ref{section3}. Section \ref{section4} is devoted to the proof of 
the global gradient estimates. 
In Section \ref{section5} {\it a priori}
bounds for the second order derivatives on the boundary  are established while
in Section \ref{section6} we show how to estimate the second derivatives of solutions 
given boundary estimates for them.
Finally, in Section \ref{section7} we complete the proofs of Theorems \ref{teorema1} and 
\ref{teorema2} using the continuity method based on the previously
established estimates.

Finally we would like to mention that quite recently Profs. H. Blaine Lawson Jr.  and F. Reese Harvey have developed a very powerful new method for handling with geometric fully nonlinear equations of the type we are considering here.  To adapt the ideas presented in, e.g.,  \cite{HL} is without doubt a very promising way to attack prescribed curvature problem  (\ref{PD-EQ}) in its full generality.


\section{Preliminaries}
\label{section2}

Let $(M^n,\sigma)$ be a complete Riemannian manifold.
We consider the product  manifold $\bar M = M \times 
\mathbb{R}$ endowed with the product metric. The  Riemannian connections 
in $\bar M$ and $M$ will be denoted
respectively by $\bar\nabla$ and $\nabla$. The curvature tensors in
$\bar M$ and $ M$ will be represented by $\bar R$ and $R$,
respectively. The convention used here for the curvature tensor is
\[
R(U,V)W=\nabla_V\nabla_U W-\nabla_U\nabla_V W+\nabla_{[U,V]}W.
\]
In terms of a coordinate system $(x^i)$ we write
\[
R_{ijkl}=\sigma\left(R\left(\frac{\partial}{\partial x^i}, 
\frac{\partial}{\partial x^j} \right) \frac{\partial}{\partial x^k} , 
\frac{\partial}{\partial x^l}   \right).
\]
With this convention, the Ricci identity for the derivatives of a smooth 
function $u$ is given by
\begin{equation}
\label{ricci}
u_{i;jk}=u_{i;kj}+R_{ilkj}u^l.
\end{equation}

Let $\Omega$ be a bounded domain in $M.$
Given a differentiable function $u:\Omega\to \mathbb{R},$ its graph is defined as
the hypersurface $\Sigma$ parameterized by $Y(x)=(x,u(x))$ with
$x\in \Omega$. This graph is diffeomorphic with $\Omega$ and may be globally
oriented by an unit normal vector field $N$ for which it holds that
$\langle N,\partial_t\rangle >0$, where $\partial_t$ denotes the usual coordinate vector 
field in $\mathbb{R}$. With respect to this orientation,
the second fundamental form in $\Sigma$ is by definition the
symmetric tensor field $b=-\langle d N, d X\rangle$. We will
denote by $\nabla'$ the connection of $\Sigma.$

The unit vector field
\begin{equation}\label{normalvect1}
N = \frac{1}{W}\big( \partial_t-\nabla u \big)
\end{equation}
is normal to $\Sigma$, where
\begin{equation} \label{wz}
W = \sqrt{1 + |\nabla u|^2}.
\end{equation}
Here, $|\nabla u|^2=u^iu_i$ is the squared norm of $\nabla u$. 
The induced metric in  $\Sigma$ has components
\begin{equation}
\label{gij1} g_{ij} = \langle Y_i, Y_j\rangle = \sigma_{ij} +
u_iu_j
\end{equation}
and its inverse has components given by
\begin{equation}
\label{gij2} g^{ij}=\sigma^{ij}-\frac{1}{W^2}u^iu^j.
\end{equation}
We easily verify that the components $(a_{ij})$ of the second fundamental form of
$\Sigma$ are  determined by
\begin{eqnarray*}
\label{bij1} a_{ij} =\langle \bar\nabla_{Y_j}Y_i,N\rangle=
\frac{1}{W}u_{i;j}
\end{eqnarray*}
where $u_{i;j}$ are the components of the Hessian $\nabla^2u$ of $u$ in $\Omega.$
Therefore the components $a_i^j$ of the Weingarten map $A^{\Sigma}$ of
the graph $\Sigma$ are given by
\begin{equation}
\label{A-I-J}
a_i^j= g^{jk}a_{ki}= \frac{1}{W}\left( \sigma^{jk}-\frac{1}{W^2}u^ju^k\right)
u_{k;i}.
\end{equation}
Above and throughout the text we made use of the  Einstein summation convention.

For our purposes it is crucial to know the rules of commutation involving the covariant derivatives,
the second fundamental form of a hypersurface and the curvature of the ambient. In this sense,
the Gauss and Codazzi equations will play a fundamental role. They are, respectively,
\begin{align}
\label{GAUSS}
R'_{ijkl}&=\bar{R}_{ijkl}+a_{ik}a_{jl}-a_{il}a_{jk}
\\
\label{CODAZZI}
a_{ij;k}&=a_{ik;j}+\bar{R}_{i0jk}
\end{align}
where the index $0$ indicates coordinate components of the normal vector $N$ and $R'$ is the 
Riemann tensor of $\Sigma.$ We note that $a_{ij;k}$ indicates the 
componentes of the tensor $\nabla' b,$ obtained
by differentiating covariantly the second fundamental form $b$
of $\Sigma$ with respect to the metric $g.$
The following identity for commuting  second derivatives of the
second fundamental form will be quite useful. It was first found by Simons
in \cite{SIMONS} and in our notation it assumes the form
\begin{align}
\begin{split}
\label{SIMONS-FORMULA}
a_{ij;kl}= \, & a_{kl;ji}+a_{kl}a_i^ma_{jm}-a_{ik}a_j^ma_{lm} +a_{lj}a_i^ma_{km}-a_{ij}a_l^ma_{km}
 \\&+\bar{R}_{likm}a^m_j +\bar{R}_{lijm}a_k^m-\bar{R}_{mjik}a_l^m -\bar{R}_{0i0j}a_{kl}
 +\bar{R}_{0l0k}a_{ij}\\&-\bar{R}_{mkjl}a_i^m-\bar\nabla_l\bar{R}_{0jik}-\bar\nabla_i\bar{R}_{0kjl}.
\end{split}
\end{align}

Let $\mathcal{S}$ be the space of all symmetric covariant tensors of rank two defined in 
the Riemannian manifold $(\Sigma, g)$  and $\mathcal{S}_{\Gamma}$ 
be the open subset of those symmetric tensors $a\in\mathcal S$ 
for which the eigenvalues (with respect to the metric $g)$ are
contained in $\Gamma.$ Then we can define the mapping
$F: \mathcal{S}_\Gamma\longrightarrow\mathbb R$ by setting
\[
F(a)=f\big(\lambda(a)\big),
\]
where $\lambda(a)=(\lambda_1, \cdots, \lambda_n)$ are the eigenvalues of $a.$
The mapping $F$ is as smooth as $f.$ 
Furthermore, $F$ can be viewed as depending solely on the mixed tensor $a^{\sharp},$ 
obtained by raising one index of the given symmetric covariant $2$-tensor $a,$ 
as well as depending on the pair of covariant tensors $(a, g),$ 
\[
F(a^\sharp)= F(a,g).
\]
In terms of components, in an arbitrary coordinate system we have
\[
F\big(a_i^j\big)=F\big(a_{ij}, g_{ij}\big)
\]
with $a_i^j=g^{jk}a_{ki}.$ We denote the first derivatives of $F$ by
\[
F^{ij}=\frac{\partial F}{\partial a_{ij}} \hspace*{.5cm} \textrm{and}
\hspace*{.5cm} F_i^j=\frac{\partial F}{\partial a_j^i},
\]
and the second derivatives are indicated by 
\[
F^{ij,kl}=\frac{\partial^2 F}{\partial a_{ij}\partial a_{kl}}.
\]
Hence $F^{ij}$ are the components of a symmetric covariant tensor,
while $F_i^j$ defines a mixed tensor which is contravariant with respect
to the index $j$ and covariant with respect to the index $i.$

As in \cite{IVO2}, we extend the cone $\Gamma$ to the space of symmetric matrices 
of order $n,$ which we denote (also) by $\mathcal{S}$. Namely, for $
p\in \mathbb R^n$, let us define
\[
\Gamma(p)=\{r\in \mathcal{S}\,: \, \lambda(p,r)\in \Gamma\},
\] 
where $\lambda(p,r)$ denotes the eigenvalues of the matrix 
$A(p,r)=g^{-1}(p)r$ given by
\begin{align}
\label{matriz-A}
A(p,r)=\frac{1}{\sqrt{1+|p|^2}}\left(I-\frac{p\otimes p}{1+|p|^2}\right)r,
\end{align}
(the eigenvalues computed with respect to the Euclidean inner product). 
$A(p,r)$ is obtained from the matrix of the Weingarten map
with $(p,r)$ in place of $(\nabla u, \nabla^2 u)$ and $\delta^{ij}$
in place of $\sigma^{ij}.$ We note that the eigenvalues of $A(p,r)$
are the eigenvalues of $r$ (unless the $1/\sqrt{1+|p|^2}$ factor)
with respect to the inner product given by
the matrix $g=I+p\otimes p.$ In this setting it
is convenient to introduce the notation 
\[
G(p,r)=F\big(A(p,r)\big)= f\big(\lambda(p,r)\big).
\]
Hence,  as in \cite{CNSV} and \cite{GUAN-SPRUCK-2} we may write 
equation (\ref{P}) in the form
\begin{equation}
\label{P2-1}
F[u]=f(\kappa[u])=G(\nabla u , \nabla^2 u)=\Psi(x,u).
\end{equation}
In particular, if we denote
\[
G^{ij}=\frac{\partial G}{\partial r_{ij}} \hspace*{.5cm} \textrm{and}
\hspace*{.5cm} G^{ij,kl}=\frac{\partial^2 G}{\partial r_{ij}\partial r_{kl}},
\]
we obtain
\[
G^{ij} =\frac{1}{W}F^{ij}
\hspace*{.5cm} \textrm{and}
\hspace*{.5cm}
G^{ij,kl}= \frac{1}{W^2}F^{ij,kl}.
\]
The derivatives of the mapping $F$ may be easily computed if we assume that 
the matrix $\big(a_{ij}\big)$ is diagonal with respect to the 
metric $\big ( g_{ij}\big),$ as is shown in the following lemma.

\begin{lemma}
\label{derivada-general-f} Let 
$\{e_i\}_{i=1}^n$  be a local 
orthonormal (with respect to the metric $(g_{ij})$ in $\Sigma$) basis of eigenvectors
for  $a\in\mathcal S_\Gamma$  with corresponding eigenvalues $\lambda_i.$ Then, in terms of this basis
the matrix $(F^{ij})$ is also diagonal with eigenvalues $f_i= \frac{\partial f}
{\partial\lambda_i}$.
Moreover, $F$ is concave and its second derivatives are given by
\begin{equation}
F^{ij,kl}\eta_{ij}\eta_{kl}=\sum_{k,l}f_{kl}\eta_{kk}\eta_{ll}+\sum_{k\neq
l} \frac{f_k-f_l}{\lambda_k-\lambda_l}\eta_{kl}^2,
\end{equation}
for any $(\eta_{ij})\in\mathcal S.$ Finally we have
\begin{equation}
\frac{f_i-f_j}{\lambda_i-\lambda_j}\leq 0.
\end{equation}
These expressions must be interpreted as limits in the case of
principal curvatures with multiplicity greater than one.
\end{lemma}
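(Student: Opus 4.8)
The plan is to reduce everything to the classical spectral description of functions of a symmetric matrix. Since $F(a)=f(\lambda(a))$ with $f$ symmetric, standard results on symmetric functions of eigenvalues apply once we work at a point where $a$ is diagonalized with respect to $g$. First I would fix the orthonormal eigenbasis $\{e_i\}$ for $a$ with eigenvalues $\lambda_i$, so that in these coordinates $g_{ij}=\delta_{ij}$, $a_{ij}=\lambda_i\delta_{ij}$, and $a_i^j=\lambda_i\delta_i^j$. The key elementary fact is the variation of eigenvalues: perturbing $a_{ij}\mapsto a_{ij}+t\eta_{ij}$, first-order perturbation theory gives $\frac{\partial \lambda_k}{\partial a_{ij}}=\delta_{ki}\delta_{kj}$ at the diagonal point, whence by the chain rule $F^{ij}=\sum_k f_k\,\frac{\partial\lambda_k}{\partial a_{ij}}=f_i\delta^{ij}$, i.e.\ $(F^{ij})$ is diagonal with entries $f_i=\partial f/\partial\lambda_i$. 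This is the first assertion.

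For the second derivatives I would differentiate once more, which requires the \emph{second-order} perturbation of the eigenvalues and, crucially, the first-order perturbation of the eigenvectors. The off-diagonal eigenvector response produces, via the chain rule applied to $F(a)=f(\lambda(a))$ thought of as a function on all of $\mathcal{S}$, exactly the two terms in the claimed formula: the ``diagonal'' Hessian part $\sum_{k,l}f_{kl}\eta_{kk}\eta_{ll}$ coming from $\partial^2 f/\partial\lambda_k\partial\lambda_l$ composed with the first-order eigenvalue variations, and the ``divided difference'' part $\sum_{k\neq l}\frac{f_k-f_l}{\lambda_k-\lambda_l}\eta_{kl}^2$ coming from the interaction of the eigenvalue gradient $f_k$ with the second-order change in $\lambda_k$ induced by rotation of eigenvectors. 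I would cite the standard computation (as in Ball, or the curvature-equation literature such as \cite{CNSV}) rather than rederive it in full, but indicate the mechanism. When $\lambda_k=\lambda_l$ the quotient is read as $f_{kk}-f_{kl}$ by L'Hôpital, which is the ``limits'' caveat in the statement; one checks this is consistent because $f$ symmetric forces $f_k=f_l$ along the diagonal $\lambda_k=\lambda_l$, so the numerator vanishes to the same order.

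Concavity of $F$ then follows by showing the quadratic form $F^{ij,kl}\eta_{ij}\eta_{kl}\le 0$ for all symmetric $(\eta_{ij})$. The first sum is nonpositive because $f$ is concave (hypothesis \eqref{concavidade}), so its Hessian $(f_{kl})$ is negative semidefinite. For the second sum it suffices to establish the last displayed inequality $\frac{f_i-f_j}{\lambda_i-\lambda_j}\le 0$, i.e.\ that $f_i$ is monotone in the ordering of the $\lambda_i$: if $\lambda_i>\lambda_j$ then $f_i\le f_j$. This is a consequence of concavity together with symmetry of $f$ — the standard argument is to apply concavity of $f$ along the segment between $\kappa$ and its transposition $\tau_{ij}\kappa$ (swapping the $i$ and $j$ entries), which gives $(\lambda_i-\lambda_j)(f_i(\kappa)-f_j(\kappa))\le 0$ after using $f(\tau_{ij}\kappa)=f(\kappa)$ and comparing directional derivatives at the two endpoints. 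Since each $\eta_{kl}^2\ge 0$ and each coefficient $\frac{f_k-f_l}{\lambda_k-\lambda_l}\le 0$, the second sum is nonpositive, so $F$ is concave.

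The main obstacle I anticipate is bookkeeping rather than conceptual: being careful that all identities are asserted \emph{only at the diagonalizing point} (the formulas for $F^{ij}$, $F^{ij,kl}$ are not tensorial identities globally but hold pointwise in the adapted frame, which is all that is needed), and handling the degenerate case of repeated eigenvalues cleanly — one must argue the divided-difference coefficients extend continuously and the whole expression $F^{ij,kl}\eta_{ij}\eta_{kl}$ is well-defined independent of which orthonormal eigenbasis is chosen within an eigenspace. The cleanest route is to first prove everything on the dense open set where all $\lambda_i$ are distinct and then pass to the limit, invoking that $F\in C^2$ (inherited from $f\in C^2$ via a theorem on $C^2$-smoothness of symmetric functions of a symmetric matrix) to guarantee the limiting expressions are the genuine second derivatives.
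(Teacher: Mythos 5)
Your proposal is correct, but note that the paper itself offers no proof of Lemma \ref{derivada-general-f}: it is stated as a known result, of the kind proved in the references the paper relies on (e.g.\ \cite{CNSV}, \cite{GERHARDT}), so there is no ``paper proof'' to compare against. Your outline is the standard spectral-perturbation argument and all the essential points are in place: the first-order eigenvalue variation $\partial\lambda_k/\partial a_{ij}=\delta_{ki}\delta_{kj}$ at a $g$-orthonormal diagonalizing frame gives $F^{ij}=f_i\delta^{ij}$; the second-derivative identity comes from second-order eigenvalue perturbation (eigenvector rotation producing the divided-difference terms), which you cite rather than rederive — acceptable, though a complete write-up would either carry out that computation or point to a precise reference; the inequality $\frac{f_i-f_j}{\lambda_i-\lambda_j}\leq 0$ via concavity plus the symmetry $f(\tau_{ij}\kappa)=f(\kappa)$ is exactly the standard argument and is correct (the gradient form of concavity at $\kappa$ in the direction $\tau_{ij}\kappa-\kappa$ gives $(\lambda_i-\lambda_j)(f_i-f_j)\leq 0$, using convexity and symmetry of $\Gamma$ to keep the segment in the domain); and concavity of $F$ then follows since both sums in the quadratic form are nonpositive, the first by negative semidefiniteness of $(f_{kl})$ from \eqref{concavidade}. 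Your handling of repeated eigenvalues is also right: the limit of the divided difference is $f_{kk}-f_{kl}$ (consistent with concavity, since $f_{kk}-2f_{kl}+f_{ll}\leq 0$ and $f_{kk}=f_{ll}$ on the diagonal), and proving the formula on the dense set of simple eigenvalues and passing to the limit using the $C^2$-regularity of $a\mapsto f(\lambda(a))$ is the clean way to make the statement rigorous; the only ingredient you invoke without proof there is that this regularity is indeed inherited from $f\in C^2$, which is a known but nontrivial fact and should be attributed explicitly.
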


It follows from the above lemma that, under condition (\ref{elipticidade}),
equation (\ref{P2-1}) is elliptic, i.e., the matrix $G^{ij}(p,r)$
is positive-definite for any $r\in\Gamma(p).$ Moreover, under condition (\ref{concavidade})
the restriction of the function $G(p,\cdot)$ to the open set 
$\Gamma(p)$ is a concave function. We point out that since $1/W$ and $1$ are
respectively the lowest and the largest eigenvalues of $g^{ij}$ it holds that
\begin{equation}
\frac{1}{W^3} F_i^j\delta^i_j\leq G^{ij}\delta_{ij}\leq \frac{1}{W}F_i^j\delta_j^i.
\end{equation}

Now we analyze some consequences of the conditions 
(\ref{elipticidade})-(\ref{limite}). First we note that 
the concavity condition implies 
\begin{equation}
\label{upper-f}
\sum_i f_i(\kappa)\kappa_i\leq f
\end{equation}
and we also may prove using assumptions (\ref{soma})-(\ref{limite}) and following \cite{CNSIII}
that
\begin{equation}
\label{gamma-longe-zero}
\sum_i \kappa_i\geq \delta >0,
\end{equation}
for any $\kappa\in \Gamma$ that satisfies $f(\kappa)\geq \Psi_0.$
This geometric fact implies that {\it upper bounds}
for the principal curvatures of the graph of an admissible solution immediately ensure
{\it lower bounds} for these curvatures.

Now we will derive a lemma that gives a 
useful formula involving the second and third derivatives
of prospective solutions to the problem (\ref{PD-EQ}).

\begin{lemma}
\label{lema-formula-psi-k}
Let $u$ be a solution of equation (\ref{P2-1}). The derivatives of 
$u$ satisfy the formula
\begin{align}
\begin{split}
\label{formula-psi-k}
G^{ij}u_{k;ij}=& WG^{ij}a_j^lu_{k;i}u_l+WG^{ij}a_j^lu_{k;l}u_i+
\frac{1}{W}G^{jl}a_{jl} u^i u_{i;k} \\ &-
G^{ij}R_{iljk}u^l+\Psi_k + \Psi_t u_{k}.
\end{split}
\end{align}
\end{lemma}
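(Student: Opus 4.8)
The plan is to differentiate the equation once along a coordinate direction and then rearrange the resulting third-order term. Fix a point $x_0\in\Omega$ and work in $\sigma$-normal coordinates centred at $x_0$, so that at $x_0$ covariant derivatives coincide with ordinary ones and $\sigma_{ij}=\delta_{ij}$; since (\ref{formula-psi-k}) is an identity of tensors in the free index $k$, it suffices to verify it at $x_0$. Writing the equation in the form (\ref{P2-1}) --- equivalently as $F(a^{\sharp})=\Psi(x,u)$, where $a_i^j$ is the Weingarten operator of $\Sigma$, viewed through the graph diffeomorphism as a tensor field on $\Omega$ --- differentiation in the direction $\partial_k$ and the chain rule give
\[
F_j^i\,\nabla_k a_i^j=\Psi_k+\Psi_t u_k .
\]

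Next I would split $\nabla_k a_i^j$ according to the three ways $\nabla u$ enters the expression $a_i^j=\tfrac{1}{W}\big(\sigma^{jm}-W^{-2}u^j u^m\big)u_{m;i}$: through the factor $u_{m;i}$, through $1/W$, and through $g^{jm}=\sigma^{jm}-W^{-2}u^j u^m$. Contracting the first piece, $\tfrac{1}{W}g^{jm}u_{m;ik}$, with $F_j^i$ and using $g^{ij}F_j^k=F^{ik}$ together with $G^{ij}=W^{-1}F^{ij}$ yields $G^{ij}u_{i;jk}$. By the symmetry $u_{i;j}=u_{j;i}$ of the Hessian and the Ricci identity (\ref{ricci}), $G^{ij}u_{i;jk}=G^{ij}u_{k;ij}$ plus a curvature term which, since $G^{ij}$ is symmetric, reduces to the single term $-G^{ij}R_{iljk}u^l$ of (\ref{formula-psi-k}). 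This part is mechanical.

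The substance is to check that the two remaining pieces of $F_j^i\nabla_k a_i^j$ reproduce the lower-order terms
\[
W G^{ij}a_j^l u_{k;i}u_l+W G^{ij}a_j^l u_{k;l}u_i+\tfrac{1}{W}G^{jl}a_{jl}\,u^i u_{i;k}.
\]
From $\nabla_k(1/W)=-W^{-3}u^m u_{m;k}$, the $1/W$-piece contributes, after contraction with $F_j^i$ and transposition to the right-hand side, exactly $\tfrac{1}{W}G^{jl}a_{jl}u^i u_{i;k}$ (here one uses $F_j^i a_i^j=F^{ij}a_{ij}$ and $g^{jm}u_{m;i}=W a_i^j$). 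The $g^{jm}$-piece, using $\nabla_k g^{jm}=-W^{-2}\big(u^j_{;k}u^m+u^ju^m_{;k}\big)+2W^{-4}u^ju^m u^p u_{p;k}$, produces after contraction and regrouping --- together with those portions of the computation in which $g$ rather than $\sigma$ appears, which have to be converted using $g_{ij}u^j=W^2 u_i$ --- the two terms $W G^{ij}a_j^l u_{k;i}u_l$ and $W G^{ij}a_j^l u_{k;l}u_i$. Assembling the pieces and solving for $G^{ij}u_{k;ij}$ gives (\ref{formula-psi-k}); tensoriality in $k$ then makes it valid at every point.

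The main obstacle is this last step: carrying out the differentiation of $A(p,r)$ in (\ref{matriz-A}) --- equivalently of the Weingarten operator --- in the gradient variable, and verifying that after contraction with the derivative tensor of $F$ and repeated conversions between $\sigma$ and $g$ (via $g_{ij}=\sigma_{ij}+u_iu_j$, $g^{ij}=\sigma^{ij}-W^{-2}u^iu^j$, $g_{ij}u^j=W^2u_i$) and the powers of $W$, the lower-order contributions collapse precisely to the three stated terms with the correct coefficients. By contrast, differentiating the PDE and applying the Ricci identity are immediate.
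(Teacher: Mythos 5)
Your proposal is correct and follows essentially the same route as the paper: differentiate the equation in the $k$-th direction, split the gradient-dependence of the Weingarten operator into the $1/W$-piece and the inverse-metric piece (whose contributions are exactly the lower-order terms you state), and finish with the Ricci identity to trade $G^{ij}u_{i;jk}$ for $G^{ij}u_{k;ij}$ plus the curvature term. The only difference is cosmetic: instead of differentiating $g^{jm}=\sigma^{jm}-W^{-2}u^ju^m$ explicitly, the paper obtains the same piece from $g_{sp}\frac{\partial g^{sl}}{\partial u_i}=-\left(\delta_{ip}g^{sl}u_s+g^{il}u_p\right)$, which makes the regrouping you flag as the laborious step nearly immediate.
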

\begin{proof}
Differentiating covariantly equation (\ref{P2-1}) in the $k$-th direction
with respect to the metric $\sigma$ of $M$  we obtain
\begin{align}
\label{CONTA-PSI-K-1}
\Psi_k+\Psi_t u_k  &= \frac{\partial G}{\partial u_{i;j}}u_{i;jk} +
\frac{\partial G}{\partial u_{i}}u_{i;k}= G^{ij}u_{i;jk} +
G^iu_{i;k}.
\end{align}
From $F(a_i^j[u])=G(\nabla u, \nabla^2 u)$ we calculate
\begin{align*}
G^i &=\frac{\partial G}{\partial u_{i}}=\frac{\partial F}{\partial
a_r^s}\frac{\partial a_r^s}{\partial u_{i}}=
F_s^r\frac{\partial }{\partial u_{i}}\left( \frac{1}{W}g^{sl}u_{l;r}\right)
\\ &= F_s^r g^{sl}u_{l;r}\frac{\partial }{\partial u_{i}}\left( \frac{1}
{W}\right)+ \frac{1}{W} F_s^r\frac{\partial }{\partial u_{i}}\left( g^{sl}\right)u_{l;r}.
\end{align*}
We compute
\[
F_s^r g^{sl}u_{l;r}\frac{\partial }{\partial u_{i}}\left( \frac{1}
{W}\right)=-\frac{u^i}{W^3}F_s^r g^{sl}u_{l;r}=-\frac{1}{W} G^{rs}a_{rs}u^i
\]
and
\begin{align*}
\frac{1}{W} F_s^r\frac{\partial }{\partial u_{i}}\left( g^{sl}\right)u_{l;r} 
&= G^{rp}g_{sp}\frac{\partial }{\partial u_{i}}\left( g^{sl}\right)u_{l;r}
\\ &= -WG^{ij}a_j^lu_l-WG^{lj}a_l^iu_j,
\end{align*}
where we have used the expression
\[
g_{sp}\frac{\partial g^{sl}}{\partial u_i}=-g^{sl}
(\delta_{is}u_p+u_s\delta_{ip})=-(\delta_{ip}g^{sl}u_s+g^{il}u_p).
\]
It follows that
\begin{align*}
G^i=-\frac{1}{W} G^{rs}a_{rs}u^i-WG^{ij}a_j^lu_l-WG^{lj}a_l^iu_j.
\end{align*}
Replacing these relations into (\ref{CONTA-PSI-K-1}) we obtain
\begin{align*}
\Psi_k+ \Psi_t u_k = G^{ij}u_{i;jk} -\frac{1}{W} G^{rs}a_{rs}u^iu_{i;k}
-WG^{ij}a_j^lu_lu_{i;k}-WG^{lj}a_l^iu_ju_{i;k}.
\end{align*}
Using the Ricci identity (\ref{ricci}), equation (\ref{formula-psi-k}) is
easily obtained.
\end{proof}

A choice of an appropriate coordinate system simplifies substantially
the computation of the components $a_{i}^j$ of the Weingarten operator.
We describe how to obtain such a coordinate system. Fixed a point 
$x\in M,$ choose a geodesic coordinate system $(x^i)$ of $M$ around 
$x$ such that the coordinate vectors $\{Y_*\cdot \frac{\partial}{\partial x^i}|_x\}_{i=1}^n$
form a basis of principal directions of $\Sigma$ at $Y(x)$ and 
$\{\frac{\partial}{\partial x^i}|_x\}_{i=1}^n$ is an orthonormal basis with respect to 
the inner product given by the matrix $g=I+\nabla u\otimes\nabla u.$
Hence,
\[
a_i^j(x)=a_{ij}(x)=\frac{1}{W}u_{i;j}(x)\delta_{ij}=\kappa_i\delta_i^j 
\]
and
\[
G^{ij}=\frac{1}{W}F_k^ig^{kj}=\frac{1}{W}f_i\delta_k^i\delta^{kj}=\frac{1}{W} f_i\delta_i^j
\]
since $(F_i^j)$ is diagonal whenever $(a_i^j)$ is diagonal and
$g^{ij}=\delta^{ij}$. From now on we refer to such a
coordinate system as the  {\it special} coordinate system centered at $x.$ 

At the center of a {\it special} coordinate system the formula (\ref{formula-psi-k}) 
takes the simpler form
\begin{align}
\begin{split}
\label{formula-psi-k-2}
\sum_i f_iu_{k;ii}&= 2W\sum_i f_i\kappa_i u_iu_{i;k}+\frac{1}{W}\sum_jf_j\kappa_j
u^iu_{k;i} \\& -\sum_if_iR_{ilik}u^l+W(\Psi_{k}+\Psi_{t}u_k).
\end{split}
\end{align}


\section{The Height and Boundary Gradient Estimates}
\label{section3}

In this section we start establishing the {\it a priori} estimates of admissible solutions of the
Dirichlet problem (\ref{PD-EQ}). First
we consider the Theorem \ref{teorema2}. 
In this case the height estimate 
for admissible solutions is a direct consequence
of the existence of a  subsolution $\underline u$ satisfying the boundary
condition and of the inequality (\ref{gamma-longe-zero}).
In fact, it follows from the comparison principle applied to equation
(\ref{PD-EQ}) that $\underline{u}\leq u,$ which yields a lower bound.
An upper bound is obtained using as barrier the solution $\bar u$ 
of the Dirichlet problem
\begin{align}
\label{PD-mean-curvature}
\begin{split}
Q[\bar{u}] & =0  \hspace*{.5cm} \textrm{in}\,\, \Omega
\\ \bar u  & =\varphi \hspace*{.5cm} \textrm{on}\,\, \partial\Omega,
\end{split}
\end{align}
where $Q$ is the mean curvature operator. 
The assumption on the geometry of $\partial\Omega$ 
ensures the existence of such a solution $\overline{u}$ (see Theorem 1.5 in \cite{SPRUCK}).
So, it follows from the comparison principle 
for quasilinear elliptic equations that $u\leq\bar{u}.$  On the other hand,
since $\underline u= u= \bar{u}$ on $\partial \Omega,$ 
the inequality $\underline u\leq u\leq \bar{u}$
implies the boundary gradient estimate
\[
|\nabla u|<C  \hspace*{.5cm}\textrm{on} \,\, \partial \Omega.
\]
Hence the height and the boundary
gradient estimates are established in the case of Theorem \ref{teorema2}.
Now we consider the Theorem \ref{teorema1}. 
First note that the assumption on the existence of a bounded subsolution and the
solvability of (\ref{PD-mean-curvature}) ensures the height estimates.

The boundary gradient estimate is obtained following closely the ideas presented in 
\cite{TRU1}, which make use of the hypotheses 
on the boundary geometry to construct a lower barrier function.
Indeed let $d$ be the distance function to the boundary $\partial\Omega.$ 
In a small tubular neighborhood $\mathcal N=\{ x\in \Omega \, :\, d(x)<a\}$ of $\partial\Omega$
we define the barriers in the form
 \begin{equation}
 \label{lb-w}
 w=\varphi-f(d),
 \end{equation}
where  $f$ is a suitable real function and $a>0$ is a constant  chosen sufficiently
small to ensure that $d\in C^2(\bar{\mathcal{N}})$ (see \cite{LI-NIRENBERG}).
The boundary function $\varphi$ is redefined so that it is constant along normals to
$\partial\Omega$ in $\mathcal N$ and the function $f\in C^2 ([0,a])$ satisfies
$f'>0$ and $f''<0.$ Fixed a point $y_0$ in $\mathcal N,$ we fix around $y_0$
Fermi coordinates $(y^i)$ 
in $M$ along $\mathcal{N}_{d}=\{ x\in \Omega \, :\, d(x)=d(y_0)\},$  such that  
$y^n$ is the normal coordinate and the tangent 
coordinate vectors $\{\frac{\partial}{\partial y^\alpha}|_{y_{0}}\},$  $1\leq\alpha\leq n-1,$ 
form an orthonormal basis of eigenvectors that diagonalize 
$\nabla^2 d$ at $y_0.$ Since  
$\nabla d=\nu$ is the unit normal outward vector along $\mathcal{N}_{d}$ we have
\[
-\nabla^2 d(y_0)=\textrm{diag}(\kappa''_1, \kappa''_2, \ldots, \kappa''_{n-1}, 0),
\]
where $\kappa''=(\kappa''_1, \kappa''_2, \ldots, \kappa''_{n-1})$ denotes the vector of principal 
curvatures of $\mathcal{N}_{d}$ at $y_0.$ At $y_0$ we have
$w_i=\varphi_i$ for $i<n$. Moreover $w_n(y_0)=\varphi_n-f'$ and 
\begin{align}
\label{derivada-2-w}
\nabla^2 w=\nabla^2\varphi+ \textrm{diag}\big(f'\kappa'', -f''\big),
\end{align}
since $d_n=1$ and $d_i=0,$  $i<n.$ Therefore, the matrix of the Weingarten operator
of the graph of $w$ at $\big(y_0, w(y_0)\big)$ is
\begin{align*}
A[w]=&\, \big(g^{ik}(w)a_{jk}(w)\big)= \frac{1}{\sqrt{1+|\nabla w|^2}}\left( \delta^{jk}-\frac{w^jw^k}
{1+|\nabla w|^2}\right)w_{k;j}\\ =&\, O\Big(\frac{1}{v}\Big) + \tilde{A}[w]
\end{align*}
as $v\rightarrow\infty$ (or equivalently $f'\rightarrow\infty$) where $v=\sqrt{1+|\nabla w|^2}$ 
and we have written
\[
\tilde{A}[w]=\frac{1}{v}g^{-1}(w)\textrm{diag}(f'\kappa'', -f'').
\] 
It is convenient to split the computation of the matrix $\tilde{A}[w]$ into some blocks: For $i,j\leq n-1$
the components $\tilde{a}_{ij}$ of  $\tilde{A}[w]$ are
\[
\tilde{a}_{ij}=\frac{1}{v}\left( \delta^{jk}-\frac{\varphi^j\varphi^k}{1+v^2}\right)f'\kappa''_i\delta_{ik}
= \kappa''_i\delta_{ij}+O\Big(\frac{1}{v}\Big), \hspace*{.5cm} \textrm{as}\,\, v\rightarrow\infty.
\]
For $i=n$ and $j<n$ we have
\[
\tilde{a}_{nj}=-\frac{(f')^2\kappa''_j \varphi_j}{v^3}
= O\Big(\frac{1}{v}\Big),\hspace*{.5cm} \textrm{as}\,\, v\rightarrow\infty,
\]
and finally
\[
\tilde{a}_{nn}=-f''\frac{1+|\nabla\varphi|^2}{v^3}.
\]
Now we take $f$ of the form
\[
f(d)=\frac{1}{b}\log(1+cd)
\]
for positive constants $b,c>0$ to be determined. We have
\begin{align}
\label{conta-boundari-gradi-dp}
\begin{split}
f'(d)&=\frac{c}{b(1+cd)}\geq \frac{1}{b(1+ca)}
\\ f''(d)&=-bf'(d)^2,
\end{split}
\end{align}
so
\begin{equation}
\label{a-tilde-nn}
\tilde{a}_{nn}=b(f')^2\frac{1+|\nabla\varphi|^2}{v^3}=
\frac{b}{v}\left(1+O\Big(\frac{1}{v}\Big)\right), \hspace*{.5cm} \textrm{as}\,\, v\rightarrow\infty.
\end{equation}
Hence the principal curvatures $\tilde\kappa=(\tilde{\kappa}_1, \ldots, \tilde{\kappa}_n)$
of the graph of $w$ at $\big(y_0, w(y_0)\big)$ will differ from 
$\kappa''_1, \ldots ,\kappa''_{n-1}, \tilde{a}_{nn}$ by $O\Big(\frac{1}{v}\Big)$ as $v\rightarrow \infty.$
Then it follows from (\ref{a-tilde-nn}) that we may estimate
\[
\tilde{\kappa}_n\geq \frac{b}{2v}
\]
provided $v\geq v_0,$ $b\geq b_0,$ where $b_0$ and $v_0$ are 
constants depending on $|\varphi|_2$ and $\partial\Omega.$ Therefore
\begin{align}
\label{diferenca-kappas}
|\tilde{\kappa}_i-\kappa''_i|\leq \frac{b_1}{b}\tilde{\kappa}_n,
\end{align}
for a futher constant $b_1.$ On the other hand,
if $\tilde{y}_0=\tilde{y}_0(y_0)\in \partial\Omega$ denotes the closest point 
in $\partial\Omega$ to $y_0$ we thus estimate
\begin{align*}
\Psi(y_0, w)& \leq \Psi(\tilde{y}_0, \varphi)+|\Psi|_1 d
\\ & \leq \Psi(\tilde{y}_0, \varphi)+\frac{|\Psi|_1}{b v}
\\ &\leq f(\kappa', 0)+\frac{|\Psi|_1}{b v},
\end{align*}
where we used  (\ref{conta-boundari-gradi-dp}), the Serrin condition (\ref{SERRIN_CONDITION})
and the assumption $\Psi_t\geq 0.$
We recall that $\kappa'$ denotes the principal curvatures of $\partial\Omega.$
For $a>0$ small, we can replace $\kappa''_i$ by $\kappa'_i$ in (\ref{diferenca-kappas}).
On the other hand, using the mean value theorem and conditions (\ref{elipticidade})
and (\ref{f-CONVEXIDADE}) we obtain positive constants $\delta_0,\, t_0$ such that
\begin{align}
\label{seerin-condition-conta}
f(\tilde{\kappa})-f(\kappa', 0) \geq \delta_0t\tilde{\kappa}_n
\end{align}
whenever $t\leq t_0, \, |\tilde{\kappa}_i-\kappa'_i|\leq t\tilde{\kappa}_n, \, 
i=1, \ldots, n-1.$ To apply (\ref{seerin-condition-conta}) we should
observe that (\ref{elipticidade}) and (\ref{SERRIN_CONDITION}) imply $\tilde{\kappa}\in\Gamma.$
Then, to deduce the inequality $F[w]\geq \Psi$ as desired we fix $b$ so that
\[
b\geq b_0, \frac{b_1}{t_0}\quad\textrm{and}
\quad b^2\geq \frac{|\Psi|_1}{\delta_0t_0}.
\]
Setting $M=\sup (\varphi -u)$ we then choose $c$ and $a$ in such a way that
\[
ca=e^{b M}-1\quad\textrm{and}
\quad c\geq v_0b e^{b M}
\]
to ensure $v\geq v_0, \, w\leq u$ on $\partial \mathcal{N}.$ Therefore,
we find that $w$ is a lower barrier, that is,
\begin{align*}
F[w]=f(\tilde{\kappa}[w])&>\Psi \quad\textrm{in}\quad \mathcal{N}
 \\ w&\leq u \quad\textrm{on}\quad  \partial\mathcal{N},
\end{align*}
which implies $u\geq w$ in $\mathcal N.$ Since the condition (\ref{f-CONVEXIDADE}) 
implies that the mean curvature of $\partial\Omega$ is nonnegative,  we can conclude that 
there exists a solution  $\bar u$ of (\ref{PD-mean-curvature}) which is an
upper barrier. This establishes the boundary gradient estimates
in the Theorem \ref{teorema1}.  

\begin{remark}
\label{remark-1}
In the Lemma \ref{DEF-V} below we use again the function $w$ defined in (\ref{lb-w}).
We note that if (\ref{f-CONVEXIDADE}) holds, then for any $p\in\mathbb{R}^n,$  
we can choose the function $w$ as above in
such a way that it satisfies $\nabla^2 w\in \Gamma(p)$ on $\mathcal N.$ 
To see this first note that the matrix $g^{-1}(p)$ has eingenvalues
$1/(1+|p|^2)^{3/2}$ and $1/\sqrt{1+|p|^2}$ with multiplicities $1$ and $n-1,$ respectively.
Then the eigenvalues of the matrix $g^{-1}(p)\nabla^2 f(d)$ are 
\[
\tilde{\kappa}=\frac{1}{\sqrt{1+|p|^2}}\left(\tilde{\kappa}_1, \ldots, 
\frac{\tilde{\kappa}_n}{1+|p|^2}\right),
\]
where $\tilde{\kappa}'=(\tilde{\kappa}_1, \ldots, \tilde{\kappa}_n)$ are the eingenvalues
of $-\nabla^2 f(d).$ On the other hand, choosing $f$ as above we conclude  
$\tilde{\kappa}'=f'(\kappa'_1, \ldots , \kappa'_{n-1}, b f')$ on $\partial\Omega,$
where  $(\kappa'_1, \ldots,\kappa'_{n-1})$ denote the principal curvatures of $\partial\Omega.$
Hence, as the matrix $A(p,\nabla^2 w)$ has the form
\[
A(p,\nabla^2 w)=g^{-1}(p)\nabla^2 w=g^{-1}(p)\nabla^2 \varphi-g^{-1}(p)\nabla^2f(d),
\]
it follows from (\ref{f-CONVEXIDADE}) that for $f'$ sufficiently large (depending on $|p|,$ 
$|\varphi|_2$ and $\partial\Omega$) we have $\nabla^2w\in\Gamma(p)$ on $\partial\Omega.$
Since $\Gamma$ is open, the same holds in a small tubular neighborhood $\mathcal N$ of 
$\partial\Omega.$ It follows also from (\ref{derivada-2-w}) and  (\ref{conta-boundari-gradi-dp})
that the eigenvalues of $\nabla^2 w$ belongs to $\Gamma$ for $f'$ sufficiently large.
\end{remark}


\section{A Priori Gradient Estimates}
\label{section4}

In this section we derive (the interior) {\it a priori} gradient estimates for admissible solutions $u$
of the Dirichlet problem (\ref{PD-EQ}).
\begin{proposition}
\label{estimativa-gradiente} 
Let $u\in C^3(\Omega)\cap C^1(\overline{\Omega})$ be an admissible solution of (\ref{PD-EQ}).
Then, under the conditions (\ref{elipticidade})-(\ref{ki-negativo}),
\begin{equation}
\label{eq-estimativa-gradiente}
|\nabla u|\leq C \hspace*{.5cm}\textrm{in}\,\,\overline{\Omega},
\end{equation}
where $C$ depends on $|u|_0,$ $|\underline{u}|_1$ and other known data.
\end{proposition}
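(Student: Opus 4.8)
The plan is to establish the interior gradient estimate via a maximum-principle argument applied to an auxiliary function of the form $\phi = \log W + \mu \, \underline{u}$ (or $\log W$ twisted by a function of $u$ and of the subsolution), where $W = \sqrt{1 + |\nabla u|^2}$ as in \eqref{wz}. The idea is standard in this circle of ideas (cf. \cite{CNSV}, \cite{SHENG-URBAS-WANG}, \cite{TRU1}): since the boundary gradient estimate $|\nabla u| \le C$ on $\partial\Omega$ has already been secured in Section \ref{section3}, it suffices to control $\sup_\Omega W$ by $\sup_{\partial\Omega} W$ together with known data. So I would suppose that $\phi$ attains an interior maximum at some point $x_0 \in \Omega$; if the max is on $\partial\Omega$ we are already done.

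First I would work at $x_0$ in the special coordinate system centered there (the one constructed after Lemma \ref{lema-formula-psi-k}), so that $(a_i^j) = \mathrm{diag}(\kappa_i)$ and $G^{ij} = \frac{1}{W} f_i \delta_i^j$. Then I would compute $G^{ij}\phi_{;ij}$ and use $\nabla \phi(x_0) = 0$, $(\phi_{;ij}(x_0)) \le 0$. The first-order condition gives an expression for $\nabla W / W$ in terms of $\nabla \underline{u}$ (or the twisting function), which I substitute back. The key input is the differentiated equation: Lemma \ref{lema-formula-psi-k}, and more precisely its special-coordinate form \eqref{formula-psi-k-2}, expresses $\sum_i f_i u_{k;ii}$ in terms of curvature terms, the ambient Riemann tensor, and $\Psi_k + \Psi_t u_k$. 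Contracting \eqref{formula-psi-k-2} appropriately against $u^k / W^2$ produces the leading terms in $G^{ij}(\log W)_{;ij}$; the dangerous positive terms are those involving $\sum f_i \kappa_i^2 |u_i|^2$-type quantities and the curvature of $M$. Concavity \eqref{concavidade}, the structure conditions \eqref{soma}–\eqref{ki-negativo}, together with \eqref{upper-f} and \eqref{gamma-longe-zero}, are used to dominate the bad terms. The role of the subsolution/convexity term $\mu \underline{u}$ is to contribute $\mu \sum_i f_i \underline{u}_{;ii}$, which — using that $\underline{u}$ is a subsolution hence $G^{ij}$ applied to its Hessian is bounded below in a suitable pointwise sense — supplies a definite positive multiple of $\sum f_i = \mathrm{tr}(G^{ij}) W$ (via \eqref{soma}) that we can play against the bad second-order contributions once $\mu$ is chosen large depending on known data; this is why $|\underline{u}|_1$ enters the constant $C$.

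The main obstacle, as always with these gradient estimates, is controlling the cross terms of the form $G^{ij} a_j^l u_{k;i} u_l u^k$ and the terms linear in the principal curvatures with the "wrong" sign: since $\Gamma$ need not be the positive cone, some $\kappa_i$ can be negative, and this is precisely where \eqref{ki-negativo} (the lower bound $f_i \ge c_0$ when $\kappa_i < 0$) is essential — it lets us absorb the negative-curvature contributions into the good term $\sum f_i$. The inequality \eqref{gamma-longe-zero} ($\sum \kappa_i \ge \delta > 0$ on the admissible set where $f \ge \Psi_0$) is then used to convert an upper bound on $W$-growth into the final estimate, and \eqref{limite} keeps us a definite distance from $\partial\Gamma$ so that $c_0$ is a genuine constant. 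After these absorptions the maximum-principle inequality $0 \ge G^{ij}\phi_{;ij}(x_0) \ge (\text{positive}) \cdot W(x_0) - C$ forces $W(x_0) \le C$, and hence $\sup_\Omega W \le \max\{\,\sup_{\partial\Omega} W,\, C\,\}$, completing the proof. A technical point to handle with care is that $\Psi_t \ge 0$ and the height estimate from Section \ref{section3} are needed to bound the $\Psi_t u_k$ term; and one must keep track of the ambient curvature terms $R_{iljk} u^l$, which are controlled by the geometry of $M$ restricted to a neighborhood of the (compact) graph over $\overline{\Omega}$, hence absorbed into "other known data."
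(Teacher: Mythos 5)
Your overall strategy (maximum principle on a gradient-type quantity, differentiating the equation via Lemma \ref{lema-formula-psi-k} in the special coordinates) is the same family as the paper's, but your specific auxiliary function and absorption mechanism do not work, and they miss the one idea the paper's proof actually turns on. The paper takes $\zeta=|\nabla u|^2e^{Au}$, an exponential in $u$ itself, not in $\underline u$: at an interior maximum the critical-point condition gives $u^lu_{l;i}=-A|\nabla u|^2u_i$, hence in the adapted frame $u_{1;1}=-A|\nabla u|^2<0$, so the principal curvature $\kappa_1=a_1^1=-Av/W^3$ in the gradient direction is \emph{negative}, and \eqref{ki-negativo} then yields $f_1\ge c_0$. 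It is this lower bound on $f_1$ that makes the surviving good term (of order $A^2f_1v^3/W^5$, coming from the square of $u_{1;1}$ combined with the $-2A^2vG^{11}u_1^2$ term) dominate the only remaining bad term $\Psi_1u_1\le|D\Psi|\sqrt v$, after the explicit choice $A^2=2\sup|D\Psi|/c_0$; condition \eqref{gamma-longe-zero} is not used here at all. With your choice $\phi=\log W+\mu\,\underline u$ the critical-point condition reads $u^lu_{l;i}=-\mu W^2\underline u_i$, so the sign of $u_{1;1}$ at the maximum is governed by the uncontrolled sign of $\underline u_1$ there, and you cannot invoke \eqref{ki-negativo} in the way you describe.

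The second gap is your use of the subsolution. The claim that the twist contributes $\mu\sum_if_i\underline u_{;ii}\ge c\,\mu\sum_if_i$ is unjustified: $F[\underline u]\ge\Psi$ is a statement about $G(\nabla\underline u,\nabla^2\underline u)$, not about $G^{ij}(\nabla u,\nabla^2u)\,\underline u_{;ij}$; to compare the two you would need $\nabla^2\underline u\in\Gamma(\nabla u)$ and concavity, and even then you obtain $G(\nabla u,\nabla^2\underline u)-\Psi$ plus gradient terms, never a positive multiple of $\sum_if_i$ — that would require strict convexity of $\underline u$, which is not assumed (in this proposition $\underline u$ enters the constant only through $|\underline u|_1$, via the boundary gradient estimate of Section \ref{section3}). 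Moreover, even granting such a term with bounded $\mu$, it cannot absorb the bad contributions, which grow like $A^2v^2$ times the eigenvalue of $(G^{ij})$ in the gradient direction; the paper's interior argument does not rely on trace terms at all (the strictly convex function $\chi$ is used only for the boundary and global Hessian estimates, not here). To repair your proof, replace the twist $\mu\underline u$ by $Au$ and redo the computation so that the negativity of $\kappa_1$ and \eqref{ki-negativo} give $f_1\ge c_0$, which is the engine of the estimate.
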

\begin{proof}
Set $\zeta(u) = ve^{Au}$, where $v=|\nabla u|^2=u^ku_k$ and 
$A$ is a positive constant to be chosen later. 
Let $x_0$ be a point where $\zeta$ attains its maximum. If $\zeta(x_0)=0$
then $|\nabla u|= 0$ and so the result is trivial. If $\zeta$ achieves its
maximum on $\partial\Omega$, then from the boundary gradient estimate obtained in
the last section (\ref{eq-estimativa-gradiente}) holds and we are done.
Hence, we are going to assume $x_0\in\Omega$ and $\zeta(x_0)>0$. 

We fix a normal coordinate system $(x^i)$
of $M$ centered at $x_0,$ such that
\[
\frac{\partial}{\partial x^1}\Big|_{x_0}=\frac{1}{|\nabla u|(x_0)}\nabla u(x_0).
\]
In terms of these coordinates we have $u_1(x_0)=|\nabla u(x_0)|>0$ and $u_j(x_0)=0$ for $j>1.$ 
Since $x_0$ is a maximum for $ \zeta,$
\begin{align*}
0=\zeta_i(x_0) = 2e^{2Au(x_0)}\left(Avu_i(x_0)+u^lu_{l;i}(x_0)\right)
\end{align*}
and the matrix $\nabla^2\zeta(x_0)=\{\zeta_{i;j}(x_0)\}$ is nonpositive. It follows that
\begin{equation}
\label{gradiente-conta1}
u^l(x_0)u_{l;i}(x_0)=-Av(x_0)u_i(x_0)
\end{equation}
for every $1\leq i\leq n.$ From now on all computations will be made at the point $x_0.$
As the matrix $\{G^{i;j}\}$ is positive definite one has
\[
 G^{ij}\zeta_{i;j}\leq 0.
\]
We compute
\begin{align*}
\zeta_{i;j} = &2e^{2Au}\left( u^lu_{l;ij}+u^l_{;i}u_{l;j}
+Avu_{i;j}+2Au^lu_{l;j}u_i\right.\\&\left.+2Au^lu_{l;j}u_i+2A^2vu_iu_j\right).
\end{align*}
Hence
\begin{align*}\begin{scriptsize}\begin{footnotesize}\begin{small}\end{small}\end{footnotesize}\end{scriptsize}
0\geq \frac{1}{2e^{2Au}}G^{ij}\zeta_{i;j}=& G^{ij}u^lu_{l;ij}+G^{ij}u^l_{;i}u_{l;j}+AvG^{ij}u_{i;j}
\\ &+4AG^{ij}u^lu_{l;j}u_j+2A^2vG^{ij}u_iu_j.
\end{align*}
It follows from (\ref{gradiente-conta1}) that
\[
4AG^{ij}u^lu_{l;i}u_j=-4A^2vG^{ij}u_iu_j
\]
and then
\begin{align}
\label{gradiente-conta2}
G^{ij}u^lu_{l;ij}+G^{ij}u^l_{;i}u_{l;j}-2A^2vG^{ij}u_iu_j+AvG^{ij}u_{i;j}\leq 0.
\end{align}
We use the formula (\ref{formula-psi-k}) at the Lemma \ref{lema-formula-psi-k} to obtain
\begin{align*}
G^{ij}u^lu_{l;ij}=& WG^{ij}a_j^ku^lu_{l;i}u_k+WG^{ij}a_j^ku^lu_{l;k}u_i+\frac{1}{W}G^{ij}a_{ij}u^lu^ku_{l;k}
\\ &-G^{ij}R_{iljk}u^lu^k+u^l\Psi_l+\Psi_t v.
\end{align*}
Since
\begin{align*}
R_{ijlk}u^lu^k=0 
\end{align*}
and
\begin{align*}
WG^{ij}a_j^ku^lu_{l;i}u_k= WG^{ij}a_j^k(-Avu_i)u_k=-AvWG^{ij}a_j^ku_iu_k
\end{align*}
and
\begin{align*}
 \frac{1}{W}G^{ij}a_{ij}u^lu^ku_{l;k}=\frac{1}{W}G^{ij}a_{ij}u^k(-Avu_k)=-\frac{1}{W}Av^2G^{ij}a_{ij}
\end{align*}
we get
\[
G^{ij}u^lu_{l;ij}=-2AvWG^{ij}a_j^ku_iu_k-\frac{Av^2}{W}G^{ij}a_{ij}+u^l\Psi_l+\Psi_tv.
\]
Plugging this expression back into (\ref{gradiente-conta2}) we get
\begin{align*}
&-2AvWG^{ij}a_j^ku_iu_k-\frac{Av^2}{W}G^{ij}a_{ij}+u^l\Psi_l+\Psi_tv \\
&+G^{ij}u^l_{;i}u_{l;j}-2A^2vG^{ij}u_iu_j+AvG^{ij}u_{i;j}\leq 0.
\end{align*}
Since $Wa_{ij}=u_{i;j}$ we can rewrite the above inequality as
\begin{align*}
 &G^{ij}u^l_{;i}u_{l;j}-2AWvG^{ij}a_j^ku_iu_k-2A^2vG^{ij}u_iu_j\\
&+\left(AvW-\frac{Av^2}{W}\right) G^{ij}a_{ij}+u^l\Psi_l+\Psi_tv\leq 0.
\end{align*}
Using the hypothesis $\Psi_t\geq 0$ we obtain
\begin{align}
\label{gradiente-conta3}
 G^{ij}u^l_{;i}u_{l;j}-2AWvG^{ij}a_j^ku_iu_k-2A^2vG^{ij}u_iu_j+\frac{Av}{W}G^{ij}a_{ij}
 +\Psi_lu^l\leq 0.
\end{align}
From the choice of the coordinate system
and (\ref{gradiente-conta1}) it follows that
\[
u_{1;1}=-Av \hspace*{.5cm} \textrm{and} \hspace*{.5cm} u_{1;i}=u_{i;1}=0 \hspace*{.5cm}(i>1).
\]
After a rotation of the coordinates $(x^2, \ldots , x^n)$ we may assume  that
$\nabla^2 u=\{u_{i;j}(x_0)\}$ is diagonal. Since
\[
a_i^j=g^{jk}a_{ki}=\frac{1}{W}\left( \sigma^{jk}-\frac{u^ju^k}{W^2}\right)u_{k;i},
\]
at $x_0,$ we  have
\begin{align*}
&a_i^j=0 \hspace*{.5cm} (i\neq j) \\ &a_1^1=\frac{1}{W^3}u_{1;1}=-\frac{Av}{W^3}<0
\\& a_i^i=\frac{1}{W}u_{i;i} \hspace*{.5cm} (i>1).
\end{align*}
From Lemma \ref{derivada-general-f}, the matrix $\{F_i^j\}$ is diagonal. Then
$\{G^{ij}\}$ is also diagonal with
\begin{align*}
 G^{ii}=\frac{1}{W}F_k^ig^{ki}=\frac{1}{W}f_i \hspace*{.5cm} \textrm{and} \hspace*{.5cm} 
  G^{11}=\frac{1}{W}F_k^1g^{k1}=\frac{1}{W^3}f_1.
\end{align*}
Using these relations and discarding the term
\[
\frac{Av}{W}G^{ij}a_{ij}=\frac{Av}{W^2}\sum_if_i\kappa_i \geq 0
\]
we get from (\ref{gradiente-conta3}) the inequality
\begin{align*}
G^{ii}u_{i;i}^2-2AWvG^{11}a_1^1(u_1)^2-2A^2vG^{11}(u_1)^2+\Psi_1u_1\leq 0,
\end{align*}
which may be rewritten as
\begin{align*}
\sum_{\alpha>1}G^{\alpha\alpha}u_{\alpha;\alpha}^2+G^{11}
\left(\frac{2A^2v^3}{W^2}-2A^2v^2+A^2v^2\right)+\Psi_1\sqrt{v}\leq 0 .
\end{align*}
Since
\[
\frac{2A^2v^3}{W^2}-2A^2v^2+A^2v^2=\frac{A^2v^3-A^2v^2}{(1+v)^2}
\]
we have
\begin{align*}
\sum_{\alpha>1}G^{\alpha\alpha}u_{\alpha;\alpha}^2
+\frac{A^2v^3-A^2v^2}{(1+v)^2}G^{11}+\Psi_1\sqrt{v} \leq 0.
\end{align*}
Then
\[
\frac{A^2v^3-A^2v^2}{(1+v)^2} \frac{1}{W^3}f_1\leq -\Psi_1\sqrt{v}\leq |D\Psi| \sqrt{v}.
\]
Once
\[
\kappa_1=a_1^1=-\frac{Av}{W^3}<0,
\]
we may apply hypothesis (\ref{ki-negativo}) to get $f_1\geq c_0>0,$ which implies 
\[
\frac{A^2v^3-A^2v^2}{W^5\sqrt{v}}\leq \frac{|D\Psi|}{c_0} .
\]
Now we choose
\[
A=\left( \frac{2}{c_0}\sup_{M\times I} |D\Psi|\right)^{1/2},
\]
where $I$ is the interval $I=[-C, C]$ with $C$ being a uniform constant that satisfies $|u|_0<C.$ 
Therefore,
\[
\frac{(u_1)^3((u_1)^2-1)}{(1+(u_1)^2)^{5/2}}\leq \frac{1}{2},
\]
that is,
\[
(u_1)^5-(u_1)^3-\frac{1}{2}(1+(u_1)^2)^{5/2}<0.
\]
Since $u_1>0$ this yields a bound for $u_1$ and hence for $\zeta(x_0),$ which implies the desired estimate. 
\end{proof}


\section{Boundary Estimates for Second Derivatives}
\label{section5}

In this section we establish the crucial  {\it a priori} second derivatives estimates at 
the boundary.
Bounds for pure tangential derivatives follow from the relation 
$u=\varphi$ on $\partial\Omega.$ It remains to estimate the mixed and double normal derivatives. 

Consider the linearized operator 
\[
L=G^{ij}-b^i,
\]
where $b^i=\frac{1}{W^2}\sum_j f_j\kappa_j u^i.$
It follows from  (\ref{relacao_euler}), (\ref{upper-f})
and (\ref{eq-estimativa-gradiente}) that $|b^i|\leq C$ for a uniform constant $C.$

To proceed, we first derive some key preliminary lemmas. Let $x_0$ be a point on 
$\partial\Omega.$ Let $\rho(x)$ denote the distance from $x$ to $x_0,$ 
$\rho(x)=\textrm{dist}(x,x_0),$ and set
\[
\Omega_{\delta}=\{x\in\Omega\, :\, \rho(x)<\delta\}.
\]
Since $(\rho^2)_{\,i;j}(x_0)= 2\sigma_{ij}(x_0)$, by choosing $\delta>0$ sufficiently small we 
may assume $\rho$ smooth in $\Omega_{\delta}$ and
\begin{equation}
\label{RHO-2}
\sigma_{ij}\leq (\rho^2)_{\,i;j}\leq 3\sigma_{ij} \hspace*{.5cm}\textrm{in}\,\, \Omega_\delta.
\end{equation}
Since $\partial\Omega$ is smooth,  we may also assume that the distance function $d(x)$ to the 
boundary $\partial\Omega$ is smooth in $\Omega_\delta.$ In what follows, we redefine
the boundary function $\varphi$ in $\Omega_\delta$ as being  
constant along the normals to $\partial \Omega.$ 
 
Let $\xi$ be a $C^2$ arbitrary vector field defined in $\Omega_\delta$ and  $\eta$ any extension 
to $\Omega_\delta$ of the restriction of the vector field $\nabla u$ to $\partial\Omega\cap\partial
\Omega_{\delta}.$ For instance, we can take $\eta$ as the parallel transport of $\nabla u$ along the
normals geodesics to $\partial\Omega$ in $\Omega_\delta.$
Inspired in the approach used by Ivochkina in  \cite{IVO2} we define the function
\begin{equation}
\label{CARA-W}
w=\langle\nabla u , \xi\rangle- \langle\nabla \varphi , \xi\rangle-\frac{1}{2}|\nabla u-\eta|^2.
\end{equation}
The function $w$ satisfies a fundamental inequality.
\begin{proposition}
\label{DEF-W}
Assume that $f$ satisfies (\ref{elipticidade})-(\ref{relacao_euler}). Then the function $w$ satisfies
\begin{equation}
\label{LW}
L[w]\leq C(1+ G^{ij}\sigma_{ij}+G^{ij}w_iw_j) \quad \textrm{in}  \quad \Omega_\delta,
\end{equation}
where $C$ is a uniform positive constant.
\end{proposition}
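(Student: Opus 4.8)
The plan is to compute $L[w]$ directly from the definition \eqref{CARA-W} of $w$, splitting the computation into the three pieces $w^{(1)}=\langle\nabla u,\xi\rangle$, $w^{(2)}=-\langle\nabla\varphi,\xi\rangle$, and $w^{(3)}=-\tfrac12|\nabla u-\eta|^2$. The term $w^{(2)}$ is harmless: $\varphi$ and $\xi$ are fixed smooth data, so $G^{ij}(w^{(2)})_{i;j}$ is controlled by $G^{ij}\sigma_{ij}$ (after using $|G^{ij}|\le G^{kl}\sigma_{kl}$ coming from positivity of $(G^{ij})$, together with the gradient bound of Proposition \ref{estimativa-gradiente} and $|b^i|\le C$). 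The real work is in $w^{(1)}$ and $w^{(3)}$, both of which involve third derivatives of $u$ through $u_{k;ij}$, and these must be eliminated using the structural identity of Lemma \ref{lema-formula-psi-k}.

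First I would handle $w^{(1)}$. Writing $\langle\nabla u,\xi\rangle=\xi^k u_k$ and differentiating covariantly twice, one gets terms $G^{ij}\xi^k u_{k;ij}$ plus lower-order terms in which at most two derivatives of $u$ and two of $\xi$ appear; the latter are bounded by $C(1+G^{ij}\sigma_{ij})$ using $|\nabla u|\le C$ and the bound on $b^i$. For the leading term I substitute formula \eqref{formula-psi-k}: $G^{ij}u_{k;ij}$ equals a sum of terms each of which is either $W G^{ij}a_j^l u_{k;i}u_l$ (or its symmetrization), the curvature term $-G^{ij}R_{iljk}u^l$, or $\Psi_k+\Psi_t u_k$. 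The $\Psi$ terms are bounded since $\Psi\in C^{2,\alpha}$ and $|u|_0\le C$; the curvature term is bounded by $C\,G^{ij}\sigma_{ij}$ (using $|\nabla u|\le C$ and boundedness of $\bar R$ on the relevant compact set). The terms involving $a_j^l u_{k;i}$ contain second derivatives of $u$, but they are linear in $u_{k;i}=W a_{ki}$ contracted against $G^{ij}a_j^l$; this is exactly what the $b^i$-part of $L$ is designed to cancel, or what gets absorbed into $G^{ij}w_iw_j$. Concretely, one rewrites $u_{k;i}$ in terms of $w_i$ plus derivatives of the fixed fields, $w_i = (\text{stuff}) + \xi^k u_{k;i} + \ldots$, so that $G^{ij}a_j^l u_{k;i}u_l$ reorganizes into a multiple of $b^i w_i$-type terms plus $C\,G^{ij}\sigma_{ij}$; combined with the $-b^i w_i$ already in $L$, these collapse, and the genuinely quadratic leftover is dominated by $\varepsilon\,G^{ij}w_iw_j$ plus $C_\varepsilon G^{ij}\sigma_{ij}$ by Cauchy–Schwarz applied to the positive-definite form $G^{ij}$.

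Next, for $w^{(3)}=-\tfrac12|\nabla u-\eta|^2=-\tfrac12(u^k-\eta^k)(u_k-\eta_k)$: differentiating twice gives $G^{ij}(w^{(3)})_{i;j}= -G^{ij}(u^k-\eta^k)(u_k-\eta_k)_{;ij} - G^{ij}(u_k-\eta_k)_{;i}(u^k-\eta^k)_{;j}$. The second term is $\le 0$ (since $(G^{ij})\ge 0$) and can simply be discarded. In the first term, the third-order part $-G^{ij}(u^k-\eta^k)u_{k;ij}$ is again attacked with Lemma \ref{lema-formula-psi-k}, and now the key point is that $u^k-\eta^k$ vanishes on $\partial\Omega$, hence $|u^k-\eta^k|\le C\rho$ in $\Omega_\delta$, which gives an extra smallness factor — but more importantly, after substituting \eqref{formula-psi-k} the $a_j^l$-terms are again linear in $u_{k;i}$, so they recombine with the $b^i$-term of $L$ as above; the derivatives of $\eta$ contribute only bounded quantities times $G^{ij}\sigma_{ij}$ since $\eta$ is a fixed smooth extension. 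I expect that after collecting everything, all occurrences of $u_{k;ij}$ have been traded (via Lemma \ref{lema-formula-psi-k}) for terms of the four admissible types on the right of \eqref{LW}.

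The main obstacle, and the step needing the most care, is the bookkeeping that shows the dangerous terms — those bilinear in $G^{ij}$ and in $u_{i;j}$ with no compensating smallness — actually cancel against the $-b^i w_i$ term built into $L$ or get absorbed into $G^{ij}w_iw_j$. This is precisely why $w$ includes the quadratic correction $-\tfrac12|\nabla u-\eta|^2$ and why $L$ carries the drift $b^i=\tfrac1{W^2}\sum_j f_j\kappa_j u^i$: the construction is engineered so that $\nabla w = \xi^\flat\cdot(\text{Hess }u) + (\text{bounded})$ and the second-fundamental-form contractions telescope. I would carry out this cancellation by always re-expressing $u_{i;j}$ as $W a_{ij}$, freely using the special structure $G^{ij}=\tfrac1W F^{ij}$, and invoking \eqref{relacao_euler} and \eqref{upper-f} to keep $\sum f_i\kappa_i$ between fixed bounds; at the end, Cauchy–Schwarz in the metric $G^{ij}$ converts the residual first-order-in-$w$ terms into $\varepsilon G^{ij}w_iw_j + C_\varepsilon(1+G^{ij}\sigma_{ij})$, yielding \eqref{LW}.
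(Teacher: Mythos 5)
Your overall strategy (differentiate $w$ twice, trade the third-order terms $G^{ij}u_{k;ij}$ for lower-order ones via Lemma \ref{lema-formula-psi-k}, then absorb) is the same skeleton as the paper's proof, but there is a fatal step: you discard the term $-G^{ij}(u_k-\eta_k)_{;i}(u^k-\eta^k)_{;j}$ coming from $-\tfrac12|\nabla u-\eta|^2$ on the grounds that it is nonpositive. That term is not a nuisance term to be thrown away; it is the engine of the whole argument. In the special frame it equals (up to controlled corrections in $\eta$) $-G^{ij}u^k_{;i}u_{k;j}=-W\sigma^{ii}f_i\kappa_i^2\leq -C_0\sum_i f_i\kappa_i^2$, and this is the \emph{only} available quantity that can absorb the errors of size $\varepsilon\sum_i f_i\kappa_i^2$ which your own Cauchy--Schwarz step produces. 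Indeed, after substituting \eqref{formula-psi-k} the surviving dangerous terms are of the form $WG^{ij}a_j^lw_iu_l$, $2G^{ij}\big((\xi^k)_j+(\eta^k)_j\big)u_{k;i}$ and $WG^{ij}a_j^lu_l\mu_i$: each is bilinear in the (unbounded) second fundamental form and a quantity that is merely bounded or equal to $w_i$, so Cauchy--Schwarz with respect to $G^{ij}$ gives at best $\varepsilon\sum_i f_i\kappa_i^2+C_\varepsilon\big(\sum_i f_i w_i^2+\sum_i f_i\big)$, \emph{not} $\varepsilon G^{ij}w_iw_j+C_\varepsilon G^{ij}\sigma_{ij}$ as you assert. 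Note that \eqref{relacao_euler} and \eqref{upper-f} only bound the signed sum $\sum_i f_i\kappa_i$ (hence $|b^i|\leq C$); they do not bound $\sum_i f_i|\kappa_i|$, let alone $\sum_i f_i\kappa_i^2$, so without the retained negative quadratic term the inequality \eqref{LW} cannot be reached. The drift $-b^iw_i$ in $L$ cancels exactly one term, namely $\tfrac1W G^{jl}a_{jl}u^iw_i$; it cannot do the job you are assigning to it.

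A secondary structural problem: your splitting of $w$ into $w^{(1)}$, $w^{(2)}$, $w^{(3)}$ obscures the identity that makes the substitution work. One has
\begin{equation*}
w_i=\big(\xi^k+\eta^k-u^k\big)u_{k;i}+\big((\xi^k)_i+(\eta^k)_i\big)u_k-\mu_i-\langle\nabla_i\eta,\eta\rangle ,
\end{equation*}
so it is the combined coefficient $\xi^k+\eta^k-u^k$ (not $\xi^k$ alone, nor $u^k-\eta^k$ alone) that multiplies $u_{k;ij}$ and that must be re-expressed through $w_i$ after applying \eqref{formula-psi-k}; treating the pieces separately, the $b^i$-cancellation and the $w_i$-identification do not close. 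Also, the claimed smallness $|u^k-\eta^k|\leq C\rho$ is neither true off the boundary portion $\partial\Omega\cap\partial\Omega_\delta$ nor needed. The paper's proof keeps $w$ intact, uses the identity above to convert $(\xi^k+\eta^k-u^k)u_{k;i}$ into $w_i$ plus controlled terms, estimates each remaining term by $\varepsilon\sum_i f_i\kappa_i^2+C\big(\sum_i f_iw_i^2+\sum_i f_i+1\big)$ in the special coordinate system, and then chooses $\varepsilon$ small so that the retained term $-C_0\sum_i f_i\kappa_i^2$ dominates; you need to restore exactly this mechanism.
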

\begin{proof}
For convenience we denote $\mu = \langle\nabla \varphi , \xi\rangle.$
First we calculate the derivatives of $w$ in an arbitrary coordinate system.
We have
\begin{align*}
w_i &=  \langle  \nabla_i \nabla u , \xi \rangle+\langle\nabla u , \nabla_i\xi\rangle
-\mu_i-\langle \nabla_i\nabla u-\nabla_i\eta , \nabla u-\eta \rangle \\
&=\left(\xi^k +\eta^k- u^k\right)u_{k;i} +\left((\xi^k)_i +(\eta^k)_i\right)u_k
-\mu_i- \langle \nabla_i\eta ,\eta\rangle
\end{align*}
and
\begin{align*}
w_{i;j} =& \langle\nabla_j\nabla_i\nabla u, \xi\rangle+\langle\nabla_i\nabla u , \nabla_j \xi\rangle
+\langle\nabla_j\nabla u , \nabla_i\xi\rangle +\langle\nabla u , \nabla_j\nabla_i\xi\rangle \\ & 
-\mu_{i;j}-\langle\nabla_j\nabla_i\nabla u -\nabla_j\nabla_i\eta , \nabla u-\eta\rangle
-\langle\nabla_i\nabla u -\nabla_i\eta , \nabla_j\nabla u-\nabla_j\eta\rangle
\\=& \left(\xi^k +\eta^k-u^k\right)u_{k;ij}+\left( (\xi^k)_j+(\eta^k)_j\right)u_{k;i} 
+\left( (\xi^k)_i+(\eta^k)_i\right)u_{k;j}  \\ & -u^k_{;i}u_{k;j}
+\left( (\xi^k)_{i;j}+(\eta^k)_{i;j}\right)u_k
-\mu_{i;j} -\langle \nabla_i\eta ,\nabla_j\eta\rangle -\langle \nabla_j\nabla_{i}\eta ,\eta\rangle,
\end{align*}
where we denote by $\xi^k, (\xi^k) _i$ and $(\xi^k) _{i;j}$ the components of the vectors
$\xi, \nabla_i\xi$ and $\nabla_j\nabla_i\xi,$ respectively (the same notation is used for $\eta$).
Therefore,
\begin{align*}
G^{ij}w_{i;j} =& \left(\xi^k +\eta^k-u^k\right)G^{ij}u_{k;ij}+2G^{ij}\left( (\xi^k)_j+(\eta^k)_j\right)u_{k;i} 
  -G^{ij}u^k_{;i}u_{k;j}\\ & 
+G^{ij}\Big(\left( (\xi^k)_{i;j}+(\eta^k)_{i;j}\right)u_k
-\mu_{i;j} -\langle \nabla_i\eta ,\nabla_j\eta\rangle -\langle \nabla_j\nabla_{i}\eta ,\eta\rangle
\Big).
\end{align*}
Now we use (\ref{formula-psi-k}) to obtain 
\begin{align*}
\left(\xi^k +\eta^k-u^k\right)  G^{ij} u_{k;ij} = &W\left(\xi^k+\eta^k-u^k\right)G^{ij}a_j^lu_{k;i}u_l
+W\left(\xi^k+\eta^k-u^k\right)\\ &\times G^{ij}a_j^lu_{k;l}u_i  
+\frac{1}{W}\left(\xi^k+\eta^k-u^k\right)G^{jl}a_{jl}u^iu_{k;i} \\
&+\left(\xi^k+\eta^k-u^k\right)\left(\Psi_k+\Psi_t u_k-G^{ij}R_{iljk}u^l\right).
\end{align*}
On the other hand, it follows from the expression for $w_i$ that
\begin{align*}
\left(\xi^k+\eta^k-u^k\right)u_{i;k} &= w_i-\left((\xi^k)_i+(\eta^k)_i\right)u_k
+\mu_i+\langle \nabla_i\eta ,\eta\rangle .
\end{align*}
Substituting these equalities in the above equation we get
\begin{align}
\begin{split}
\label{CONTA-HESSIAN-BOUNDARY-2}
G^{ij}&w_{i;j}=WG^{ij}a_j^lw_iu_l+WG^{ij}a_j^lw_lu_i+\frac{1}{W}G^{jl}a_{jl}u^iw_i
-G^{ij}u^k_{;i}u_{k;j} \\&+2G^{ij}\left( (\xi^k)_j+(\eta^k)_j\right)u_{k;i}+WG^{ij}a_j^lu_l 
\Big(\mu_i-\left((\xi^k)_i+(\eta^k)_i\right)u_k \\ &+\langle \nabla_i\eta ,\eta\rangle\Big)
 +WG^{ij}a_j^lu_i \Big(\mu_l-\left((\xi^k)_l+(\eta^k)_l\right)u_k
+\langle \nabla_l\eta ,\eta\rangle\Big)+\\&\frac{1}{W}G^{jl}a_{jl}u^i 
  \Big(\mu_i-\left((\xi^k)_i+(\eta^k)_i\right)u_k 
+\langle \nabla_i\eta ,\eta\rangle\Big)\\&
+G^{ij}\Big(\left( (\xi^k)_{i;j}+(\eta^k)_{i;j}\right)u_k
-\mu_{i;j} -\langle \nabla_i\eta ,\nabla_j\eta\rangle 
-\langle \nabla_j\nabla_{i}\eta ,\eta\rangle\\ &-\left(\xi^k+\eta^k-u^k\right)
R_{iljk}u^l\Big)+\left(\xi^k+\eta^k-u^k\right)(\Psi_k+\Psi_t u_k).
\end{split}
\end{align}
Since (\ref{LW}) does not depend on the coordinate system, i.e., it is a tensorial
inequality,  it is sufficient to prove it in a fixed  coordinate system. 
Given $x\in\Omega,$ let $(x^i)$ be the {\it special} coordinate system centered at $x.$ 
In terms of this coordinates the inequality (\ref{LW}) takes at $x$ the form
\begin{equation}
\label{LW-2}
L[w]=\frac{1}{W}\sum_i f_iw_{i;i}-b^iw_i\leq C\left(1+\frac{1}{W}\sum_i f_i\sigma_{ii}+
\frac{1}{W}\sum_i f_iw_i^2\right).
\end{equation}
We will prove the above inequality. In what follows all computations are done at the point $x$.

In these coordinates we have $\kappa_i =a_i^j=a_{ij}=\frac{1}{W}u_{i;j}\delta_{ij}$ and
$ G^{ij}=\frac{1}{W}f_i\delta_i^j.$ Since the quantities depending on 
$\nabla u, \, \xi, \, \eta$ and $\mu$ are under control, we get
\begin{align*}
WG^{ij}a_j^lw_iu_l&= \sum_i f_i\kappa_i w_iu_i\leq \varepsilon\sum_i f_i \kappa_i^2 
+\frac{1}{\varepsilon}\sum_i f_i w_i^2u_i^2 
\\ &\leq \varepsilon \sum_i f_i\kappa_i^2 + C \sum_i f_i w_i^2
\\ 2G^{ij}\left( (\xi^k)_j+(\eta^k)_j\right)u_{k;i}&= 
2\left( (\xi^i)_i+(\eta^i)_i\right)f_i\kappa_i\leq  
\varepsilon\sum_i f_i \kappa_i^2 +C\sum_i f_i\\
WG^{ij}a_j^lu_l \mu_i & = \sum_i f_i\kappa_i u_i\mu_i 
\leq \varepsilon\sum_i f_i \kappa_i^2 +C\sum_i f_i\\
G^{ij}u^k_{;i}u_{k;j}&= G^{ij}\sigma^{kl}u_{l;i}u_{k;j} = W^2G^{ij}\sigma^{kl}a_{l;i}a_{k;j} 
=W\sigma^{ii} f_i \kappa_i^2 \\ &\geq C_0 \sum_i f_i \kappa_i^2 \\
\frac{1}{W}G^{jl}a_{jl}u^i \mu_i &=\frac{1}{W^2} \sum_j f_j\kappa_j u^i\mu_i\leq C\sum_j f_j\kappa_j \leq C
\\ G^{i;j}\mu_{i;j}&=\frac{1}{W}\sum_i f_i\mu_{i;i}\leq C\sum_i f_i,
\end{align*}
where $\varepsilon>0$ is any positive number and $C_0>0$ depends
only on $\sigma|_{\overline\Omega}.$ To obtain the above 
inequalities we made use of the ellipticity condition
$f_i>0.$ Estimating all the terms in 
(\ref{CONTA-HESSIAN-BOUNDARY-2}) as above, we 
conclude that  equality 
(\ref{CONTA-HESSIAN-BOUNDARY-2}) implies 
\begin{align}
\label{CONTALW1}
\begin{split}
L[w]\leq (\varepsilon C-C_0)\sum_i f_i\kappa_i^2+C\sum_i f_i w_i^2+C\sum_i f_i+C.
\end{split}
\end{align}
Choosing $\varepsilon>0$ sufficiently small such that the first term on the sum above
becomes negative we obtain 
\[
L[w] \leq C\big(1+\sum_i f_i+\sum_i f_i w_i^2\big).
\]
Using that $\sigma_{ii}>C_0>0 \, $ in $\overline\Omega$ and $W$ is under control,
we get (\ref{LW-2}).
\end{proof}
We note that inequality (\ref{LW}) may be simplified further. In fact, since 
\[
G^{ij}\sigma_{ij}\geq \delta_0>0,
\]
replacing $C$ to $C/\delta_0+C$ (we may assume $1>\delta_0>0$) we get
\begin{equation}
\label{LW3}
L[w]\leq C(G^{ij}\sigma_{ij}+G^{ij}w_iw_j) \hspace*{.5cm} \textrm{in}  \,\, \Omega_\delta.
\end{equation}

Setting 
\begin{equation}
\label{w-til}
\tilde{w}= 1-e^{-a_0 w}
\end{equation}
for a positive constant  $a_0,$ we get $\tilde{w}_i = a_0e^{-a_0w}w_i $ and
$\tilde{w}_{i;j} = a_0e^{-a_0w}\left(w_{i;j}-a_0w_iw_j\right).$ Therefore,
\begin{align*}
L[\tilde{w}] &= G^{ij}\tilde w_{ij}-b^i\tilde w_i= a_0e^{-a_0w}(L[w]-a_0 G^{ij}w_iw_j),
\end{align*}
if we choose $a_0$ large such that $a_0\geq C,$ where $C$ is the constant in (\ref{LW3}),
\[
L[w]-a_0 G^{ij}w_iw_j\leq L[w]-C G^{ij}w_iw_j\leq CG^{ij}\sigma_{ij}.
\]
Hence
\begin{align}
\label{LW-TIL}
L[\tilde{w}] \leq   CG^{ij}\sigma_{ij}.
\end{align}
Now we extend to curvature equations the Lemma 6.2 in \cite{GUAN-2} 
obtained by Guan to Hessian equations. This lemma gives the elements to 
complete the construction of a barrier function.
\begin{lemma}
\label{DEF-V}
Assume that $f$ satisfies  (\ref{elipticidade})-(\ref{produto}). 
Then there exist some uniform posi\-tive constants $t, \delta , \varepsilon$ sufficiently small and 
$N$ sufficiently large such that the function
\begin{equation}
\label{V}
v=(u-\underline{u}) +td-\frac{N}{2}d^2
\end{equation}
satisfies
\begin{equation}
\label{LV<0}
L[v]\leq -\varepsilon(1+G^{ij}\sigma_{ij}) \quad \textrm{in}  \quad \Omega_\delta
\end{equation}
and
\[
v\geq 0 \quad\textrm{on} \quad \partial\Omega_\delta.
\]
\end{lemma}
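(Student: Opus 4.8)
The plan is to construct $v$ as a linear combination of the three building blocks already prepared: the supersolution defect $u-\underline u$, the boundary distance $d$, and its square $d^2$, and then to tune the constants $t$, $N$, $\delta$, $\varepsilon$ so that $L[v]$ is strictly negative and dominates $G^{ij}\sigma_{ij}$. First I would record the action of $L$ on each piece. Since $\underline u$ is an admissible subsolution, concavity of $G$ (Lemma~\ref{derivada-general-f}) gives the standard inequality $G^{ij}(\nabla u)(\underline u_{i;j}-u_{i;j})\geq G(\nabla u,\nabla^2\underline u)-G(\nabla u,\nabla^2 u)$, and combined with $G[\underline u]\geq \Psi(x,\underline u)$, $G[u]=\Psi(x,u)$ together with the height estimate $\underline u\leq u$ and $\Psi_t\geq 0$ this yields $L[u-\underline u]\leq -b^i(u-\underline u)_i + (\text{controlled})$, i.e. a term bounded by a uniform constant $C$. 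The key positive gain will come from the $-\tfrac N2 d^2$ term: because $(d^2)_{i;j}=2d_id_j+2d\,d_{i;j}$ and $|\nabla d|=1$, we get $L[-\tfrac N2 d^2]=-N G^{ij}d_id_j - Nd\,G^{ij}d_{i;j}+Nd\,b^id_i$, and on $\Omega_\delta$ the middle and last terms are $O(N\delta)\sum f_i$, while $-NG^{ij}d_id_j$ is a genuinely negative term of size $\asymp N f_\nu/W$ where $\nu$ is the normal direction.

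The heart of the argument is to show that the combination produces $-\varepsilon G^{ij}\sigma_{ij}$, not merely $-\varepsilon$. The issue is the directions $i$ for which $\kappa_i$ is large and positive: there $f_i$ can be small (think of $S_{k,l}$), so $-NG^{ij}d_id_j$ alone need not control $G^{ij}\sigma_{ij}=\tfrac1W\sum f_i\sigma_{ii}$. This is exactly where hypothesis (\ref{produto}), $(f_1\cdots f_n)^{1/n}\geq c_0$, and (\ref{gamma-longe-zero}) enter, following Guan's Lemma~6.2 in \cite{GUAN-2}. The strategy there is a case split at the point $x$ (working in the special coordinate system centered at $x$): either all eigenvalues of $A(\nabla v,\nabla^2 v)$ lie in a fixed compact subset of $\Gamma$, in which case $G^{ij}$ is uniformly elliptic and bounded above, so $t\,L[d]$ plus the negative $d^2$-term plus the subsolution term already beat $G^{ij}\sigma_{ij}$ for $t$ small and $N$ large; or some eigenvalue is large, in which case one uses $\sum f_i\kappa_i\leq f\leq\Psi_1$ (\ref{upper-f}) together with the arithmetic–geometric mean applied via (\ref{produto}) to extract that $\prod f_i$ bounded below forces $\sum f_i$ to be large, and then $-NG^{ij}d_id_j$ of size $\asymp N\min_i f_i$ absorbs everything. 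I would phrase this by choosing, in order, $\delta$ small so the $d\,d_{i;j}$ and $b^i$ contributions in $\Omega_\delta$ are swallowed, then $N$ large so $-NG^{ij}d_id_j$ dominates $2CG^{ij}\sigma_{ij}$ plus the $O(1)$ subsolution term in the "large eigenvalue" regime, then $t$ small (with $td\leq \tfrac N2 d^2$ harmless since $d\leq\delta$) so the boundary inequality $v\geq0$ on $\partial\Omega_\delta$ holds, and finally $\varepsilon$ small enough to absorb the remaining slack. Here one uses that $v=0$ on $\partial\Omega$ (as $u=\underline u=\varphi$ there in the Theorem~\ref{teorema2} setting, or $u=\varphi$ and $\underline u\leq u$ giving $u-\underline u\geq0$ in general) and that on the spherical part $\rho=\delta$ one has $d\leq\delta$ small so $td-\tfrac N2 d^2\geq -\tfrac N2\delta^2$, which is compensated by $u-\underline u\geq c\,\delta$ for a structural constant $c>0$ coming from the gradient bound — or more simply by further shrinking $\delta$ relative to $N$, or by adding an $\varepsilon\rho^2$-type term as in \cite{GUAN-2} and using (\ref{RHO-2}).

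The main obstacle is precisely the degenerate-ellipticity bookkeeping in the "large eigenvalue" case: one must be careful that when some $\kappa_i\to\infty$ the corresponding $f_i$ may degenerate, yet by (\ref{produto}) the product stays bounded below, so $\max_i f_i$ blows up and hence $\sum f_i$ does too; quantifying "$-NG^{ij}d_id_j$ beats $G^{ij}\sigma_{ij}$" then requires comparing $f_n$ (the normal eigenvalue's coefficient) with $\sum_i f_i$, which is not automatic and is the reason (\ref{ki-negativo}) and (\ref{produto}) are both invoked in Guan's original lemma. I would handle it exactly as in \cite{GUAN-2}: split into the region where $|\kappa|\leq R$ for a large fixed $R$ (uniform ellipticity, routine) and its complement, and in the complement use $\sum f_i\kappa_i\leq\Psi_1$ to bound the "good" directions and (\ref{produto}) plus AM–GM to bound below the contribution of $-NG^{ij}d_id_j$. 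Everything else — the precise form of $L[u-\underline u]$, the expansions of $L[d]$ and $L[d^2]$, the verification of $v\geq0$ on $\partial\Omega_\delta$ — is routine once the constants are fixed in the order $\delta$, then $N$, then $t$, then $\varepsilon$.
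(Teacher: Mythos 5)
There is a genuine gap at the heart of your outline: you never actually produce the $-\varepsilon G^{ij}\sigma_{ij}$ term, because you discard the only mechanism that can generate it. By estimating $L[u-\underline u]\leq C$ you throw away the gain; the paper's proof instead perturbs the subsolution, setting $v^*=\underline u-3\varepsilon\rho^2$, which is still admissible with $\nabla^2v^*\in\Gamma(\nabla u)$ for $\varepsilon$ small precisely because $\underline u$ is \emph{locally strictly convex} near $\partial\Omega$ (hypothesis (ii) of Theorem \ref{teorema2}, or the function $w$ of Remark \ref{remark-1} in the setting of Theorem \ref{teorema1}) --- a hypothesis your argument never invokes, which is a warning sign. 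Applying the concavity inequality $G^{ij}(p,r)(r_{ij}-s_{ij})\leq G(p,r)-G(p,s)$ to $r=\nabla^2u$, $s=\nabla^2v^*$ then yields
\begin{equation*}
L[u-\underline u]\leq C_1-3\varepsilon G^{ij}(\rho^2)_{i;j}\leq C_1-3\varepsilon G^{ij}\sigma_{ij}
\end{equation*}
by \eqref{RHO-2}. This is the source of the negative multiple of $G^{ij}\sigma_{ij}$ in \eqref{LV<0}; the term $-NG^{nn}$ plays a different role: combined with \emph{one} of the three $\varepsilon G^{ij}\sigma_{ij}$'s and AM--GM via \eqref{produto}, it gives $\varepsilon G^{ij}\sigma_{ij}+NG^{nn}\geq C_3N^{1/n}$, a large \emph{constant} that absorbs $C_1$, while the leftover $-\varepsilon G^{ij}\sigma_{ij}$ survives to the conclusion.

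Your substitute --- letting $-NG^{ij}d_id_j\asymp -NG^{nn}\gtrsim -cNf_{\min}$ ``absorb everything'' in the large-eigenvalue regime --- fails quantitatively. Under \eqref{elipticidade}--\eqref{produto} (e.g.\ for the quotients $S_{k,l}$, or already for $S_2^{1/2}$ with one huge principal curvature) one can have $f_{\min}\to0$ while $\sum_if_i\to\infty$ with $\prod_if_i$ bounded below: \eqref{produto} controls the product, not the smallest $f_i$. Since $N$, $t$, $\delta$, $\varepsilon$ must be fixed uniform constants, your total bound $L[v]\leq C+C(t+N\delta)(1+G^{ij}\sigma_{ij})-cNf_{\min}$ has a positive coefficient in front of $G^{ij}\sigma_{ij}$ and an absorbing term that is vanishingly small compared with $\sum_if_i$, so it can never be pushed below $-\varepsilon(1+G^{ij}\sigma_{ij})$ in that regime; the AM--GM step you cite only trades an \emph{already available} negative $\varepsilon\sum_if_i$ plus $Nf_{\min}$ for a large constant, it cannot manufacture the negative $\sum_if_i$ term. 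Your compact-eigenvalue case and your second option for the boundary inequality (choosing $t=\varepsilon/2C_2$ and $\delta\leq t/N$ so that $td-\tfrac N2d^2\geq0$ and $u\geq\underline u$ give $v\geq0$ on $\partial\Omega_\delta$) do agree with the paper; but the alternative claim $u-\underline u\geq c\,\delta$ on the spherical part ``from the gradient bound'' is unjustified, since only $u\geq\underline u$ is known.
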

\begin{proof}
Since $\underline u$ is locally strictly convex in a neighborhood of 
$\partial\Omega$ we may choose $\delta>0$ small enough such that the eigenvalues 
$\lambda(\nabla^2\underline u) \in \Gamma^+$ in $\Omega_\delta.$ 
In particular, we have $\nabla^2\underline u\in\Gamma(\nabla u)$ 
in  $\Omega_\delta.$ Consider the function $v^*=\underline{u}-3\varepsilon\rho^2$. 
Since $\Gamma(\nabla u)$ is open 
and $F[\underline u]>0,$ we may choose $\varepsilon>0$ sufficiently small, 
such that $v^*$ is admissible and $\nabla^2 v^*\in \Gamma(\nabla u)$
in $\Omega_\delta$. 

It follows from the concavity of $G(p, \cdot)$ that
\[
G^{ij}(p,r)(r_{ij}-s_{ij})\leq G(p,r)-G(p,s) \quad {\rm for \,all}\quad  r,s\in\Gamma(p).
\]
Applying this inequality we get
\begin{align*}
L[u-\underline{u}]&=L[u-v^*-3\varepsilon\rho^2]
\\ &=G^{ij}(u_{i;j}-v^*_{i;j})-b^i(u_i-v^*_i)-3\varepsilon L[\rho^2] 
\\ & \leq  G(\nabla u,\nabla^2u)-G(\nabla u, \nabla^2v^*)-b^i(u_i-
v^*_i)\\ &-3\varepsilon G^{ij}(\rho^2)_{\, i;j}+6\varepsilon\rho b^i\rho_i.
\end{align*}
Since $G(\nabla u,\nabla^2u)=\Psi$ and $G(\nabla u, \nabla^2v^*)>0,$
it follows from the $C^1$ estimate and the boundedness of $b^i$ that
\[
L[u-\underline{u}]\leq C_1-3\varepsilon G^{ij}(\rho^2)_{i;j}.
\]
Hence, we conclude from (\ref{RHO-2})
\begin{equation}
\label{CONTALV-1}
L[u-\underline{u}]\leq C_1-3\varepsilon G^{ij}\sigma_{i;j}.
\end{equation}
As in the previous lemma, the inequality proposed is a tensorial one.
So, it is sufficient to prove (\ref{LV<0}) in a fixed coordinate system.
For $\delta>0$ small we may define Fermi
coordinates $(y^i)$ on $\Omega_\delta$ along $\partial\Omega,$
such that $y^n=d$ is the normal coordinate.
In these coordinates we have $d_\alpha=0, \, 1\leq\alpha \leq n-1,$ and  $d_n=1.$
Hence, a straightforward computation yields
 \[
 L\left[td-\frac{N}{2}d^2\right]=(t -dN)L[d]-NG^{nn}.
 \]
Since there exists a uniform positive constant $C$ that satisfies 
$d_{i;j}\leq C \sigma_{ij} $
in $\Omega_\delta$ and $|b^i|<C,$ we have
 \[
L\left[td-\frac{N}{2}d^2\right]  \leq C_2(t+N\delta)(1+G^{ij}\sigma_{ij})-NG^{nn}.
 \]
This inequality and (\ref{CONTALV-1}) give
\begin{align*}
L[v] &\leq   L[u-\underline{u}]+L\left[td-\frac{N}{2}d^2\right] 
\\ &\leq  C_1-3\varepsilon G^{ij}\sigma_{ij}+
C_2(t+N\delta)(1+G^{ij}\sigma_{ij})-NG^{nn} 
\\ &= C_1+C_2(t+N\delta)+\left(C_2(t+N\delta)-
3\varepsilon\right)G^{ij}\sigma_{ij}-NG^{nn}.
\end{align*}
As in \cite{GUAN-2}, we choose indices such that 
$f_1\ge \cdots \geq f_n.$ Since the eigenvalues of the matrix $G^{ij}$ 
are $\frac{1}{W}f_1, \ldots , \frac{1}{W} f_n,$ it follows from our choice of indices that 
\[
G^{nn}\geq \frac{1}{W}f_n\geq Cf_n \hspace*{.5cm} \textrm{and} \hspace*{.5cm}
G^{ij}\sigma_{ij}\geq C\sum_ if_i.
\]
Using the arithmetic-geometric mean 
inequality and  (\ref{produto})  we get
\begin{align*}
\varepsilon G^{ij}\sigma_{ij}+NG^{nn} & \geq 
C\sum_{i}f_i + CNf_n 
\\&\geq C n\varepsilon(Nf_1\cdot \ldots \cdot f_n)^{1/n}=C_3N^{1/n}.
\end{align*}
Now we apply this relation into the above inequality to get
 \[
 L[v] \leq C_1+C_2(t+N\delta)+ (C_2(t+N\delta)-2\varepsilon)G^{ij}\sigma_{ij}-C_3N^{1/n}.
 \]
Since  $\delta^2\leq t\delta/N$ implies $t\delta-N/2\delta^2\geq 0$ and $u\geq\underline u,$ 
we choose $t=\frac{\varepsilon}{2C_2}$ and $\delta\leq \frac{t}{N}$ to get 
$v\geq0$ on $\Omega \cap\partial\Omega_\delta$.  With this choice we  have
 \[
 L[v] \leq C_1-\varepsilon G^{ij}\sigma_{ij}-C_3N^{1/n}.
 \]
By choosing $N$ large such that  $C_3N^{1/n}\geq C_1+2\varepsilon$
we obtain (\ref{LV<0}).
\end{proof}
\begin{remark}
Under the hypotheses of Theorem \ref{teorema1} we 
construct a subsolution $w$ defined in $\Omega_\delta$ and 
that is not necessarily strictly convex but satisfies 
$\nabla^2w\in\Gamma(\nabla u).$ We replace $\underline u$ by $w$ in the Lemma
 above to get the result. See Remark \ref{remark-1}.
\end{remark}

\begin{remark}
\label{quotient-rem}
We claim that the quotients $S_{k,l}^{\frac{1}{k-l}}$ with 
$1\leq l < k\leq n$ satisfy (\ref{produto}) in $\Gamma_\Psi$. 
For proving that, we first observe that
\[
\frac{\partial}{\partial\lambda_i}  S_{k,l}^{\frac{1}{k-l}}= \frac{1}{k-l}S_{k,l}^{\frac{1}{k-l}-1}
\frac{\partial S_{k,l}}{\partial \lambda_i}\geq C_1 \frac{\partial S_{k,l}}{\partial \lambda_i}
\]
for some positive constant $C_1 = C_1(\Psi_0,\Psi_1)$. 
Therefore in order to prove the claim, it suffices to verify that
\begin{equation}
\label{ineq-aux}
\bigg(\frac{\partial S_{k,l}}{\partial \lambda_1}\cdot \ldots\cdot 
\frac{\partial S_{k,l}}{\partial \lambda_n}\bigg)^{\frac{1}{n}}\geq C_0,
\end{equation}
for some positive constant $C_0$.  Notice that
\begin{eqnarray*}
\frac{\partial S_{k,l}}{\partial \lambda_i} & = & 
\frac{S_l \frac{\partial S_{k}}{\partial \lambda_i} - 
S_k \frac{\partial S_{l}}{\partial \lambda_i} }{S_l^2}\\
& = & \frac{\big(\frac{\partial S_{l+1}}{\partial \lambda_i} +
\lambda_i \frac{\partial S_{l}}{\partial \lambda_i}\big) 
\frac{\partial S_{k}}{\partial \lambda_i} - \big(\frac{\partial S_{k+1}}{\partial \lambda_i} +
\lambda_i \frac{\partial S_{k}}{\partial \lambda_i}\big) \frac{\partial S_{l}}{\partial \lambda_i} }{S_l^2}\\
& = & \frac{\frac{\partial S_{l+1}}{\partial \lambda_i}  \frac{\partial S_{k}}{\partial \lambda_i} - 
\frac{\partial S_{k+1}}{\partial \lambda_i} \frac{\partial S_{l}}{\partial \lambda_i} }{S_l^2}\\
& \geq & \frac{n(k-l)}{k(n-l)}\frac{\frac{\partial S_{l+1}}{\partial\lambda_i}
\frac{\partial S_{k}}{\partial\lambda_i}}{S^2_l},
\end{eqnarray*}
where have used the generalized Newton-Maclaurin inequalities for the last step (for details, see
\cite{Spruck-fully},  p. 14 or \cite{TRU1}). Since 
\[
\left(\frac{1}{{n\choose k}} S_k(\lambda)\right)^{\frac{1}{k}}\leq
\left(\frac{1}{{n \choose l}}S_{l}(\lambda)\right)^{\frac{1}{l}}, 
\quad \lambda \in \Gamma_k\subset \Gamma_l,
\]
for $1\leq l < k\leq n,$ we conclude from  \eqref{elipticidade} and \eqref{gamma-longe-zero}  that
\begin{eqnarray*}
\frac{\partial S_{k,l}}{\partial \lambda_i} \geq  \frac{n(k-l)}{k(n-l)}\frac{\frac{\partial S_{l+1}}
{\partial\lambda_i}\frac{\partial S_{k}}{\partial\lambda_i}}{{n \choose l}\left(\frac{\delta}{n}\right)^{2l}}.
\end{eqnarray*}
Now, since that the cone for $f=S_{k,l}^{\frac{1}{k-l}}$ is 
$\Gamma_k$ and $\Gamma_l \subset \Gamma_k$ for $l<k$ we have 
that there exist positive constants $C_1$ and $C_2$ such that (see, for instance, \cite{GUAN-2} or \cite{YYLI})
\begin{equation*}
\bigg(\frac{\partial S_{l+1}}{\partial \lambda_1}
\cdot \ldots\cdot \frac{\partial S_{l+1}}{\partial \lambda_n}\bigg)^{\frac{1}{n}}\geq C_1
\end{equation*}
 and 
\begin{equation*}
\bigg(\frac{\partial S_{k}}{\partial \lambda_1}\cdot \ldots\cdot \frac{\partial S_{k}}{\partial
 \lambda_n}\bigg)^{\frac{1}{n}}\geq C_2
\end{equation*}
for all $\lambda\in \Gamma_\Psi$. We conclude that (\ref{ineq-aux}) 
holds for all $\lambda\in \Gamma_\Psi$. This proves the claim.
\end{remark}

\subsection{Mixed Second Derivative Boundary Estimate}
We define the function
\begin{equation}
\label{CARA-H}
h=\tilde{w}+b_0 \rho^2+c_0v,
\end{equation}
where $b_0$ and $c_0$ are constants to be chosen later. Assume  the
vector field $\xi$ is tangent along $\partial\Omega\cap\partial\Omega_\delta.$ 
Hence $\tilde{w}=0$ on $\partial\Omega\cap\partial\Omega_\delta.$ 
Then, since $v\geq 0$ on $\partial\Omega_\delta,$
if $\delta>0$ is small enough and $b_0$ is sufficiently large we have
$h\geq 0$ on  $\partial\Omega_\delta.$ On the other hand, it follows from (\ref{RHO-2}), 
(\ref{LW-TIL}) and (\ref{LV<0}) that
\begin{align*}
L[h]&=L[\tilde{w}]+b_0L[\rho^2]+c_0L[v] 
\\ & \leq (C+Cb_0-c_0\epsilon)(1+G^{ij}\sigma_{ij})+Cb_0.
\end{align*}
Therefore, for $c_0\gg b_0\gg 1$ both sufficiently large, we get $L[h] \leq 0$ in $\Omega_\delta$
and $h\geq 0$ on $\partial\Omega_\delta.$ It follows from the maximum principle that $h\geq 0$ in 
$\Omega_\delta.$ Consequently,
\[
\nabla_\nu h(x_0)\geq 0,
\]
which give us
\[
u_{\xi;\nu}(x_0) \geq -\langle \nabla u,
\nabla_\nu\xi\rangle(x_0)-\frac{c_0}{a_0}(u-\underline u)_\nu(x_0)-\frac{c_0}{a_0}t,
\]
where $\nu$ is the interior unit normal vector field to $\partial\Omega.$
Replacing $\xi$ by $-\xi$ at the definition of $w$ 
we establish a bound  for the 
mixed normal-tangential derivatives on $\partial\Omega$
\[
|u_{\xi;\nu }(x_0)|\leq C,
\]
for any direction tangent $\xi$ to $\partial\Omega.$ Since $x_0$ is arbitrary, we have
\begin{equation}
\label{TANGENTE-NORMAL}
|u_{\xi;\nu}|<C \hspace*{.5cm}\,\, \partial\Omega.
\end{equation}

\subsection{Double Normal Second Derivative Boundary Estimate}
For the pure normal second derivative,  since 
$\sum_i\kappa_i[u]\geq \delta_0>0,$
we need only to derive an upper bound
\begin{equation}
\label{NORMAL-NORMAL}
u_{\nu;\nu} \leq C \hspace*{.5cm} \textrm{on}\,\, \partial\Omega.
\end{equation}

First we  note that the equality 
$u=\varphi$ on $\partial \Omega$ implies 
\begin{align}
\label{RELACAO-U-PHI}
u_{\xi;\eta}(y)&=\varphi_{\xi;\eta}(y)-u_\nu(y)  \Pi(\xi, \eta)(y),
\end{align}
for any tangent vectors $\xi, \eta \in T_y(\partial\Omega)\subset T_yM,$ 
$y \in \partial\Omega,$ where
$\Pi$ denotes the second fundamental form of $\partial\Omega.$
Let $T_u$ be the $(0,2)$ tensor defined on $\partial\Omega$ by
\begin{equation}
\label{DEF-T}
T_u=\big(\tilde\nabla^2\varphi-u_\nu \Pi\big),
\end{equation}
where $\tilde\nabla$ is the induced connection on $\partial\Omega.$
Since $a_{\alpha\beta}=\frac{1}{W}u_{\alpha;\beta},$ it follows from
the equality (\ref{RELACAO-U-PHI}) that the
components of $T_u$ in terms of coordinates $(y^\alpha)$ in $\partial\Omega$ are $W  a_{\alpha\beta}.$
We denote by $\tilde\kappa=(\tilde\kappa_1, \ldots ,\tilde\kappa_{n-1})$
the eigenvalues of the tensor $T_u$ with respect to the inner product defined
on $\partial\Omega$ by the matrix $\tilde g=\tilde\sigma+\tilde{\nabla}\varphi\otimes
\tilde{\nabla}\varphi,$ where $\tilde\sigma$ is the metric on $\partial\Omega$ induced 
by $\sigma.$ 

Let $\Gamma'$ be the projection of $\Gamma$ on $\mathbb{R}^{n-1}.$ We denote by $d(\kappa')$ 
the distance from $\kappa'\in \Gamma'$ to $\partial\Gamma'.$
We point out that $\Gamma'$ is also an open convex symmetric cone. 

We are going to analyze the behavior of $d(\kappa'[u])$ for admissible solutions $u.$
First we fix Fermi coordinates $(y^i)$ 
in $M$ along $\partial\Omega,$  such that  
$y^n$ is the normal coordinate and the tangent 
coordinate vectors $\{\frac{\partial}{\partial y^\alpha}|_{y_0}\},$  $1\leq\alpha\leq n-1,$ 
is an orthonormal basis of eigenvectors that diagonalize 
$T_u$ at the given $y_0\in\partial\Omega,$ 
with respect to the inner product $\tilde g=\tilde\sigma+\tilde{\nabla}\varphi\otimes
\tilde{\nabla}\varphi$. At $y_0$ the matrix of the second fundamental form
of $\Sigma$ in terms of this coordinate system is given by
\begin{align}
\label{matrix-second-form}
 a_{ij}  = \frac{1}{W}u_{i;j}\delta_{ij}, \, (i,j<n), \hspace*{.5cm}
 a_{in} = \frac{1}{W}u_{\nu ;i}, \, (i<n), \hspace*{.5cm}
 a_{nn} = \frac{1}{W} u_{\nu;\nu}.
\end{align}

It follows from (\ref{RELACAO-U-PHI}) that $\tilde\kappa=( u_{1;1}, \ldots,  u_{n-1;n-1})$ 
are also the eigenvalues of the tensor $T_u$ defined above. 
Since the principal curvatures $\kappa[u]=(\kappa_1,\ldots,\kappa_n )$ of $\Sigma$
at $\big(y_0,u(y_0)\big)$ are the roots of the equation 
$\det\big(a_{ij}-\kappa g_{ij}\big)=0$
and $g_{\alpha\beta}(y_0)=\tilde{g}_{\alpha\beta}(y_0)=\delta_{\alpha\beta}$ for 
$1\leq\alpha,\beta\leq n-1,$ 
they satisfy
\begin{align*}
\det \left(\begin{array}{cccc}
\frac{1}{W}u_{1;1}-\kappa & 0 & \cdots & \frac{1}{W}u_{1;\nu}-g_{1n} \\ 
0 & \frac{1}{W}u_{2;2}-\kappa & \cdots  &  \frac{1}{W}u_{2;\nu}-g_{2n}\\ 
\vdots & \,  & \ddots & \vdots \\ 
\frac{1}{W}u_{\nu;1}-g_{1n}  & \frac{1}{W}u_{\nu;1}-g_{2n}  & \cdots & \frac{1}{W}u_{\nu;\nu}-
\kappa g_{nn}
\end{array} \right)=0.
\end{align*}
Therefore, by Lemma 1.2 of \cite{CNSIII}
the principal curvatures $\kappa[u](y)=(\kappa_1, \ldots ,\kappa_n)$
of $\Sigma,$ at $\big(y_0,u(y_0)\big),$ behave like
\begin{align}
\label{ki}\kappa_\alpha &=\frac{1}{W}u_{\alpha;\alpha}+o(1), \hspace*{2cm}  1\leq\alpha\leq n-1,\\
\label{kn}\kappa_n &=\frac{1}{Wg_{nn}}u_{\nu;\nu}\left(1+O
\left(\frac{1}{u_{\nu;\nu}}\right)\right),
\end{align}
as $|u_{\nu;\nu}|\rightarrow\infty.$ Since $u$ is admissible, we have $\kappa'[u]
=(\kappa_\alpha)\in\Gamma',$ therefore $W\kappa'[u]\in\Gamma'.$
Hence, for $u_{\nu;\nu}$  large we have $\tilde\kappa=( u_{1;1}, \ldots,  u_{n-1;n-1})
\in\Gamma',$ since $\Gamma'$ is open and 
we can assume $u_{\nu;\nu}\geq 0.$ Since $y_0\in\partial\Omega$ is arbitrary,
it follows from the gradient, tangent and tangent-normal second derivative estimates 
previously established that there exists a uniform positive 
constant $N_0>0$ such that the eigenvalues $\tilde\kappa$
of $T_u$ satisfy 
\begin{equation}
\label{N-0}
\tilde\kappa\in\Gamma', \hspace*{.5cm} \textrm{if} \hspace*{.5cm} u_{\nu;\nu}\geq N_0.
\end{equation}
The following lemma is the key ingredient to obtain the double normal boundary estimate. 
It is an adaption of the technique used in \cite{CNSIII} and \cite{GUAN-2},
using the brilliant idea introduced by Trundiger in \cite{TRU-2}.

\begin{lemma}
\label{LONGE-BORDO}
Let $N_0>0$ be the constant defined in (\ref{N-0}) and suppose $u_{\nu;\nu}\geq N_0.$
Then there exists a uniform constant $c_0>0$ such that
\[
d(y)=d(\tilde\kappa[u](y))\geq c_0 \hspace*{.5cm} \textrm{on}\,\, \partial\Omega.
\]
\end{lemma}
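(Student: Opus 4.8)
The plan is to argue by contradiction, exploiting the definition (\ref{N-0}) of $N_0$ together with the boundary gradient and tangential/mixed-tangential estimates already established. Suppose the conclusion fails; then there is a sequence of admissible solutions $u$ (or points $y_0\in\partial\Omega$) with $u_{\nu;\nu}(y_0)\geq N_0$ but $d\big(\tilde\kappa[u](y_0)\big)\to 0$. By (\ref{N-0}) the eigenvalues $\tilde\kappa=(u_{1;1},\ldots,u_{n-1;n-1})$ lie in the open cone $\Gamma'$, so $d(\tilde\kappa)\to 0$ forces $|\tilde\kappa|\to\infty$: indeed all the tangential data $u_{\alpha;\beta}$ are controlled by $|\varphi|_2$, $|\nabla u|$ and $\Pi$ via (\ref{RELACAO-U-PHI}), hence are \emph{bounded}; since $\Gamma'$ is a cone with vertex at the origin, a bounded point of $\Gamma'$ cannot have distance to $\partial\Gamma'$ tending to zero unless it approaches a boundary ray, i.e. unless the whole vector escapes to infinity — which it cannot. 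So already we get a uniform lower bound $d(\tilde\kappa)\geq c_0>0$ purely from the uniform bound on the tangential second derivatives plus openness of $\Gamma'$.

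\emph{More carefully}, I would phrase it as follows. The map $y\mapsto T_u(y)$, viewed through Fermi coordinates, has entries $\tilde\nabla^2\varphi-u_\nu\Pi$ which, by the $C^1$-estimate on $u$ (Proposition \ref{estimativa-gradiente}), the mixed estimate (\ref{TANGENTE-NORMAL}) and the regularity $\partial\Omega\in C^{4,\alpha}$, $\varphi\in C^{4,\alpha}$, are bounded by a uniform constant $C_1$ depending only on the known data. Hence the eigenvalues $\tilde\kappa=(\tilde\kappa_1,\ldots,\tilde\kappa_{n-1})$ satisfy $|\tilde\kappa|\leq C_1$. Now consider the set $K=\{\kappa'\in\overline{\Gamma'}: |\kappa'|\leq C_1\}\cap\{\text{those }\kappa'\text{ arising as }\tilde\kappa[u]\text{ with }u_{\nu;\nu}\geq N_0\}$; by (\ref{N-0}) every such $\kappa'$ lies in the \emph{open} cone $\Gamma'$, and since $d$ is continuous and positive on $\Gamma'$, it attains a positive minimum $c_0$ on the compact set $\overline{K}\subset\Gamma'$. (One must check $\overline K\subset\Gamma'$: a limit point $\kappa'_\infty\in\partial\Gamma'$ would have to be a limit of $\tilde\kappa[u]$'s with $u_{\nu;\nu}\geq N_0$; but the estimates are uniform, so we may pass to the limit in $u$ along a convergent subsequence in $C^2$, obtaining an admissible solution whose double normal derivative is $\geq N_0$ — contradicting (\ref{N-0}) which guarantees $\tilde\kappa\in\Gamma'$, i.e. $\kappa'_\infty\in\Gamma'$.) This yields $d(\tilde\kappa[u](y))\geq c_0$ for all $y\in\partial\Omega$ with $u_{\nu;\nu}(y)\geq N_0$, which is the assertion.

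\emph{The main obstacle} I anticipate is making the compactness/limit argument fully rigorous when $M$ is a general Riemannian manifold rather than Euclidean space: one needs the a priori estimates (height, gradient, tangential and mixed second derivatives) to be genuinely \emph{uniform} across the relevant family so that a normal-families argument applies, and one needs to be careful that the auxiliary tensor $\tilde g=\tilde\sigma+\tilde\nabla\varphi\otimes\tilde\nabla\varphi$ (with respect to which $\tilde\kappa$ is computed) stays uniformly elliptic — which it does, since $\tilde\sigma$ is fixed and $|\tilde\nabla\varphi|$ is controlled. An alternative, more quantitative route that avoids compactness is to note directly that if $\kappa'\in\Gamma'$ with $|\kappa'|\leq C_1$ and $d(\kappa')<\varepsilon$ then $\kappa'$ lies within $\varepsilon$ of the boundary ray, forcing some appropriate $S_j(\kappa')$ (or the defining quantity of $\Gamma'$) to be small; combined with the condition (\ref{N-0}) and the structure conditions (\ref{elipticidade})--(\ref{produto}) — in particular (\ref{gamma-longe-zero}) and (\ref{produto}) — one derives a contradiction for $\varepsilon$ small enough. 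Either way the heart of the matter is that \emph{the tangential part of the Hessian of an admissible solution is a priori bounded}, so its eigenvalues live in a fixed compact subset of the open cone $\Gamma'$, and there $d$ is bounded below.
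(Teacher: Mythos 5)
There is a genuine gap, and it sits exactly at the heart of what the lemma is supposed to prove. Your first step — that the tangential data $u_{\alpha;\beta}=\varphi_{\alpha;\beta}-u_\nu\Pi_{\alpha\beta}$ on $\partial\Omega$ are uniformly bounded, so $|\tilde\kappa|\leq C_1$ — is correct, but the conclusion you draw from it is false: a bounded subset of the \emph{open} cone $\Gamma'$ is in general not at positive distance from $\partial\Gamma'$. For instance, in the positive cone the points $(\varepsilon,1,\ldots,1)$ stay bounded while their distance to the boundary tends to $0$ with $\varepsilon$; boundedness forces nothing about closeness to a ``boundary ray at infinity''. So the statement that $d(\tilde\kappa)\to 0$ would force $|\tilde\kappa|\to\infty$ is simply wrong, and with it the ``purely from boundedness plus openness'' version of the argument collapses. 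The fallback compactness argument is circular: to pass to a $C^2$-convergent subsequence of solutions you need uniform two-sided second-derivative bounds (indeed $C^{2,\alpha}$ bounds for Arzel\`a--Ascoli), but an upper bound on $u_{\nu;\nu}$ is precisely what this section is in the process of establishing — under the hypothesis of the lemma you only know $u_{\nu;\nu}\geq N_0$, with no a priori upper control. Moreover condition (\ref{N-0}) is purely qualitative (membership in the open cone $\Gamma'$), so even applied to a hypothetical limit it yields no uniform constant $c_0$ of the kind the continuity method requires.

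The missing quantitative mechanism, which the paper supplies, is this: work at a point $y_0$ where $d(\tilde\kappa[u](\cdot))$ attains its minimum on $\partial\Omega$, and use Lemmas 6.1 and 6.2 of \cite{CNSIII} to produce a fixed vector $\gamma'$ with $d(y_0)=\sum_\alpha\gamma_\alpha u_{\alpha;\alpha}(y_0)$ and $\sum_\alpha\gamma_\alpha u_{\alpha;\alpha}(y)\geq d(y_0)$ nearby. Combining this with (\ref{RELACAO-U-PHI}) and the locally strictly convex subsolution $\underline u$ (which either forces $d(y_0)\geq \tfrac12 d(\kappa'(\underline u_{\alpha;\beta}(y_0)))$ outright, or gives $\sum_\alpha\gamma_\alpha b_{\alpha\alpha}\geq c>0$ near $y_0$), one obtains an upper barrier $\mu$ as in (\ref{CARA-MU}) with $u_\nu\leq\mu$ near $y_0$ and equality at $y_0$; rerunning the mixed-derivative barrier construction then bounds $u_{\nu;\nu}(y_0)\leq C$. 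Only at that point does the equation enter decisively: since $\kappa[u](y_0)$ now lies in an a priori bounded subset of $\Gamma$ and $f(\kappa[u])=\Psi\geq\Psi_0>0$, the structure condition (\ref{limite}) forces $\mathrm{dist}(\kappa[u](y_0),\partial\Gamma)\geq c_0$, hence $d(y_0)\geq c_0$, and the minimality of $y_0$ propagates this to all of $\partial\Omega$. Your proposal never uses (\ref{limite}), the equation $f(\kappa[u])=\Psi$, or the subsolution, and these are exactly the ingredients that convert the qualitative statement (\ref{N-0}) into the uniform lower bound claimed by the lemma.
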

\begin{proof}
Consider a point $y_0\in\partial\Omega$ where the function $d(y)$ 
attains its minimum in $\partial\Omega.$
It suffices to prove that $d(y_0)\geq c_0>0.$ As above we fix Fermi coordinates $(y^i)$ 
in $M$ along $\partial\Omega,$ centered at $y_0,$ such that  
$y^n$ is the normal coordinate and the tangent 
coordinate vectors $\{\frac{\partial}{\partial y^\alpha}|_{y_0}\}_{\alpha<n}$ 
diagonalize $T_u$ at $y_0$ with respect to the inner product
given by $\tilde\sigma+\tilde{\nabla}\varphi\otimes
\tilde{\nabla}\varphi.$  We choose indices such that
\[
\tilde\kappa_1(y_0)\leq \cdots\leq \tilde\kappa_{n-1}(y_0).
\]
From (\ref{RELACAO-U-PHI}) the coordinate system $(y^\alpha)$
diagonalizes also the restriction of
$\nabla^2 u$ to $T(\partial\Omega)$ at $y_0$ and
\begin{equation}
\label{RELACAO-U-PHI-2}
\tilde \kappa_{\alpha}(y_0)= u_{\alpha;\alpha}(y_0),  \hspace*{.5cm} \alpha<n.
\end{equation}
We extend $\nu$ to the coordinate neighborhood  
by taking its parallel transport along normal geodesics departing from $\partial\Omega$
and set
\[
b_{\alpha\beta}=\Pi\left( \frac{\partial}{\partial y^\alpha}, \frac{\partial}{\partial y^\beta}
\right)=\left \langle\nabla_{\frac{\partial}{\partial y^\alpha}}\frac{\partial}{\partial y^\beta},
\nu\right\rangle.
\]
Using Lemma 6.1 of \cite{CNSIII}, we find a vector $\gamma'=(\gamma_1,\ldots , \gamma_{n-1})
\in\mathbb{R}^{n-1}$ such that
\begin{align*}
\gamma_1\geq \cdots \geq\gamma_{n-1}\geq 0,\, \hspace*{.5cm}\sum_{\alpha<n}\gamma_\alpha=1
\end{align*}
and
\begin{equation}
\label{D-X-0}
d(y_0)=\sum_{\alpha<n}\gamma_{\alpha}\tilde{\kappa}_\alpha(y_0) =
 \sum_{\alpha<n}\gamma_{\alpha} u_{\alpha;\alpha}(y_0).
\end{equation}
Moreover,
\begin{equation}
\label{CASA-GAMA-LINHA}
\Gamma'\subset \left\{\lambda'\in\mathbb{R}^{n-1}\, :\, \gamma'\cdot \lambda'>0  \right\}.
\end{equation}
Applying Lemma 6.2 of \cite{CNSIII}, with $\gamma_n=0,$ we get for all $y\in\partial
\Omega$ near $y_0$ 
\begin{align}
\label{DES-T}
\sum_{\alpha<n}\gamma_\alpha T_{\alpha\alpha}(y)=\sum_{\alpha<n}\gamma_\alpha 
u_{\alpha;\alpha}(y) \geq \sum_{\alpha<n}\gamma_\alpha \tilde{\kappa}_{\alpha}(y)\geq d(y)
\geq d(y_0),
\end{align}
where we have used (\ref{CASA-GAMA-LINHA}) and $|\gamma|\leq 1$ in the second inequality. 
Differentiating covariantly the equality $u-\varphi=0$ on $\partial\Omega$ we get
\begin{equation}
\label{RELACAO-U-UNDO-U}
(u-\varphi)_{\xi;\eta}=-(u-\varphi)_\nu \Pi(\xi, \eta) \hspace*{.5cm}
\textrm{on}\,\, \partial\Omega,
\end{equation}
for any vectors fields $\xi$ and $\eta$ tangent to $\partial\Omega.$ 
Then, for $y\in\partial\Omega$ near $y_0,$ we have
\begin{align*}
u_\nu(y)\sum_{\alpha<n}\gamma_\alpha b_{\alpha\alpha} (y)&= \sum_{\alpha<n}
\gamma_\alpha(\varphi-u)_{\alpha;\alpha}(y).
\end{align*}
Then
\begin{align}
\label{CONTA-U-PHI}
\begin{split}
u_\nu (y)\sum_{\alpha<n}\gamma_\alpha b_{\alpha\alpha}(y)
&= \sum_{\alpha<n}\gamma_\alpha \varphi_{\alpha;\alpha}(y)
-\sum_{\alpha<n}\gamma_\alpha u_{\alpha;\alpha}(y)
\\ &\leq \sum_{\alpha<n}\gamma_\alpha\varphi_{\alpha;\alpha}(y)-d(y_0),
\end{split}
\end{align}
where we use (\ref{DES-T}) in the last inequality. \\
Since $\underline u$ is locally strictly convex in 
a neighborhood of $\partial\Omega$ it follows that
$\kappa'(\underline u_{\alpha;\beta}(y_0))$  belongs to $\Gamma'$
(since $\Gamma^+\subset\Gamma$). We point out that
$\kappa'(\underline u_{\alpha;\beta})$ denotes the eigenvalues
of $\nabla^2\underline u.$ 
We may assume 
\[
d(y_0)<\frac{1}{2}d(\kappa'(\underline u_{\alpha;\beta}(y_0)),
\]
otherwise we are done. Now we use the equality $ u=\underline u$ on $\partial\Omega$
to get
\begin{align*}
(u-\underline u)_\nu\sum_{\alpha<n}\gamma_\alpha b_{\alpha\alpha}
= \sum_{\alpha<n}\gamma_\alpha(\underline u-u)_{\alpha;\alpha},
\end{align*}
on $\partial\Omega.$ Therefore we conclude from
(\ref{RELACAO-U-UNDO-U}), (\ref{CASA-GAMA-LINHA}) and 
Lemma 6.2 of \cite{CNSIII} that
\begin{align*}
(u-\underline u)_\nu(y_0)\sum_{\alpha<n}\gamma_\alpha b_{\alpha\alpha}(y_0)&=
\sum_{\alpha<n}\gamma_\alpha\underline u_{\alpha;\alpha}(y_0)
-\sum_{\alpha<n}\gamma_\alpha u_{\alpha;\alpha}(y_0)
\\ & \geq d\big(\kappa'(\underline{u}_{\alpha;\beta}(y_0)\big)-d(y_0) 
> \frac{1}{2} d\big(\underline{u}_{\alpha;\beta}(y_0)\big)>0.
\end{align*}
Since $(u-\underline u)_\nu\geq 0$ on
$\partial\Omega,$  there exist uniform positive
constants $ c,\delta>0,$ such that
\[
\sum_{\alpha<n}\gamma_\alpha b_{\alpha\alpha}(y)\geq c>0, 
\]
for every $y\in \Omega$ satisfying $\textrm{dist}(y, y_0)<\delta.$
Hence we may define the function
\begin{align}
\label{CARA-MU}
\mu(y) =\frac{1}{\sum_{\alpha<n}\gamma_\alpha b_{\alpha\alpha}(y)}
\left(\sum_{\alpha<n}\gamma_\alpha\varphi_{\alpha;\alpha}(y)-d(y_0)\right),
\end{align}
for $y\in \Omega_{\delta}=\{ x\in\Omega\, :\, \rho(x)=\textrm{dist}(x,y_0)<\delta\}.$
It follows from (\ref{CONTA-U-PHI}) that $ u_\nu \leq \mu$
on $\partial\Omega\cap\partial\Omega_{\delta}$ 
while (\ref{D-X-0}) and (\ref{RELACAO-U-UNDO-U}) imply
$u_\nu (y_0) = \mu(y_0).$ Now we may proceed as it was done for
the mixed normal-tangential derivatives to get the estimate $u_{\nu;\nu}(y_0)
\leq C,$ for a uniform constant $C.$
In fact,  at the definition of the function $w$ in (\ref{CARA-W}) we can choose
the vector field $\xi$ as being an extension of $\nu$ and change the function $\mu$ there
by the function $\mu$ defined on 
(\ref{CARA-MU}). Defining $\tilde w$ 
in the same way as in (\ref{w-til}), the inequality
(\ref{LW-TIL})  remains true, hence the function $h$
defined at equation (\ref{CARA-H}) still satisfies $L[h]\leq 0$ in $\Omega_{\delta}$
and $h\geq 0$ on $\partial\Omega_{\delta}\cap\Omega,$ for
appropriate constants $a_0,b_0,c_0$ and $\delta>0$ sufficiently
small. To get the inequality $h\geq 0$ on $\partial\Omega_{\delta}\cap
\partial\Omega$ we must  use that $u_\nu \leq \mu$ on 
$\partial\Omega\cap\partial\Omega_{\delta}.$ 
Then, like it was done for 
the mixed normal-tangential derivatives case we get 
\begin{equation}
\label{U-NUNU-X-0}
u_{\nu;\nu}(y_0)
\leq C.
\end{equation}

Therefore  $\kappa[u](y_0)$ is contained in an
{\it a priori} bounded subset of $\Gamma.$ Since
\[
F[u]=f(\kappa[u])=\Psi\geq \Psi_0=\inf \Psi >0
\]
it follows from (\ref{limite}) that
\[
\textrm{dist}(\kappa[u](y_0),\partial\Gamma)\geq c_0>0
\]
for a uniform constant $c_0>0.$ Thus $d(y_0)\geq c_0,$
for a uniform constant $c_0>0.$
\end{proof}
We are now in position to prove (\ref{NORMAL-NORMAL}). 
We assume that
$u_{\nu;\nu}\geq N_0,$ where $N_0$ is the
uniform constant defined above (otherwise we are done). 
By our choice of $N_0$ we have $\tilde\kappa[u]\in\Gamma'$ on $\partial\Omega,$ where
$\tilde\kappa$ are the eigenvalues of the tensor $T_u$
defined in (\ref{DEF-T}).
Fixed $y\in\partial\Omega,$ we choose Fermi coordinates 
centered at $y$ as it was done in (\ref{matrix-second-form}) to conclude that
$\tilde\kappa[u](y)=(u_{1;1}, \ldots , u_{n-1;n-1})$
are the eigenvalues of $T_u$ and the principal curvatures 
$\kappa[u](y)=(\kappa_1, \ldots ,\kappa_n)$
of $\Sigma,$ at $(y,u(y)),$ behave as is described in (\ref{ki}) and (\ref{kn}).
Therefore, since $\frac{1}{W}\tilde\kappa[u]\in\Gamma',$ there exists a uniform constant 
$N_1$ such that if $u_{\nu;\nu}(y)\geq N_1$ then
the distance of $\kappa'[u]=\kappa'\big(a_i^j[u]\big)(y)$ to $\partial\Gamma'$ 
is greater then $c_0/2,$ where $c_0$ is the constant at Lemma \ref{LONGE-BORDO}. Thus
\[
d\Big(\kappa'[u](y)\Big)\geq \frac{c_0}{2},
\]
for $y\in\Lambda=\{y\in\Omega\, :\, u_{\nu;\nu}(y)\geq N_1\}.$
On the other hand, it follows from (\ref{elipticidade}) that there exists a uniform constant $
\delta_0>0$ such that 
\begin{align}
\label{f-infty}
\lim_{t\rightarrow\infty }f(\kappa'[u](y), t)\geq \Psi(y,u)+\delta_0
\end{align}
uniformly for $y\in \Lambda,$ then we have a uniform upper bound
$\kappa_n[u](y) \leq C$ for $y\in \Lambda.$ This yields a uniform upper bound
$u_{\nu;\nu}(y)\leq C$ for $y\in \Lambda$ and establishes (\ref{NORMAL-NORMAL}).


\section{Global bounds for the second derivatives}
\label{section6}

This section is devoted to the proof of the global Hessian estimate. 
We will show that the terms of the second fundamental form $b$ of the graph of $u$ are bounded by
above. Combined with (\ref{gamma-longe-zero})
(see Section \ref{section2}), this provides us with uniform bounds for $b$.
Since we already have the $C^1$ estimate, then this
information allows us to obtain the global second derivative estimate.

\begin{proposition}
\label{estimativa-hessiano}
Suppose that conditions (\ref{elipticidade})-(\ref{produto}) hold and that there exists
a locally strictly convex function $\chi\in C^2(\overline{\Omega}).$
Let $u\in C^4(\Omega)\cap C^2(\overline{\Omega})$ be an admissible solution of 
(\ref{PD-EQ}). Then
\begin{equation}
\label{eq-estimativa-hessiano}
|\nabla^2 u|\leq C \hspace*{.5cm}\textrm{in}\,\,\overline{\Omega},
\end{equation}
where $C$ depends on $|u|_1,$ $\max_{\partial\Omega}|\nabla^2u|,$   
$|\underline{u}|_2$ and other known data.
\end{proposition}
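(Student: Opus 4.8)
The plan is to reduce (\ref{eq-estimativa-hessiano}) to an \emph{upper} bound for the largest principal curvature $\kappa_{\max}=\kappa_{\max}[u]$ of the graph $\Sigma$. Indeed, if $\kappa_{\max}\le C$ then (\ref{gamma-longe-zero}) gives $\kappa_i\ge\delta-\sum_{j\ne i}\kappa_j\ge\delta-(n-1)C$ for every $i$, hence $|\kappa[u]|\le C$ on $\overline\Omega$; since $a_{ij}=\frac1W u_{i;j}$ and $W=\sqrt{1+|\nabla u|^2}$ is already controlled by Proposition \ref{estimativa-gradiente}, this yields $|\nabla^2 u|\le C$. So it suffices to bound $\kappa_{\max}$ from above on $\overline\Omega$.

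To this end I would consider an auxiliary function of the form $\Theta=\log\kappa_{\max}+\Phi$ on $\overline\Omega$, where $\Phi$ is a carefully chosen combination of the controlled lower-order quantities ($W$, $|\nabla u|$, and the given locally strictly convex function $\chi$), the constants being tuned as in \cite{SHENG-URBAS-WANG}; the term involving $\chi$ compensates, on a general manifold, for the absence of a Euclidean support function. If $\Theta$ attains its maximum over $\overline\Omega$ at a boundary point, we are done by the boundary Hessian estimate of Section \ref{section5} together with $\kappa_{\max}\le C|\nabla^2u|$. Otherwise the maximum is attained at an interior point $x_0$; there we pass to the special coordinate system centered at $x_0$ and, by the usual perturbation of the second fundamental form, assume $\kappa_1(x_0)=\kappa_{\max}(x_0)$ is a simple eigenvalue, so that $\kappa_1$ (or a smooth quantity $\le\kappa_{\max}$ agreeing with it at $x_0$) is smooth near $x_0$. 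The critical point condition then reads $\nabla_i\kappa_1/\kappa_1=-\nabla_i\Phi$ at $x_0$, and the maximum principle for the linearized operator $L$ of Section \ref{section5} gives
\[
0\ \ge\ L[\Theta]\ =\ \frac{L[\kappa_1]}{\kappa_1}\ -\ \frac{1}{W\kappa_1^{2}}\,F^{ij}(\kappa_1)_i(\kappa_1)_j\ +\ L[\Phi]\qquad\text{at }x_0 .
\]

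The core of the proof is the differential inequality for $\kappa_1$ at $x_0$. I would differentiate the equation $F(a_i^j[u])=\Psi$ covariantly twice, use the Codazzi equation (\ref{CODAZZI}) and the Simons-type identity (\ref{SIMONS-FORMULA}) to commute derivatives, so that $L[\kappa_1]$ is expressed through $F^{kl}a_{kl;11}=\nabla_1\nabla_1\Psi-F^{ij,rs}a_{ij;1}a_{rs;1}$ plus terms quadratic in the second fundamental form and terms built from the ambient curvature of $M\times\mathbb{R}$ and its covariant derivatives, the latter being uniformly bounded on the compact $\overline\Omega$. By the concavity of $F$ (Lemma \ref{derivada-general-f}) the term $-F^{ij,rs}a_{ij;1}a_{rs;1}$ is nonnegative and can be discarded; what remains is a decisive \emph{good} term $\kappa_1\sum_i f_i\kappa_i^{2}$, a \emph{bad} term $-\kappa_1^{2}\sum_i f_i\kappa_i$ (whose size is controlled via (\ref{relacao_euler}) and (\ref{upper-f})), and error terms of order $\kappa_1\big(1+\sum_i f_i\big)$. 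After dividing by $\kappa_1$, the errors become $O\big(\sum_i f_i\big)+O(1)$ and are absorbed into $L[\Phi]$, using the strict convexity of $\chi$ (which yields $L[\chi]\ge c\sum_i f_i$) and (\ref{soma}). The genuinely delicate point is the interplay between the off-diagonal third-order terms $a_{1i;1}$ ($i>1$) produced by Simons' identity and the gradient term $-\frac{1}{W\kappa_1^{2}}F^{ij}(\kappa_1)_i(\kappa_1)_j$ coming from $\log\kappa_{\max}$: following \cite{SHENG-URBAS-WANG}, I would split the indices into a ``good'' set $\{i:\kappa_i>\varepsilon\kappa_1\}$ and its complement, estimate the bad off-diagonal third derivatives by Codazzi and the critical point relation, exploit the sign $\frac{f_i-f_j}{\kappa_i-\kappa_j}\le 0$ of Lemma \ref{derivada-general-f} on the good pairs, and use the remaining technical hypotheses (\ref{ki-negativo})--(\ref{produto}) to keep $\sum_i f_i\kappa_i^{2}$ and the factors $f_i$ under control for an arbitrary admissible curvature function. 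Carrying this through shows that unless $\kappa_1(x_0)$ is already bounded by a controlled constant, the right-hand side of the displayed inequality is strictly positive, a contradiction. Hence $\kappa_1(x_0)\le C$, so $\Theta\le C$ on $\overline\Omega$, which bounds $\kappa_{\max}$ from above and finishes the proof.

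I expect the main obstacle to be precisely this last bookkeeping step: arranging that every negative contribution --- those from Simons' identity, from the concavity defect of $F$, and from the square of $\nabla\kappa_1$ --- is absorbed either by the reservoir $\kappa_1\sum_i f_i\kappa_i^{2}$ coming from $L[\kappa_1]/\kappa_1$ or by the reservoir $c\sum_i f_i$ coming from $L[\Phi]$, with the constants in $\Phi$ fixed in the correct order, and verifying that conditions (\ref{soma})--(\ref{produto}) are exactly what make this possible in the generality of a curvature function defined on an arbitrary symmetric cone $\Gamma$ rather than on a specific $S_k$ or $S_{k,l}$.
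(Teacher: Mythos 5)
Your strategy is the same as the paper's: the paper maximizes $b(\xi,\xi)\exp\big(\phi(\tau)+\beta\chi\big)$ over the unit tangent bundle of $\Sigma$ (the support function $\tau=\langle N,\partial_t\rangle=1/W$ plays the role of your function of $W$, with $\phi(\tau)=-\ln(\tau-a)$), a boundary maximum is handled by Section \ref{section5}, and at an interior maximum the equation is differentiated twice and combined with Codazzi, Simons' identity and concavity, with an index splitting (the paper splits by $f_j\le 4f_1$ versus $f_j>4f_1$, in the spirit of Guan, rather than by $\kappa_i>\varepsilon\kappa_1$). However, there is a genuine gap in your central bookkeeping: the two quadratic terms produced by Simons' identity have their roles reversed. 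In the diagonalizing frame one has
\[
\sum_i f_i a_{11;ii}=F^{ij}a_{ij;11}+\kappa_1^{2}\sum_i f_i\kappa_i-\kappa_1\sum_i f_i\kappa_i^{2}+\textrm{(bounded ambient curvature terms)}
\]
(compare (\ref{estimativa-hessiano-conta7})), so the decisive \emph{good} term is $+\kappa_1^{2}\sum_i f_i\kappa_i\ge c_0\kappa_1^{2}$ --- this is precisely where (\ref{relacao_euler}) enters, and after dividing by $\kappa_1$ it yields the $c_0\kappa_1$ that ultimately bounds $\kappa_1$ --- whereas $-\kappa_1\sum_i f_i\kappa_i^{2}$ is the dangerous cubic term. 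You assert the opposite assignment, so the mechanism you describe for the final contradiction is not available as stated.

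Moreover, the true bad term, after division by $\kappa_1$, is $-\sum_i f_i\kappa_i^{2}$, which cannot be absorbed by the reservoir $c\sum_i f_i$ coming from $\chi$ and (\ref{soma}); its compensation is exactly the role of the support-function factor: by the choice of $\phi$ one arranges $-(1+\dot\phi\tau)+C\big(\ddot\phi-(1+\epsilon)\dot\phi^{2}\big)\ge\hat C>0$ (inequality (\ref{estimativa-hessiano-conta2})), which turns the combination of $-\sum_i f_i\kappa_i^{2}$, the term $-\dot\phi\tau\sum_i f_i\kappa_i^{2}$ coming from $\dot\phi G^{ij}\tau_{i;j}$, and the terms $\ddot\phi\sum_i f_i|\tau_i|^{2}$, $-(1+\epsilon)\dot\phi^{2}\sum_i f_i|\tau_i|^{2}$ produced by the critical-point relation into the positive reservoir $\hat C\sum_i f_i\kappa_i^{2}$. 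Since your $\Phi$ is only loosely described as a tuned combination of $W$, $|\nabla u|$ and $\chi$, nothing in your outline performs this cancellation, so the step ``the right-hand side is strictly positive unless $\kappa_1$ is already bounded'' is unsupported. A smaller inconsistency: you first discard $-F^{ij,rs}a_{ij;1}a_{rs;1}$ entirely by concavity, yet its off-diagonal part $-\frac{2}{\kappa_1}\sum_{j}\frac{f_1-f_j}{\kappa_1-\kappa_j}(a_{1j;1})^{2}$ is indispensable (together with Codazzi and the critical-point relation) to cancel $-\frac{1}{\kappa_1^{2}}\sum_j f_j|a_{11;j}|^{2}$ in the regime where many $f_j$ greatly exceed $f_1$; you do invoke it later, so there it is a matter of stating the plan consistently. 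Once these points are corrected, your argument becomes the paper's proof.
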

\begin{proof}
First we extend the locally strictly convex function $\chi\in C^2(\overline{\Omega})$ to 
$\overline\Omega\times \mathbb{R}$ by setting 
\[
\chi(x,t)=\chi(x)+t^{2}.
\]
This extension is also locally strictly convex and we use the symbol
$\chi$  also to represent it.
We then  define the following function on the unit
tangent bundle of $\Sigma,$
\[
\tilde{\zeta}(y,\xi)=b(\xi,\xi)\exp \big(\phi(\tau(y))+\beta\chi(y)\big),
\]
where $y\in\Sigma,$ $\xi$ is a unit tangent vector to $\Sigma$ at $y,$ the function 
$\tau$ is the support function defined on $\Sigma$ by $\tau=\langle N, \partial_t\rangle,$ 
$\beta>0$ is a constant to be chosen later and $\phi$ is a real function defined as follows.
The function $\tau$ is bounded by constants 
depending on the bound for  $|\nabla u|.$  Hence, it is possible to choose $a>0$ so that
$\tau\geq 2a$. Thus,  we define
\begin{equation*}
\phi(\tau)=-\ln(\tau-a).
\end{equation*}
Differentiating with respect to $\tau,$ 
\begin{equation}
\label{estimativa-hessiano-conta1}
\ddot\phi-(1+\epsilon)\dot\phi^{2}=\frac{1}{(\tau-a)^{2}}-\frac{1
+\epsilon}{(\tau-a)^{2}}=-\frac{\epsilon}{(\tau-a)^{2}}<0,
\end{equation}
for any positive constant $\epsilon>0.$ Notice that, by  the choice of $a,$ given an arbitrary  
positive constant $C$, we have
\begin{align}
\label{estimativa-hessiano-conta2}
\begin{split}
 -(1+\dot\phi\tau)+C(\ddot\phi-(1+\epsilon)\dot\phi^{2})
=   -1+\frac{\tau}{\tau-a}-\frac{c_{1}\epsilon}{(\tau-a)^{2}}
 \geq \frac{a^{2}}{2(\tau-a)^{2}} \ge \hat C,
 \end{split}
\end{align}
for some positive constant $\hat C$ depending on the bound for $|\nabla u|$.

If the maximum of  $\tilde{\zeta}$ is achieved on $\partial\Sigma,$
we can estimate it in terms of uniform constants (see the last section)
and we are done.  Thus, suppose the maximum of  $\tilde{\zeta}$ is attained at a
point $y_0=\left(x_0,u(x_0)\right)\in\Sigma,$ with $x_0\in\Omega,$ and along 
the direction $\xi_0$ tangent to $\Sigma$ at
$y_0=\left(x_0, u(x_0)\right)$. We fix a normal coordinate system $(y^i)$
of $\Sigma$ centered at $y_0$ such that
\[
\frac{\partial}{\partial y^1}\Big|_{y_0}=\xi_0.
\]
Notice that $\xi_0$ is a principal direction of $\Sigma$ at $y_0$, thus 
$a_{1i}\left(y_0\right)=0,$ for  any $i>1.$
Consider the local function  $a_{11}=b(\frac{\partial}{\partial x^1},
\frac{\partial}{\partial x^1}).$ Then the function
\begin{equation}
\zeta=a_{11}\exp(\phi(\tau)+\beta\chi)
\end{equation}
attains maximum at  $y_0=\left(x_0,u(x_0)\right)$. Hence, it holds at $y_0$
\begin{align}
\label{estimativa-hessiano-conta3}
0 =(\ln
\zeta)_i=\frac{a_{11;i}}{a_{11}}+\dot\phi\tau_i+\beta\chi_i
\end{align}
and the Hessian matrix with components
\begin{align*}
(\ln \zeta)_{i;j}
=\frac{a_{11;ij}}{a_{11}}-\frac{a_{11;i}a_{11;j}}{a^{2}_{11}}
+\dot\phi\tau_{i;j}+\ddot\phi\tau_i\tau_j+\beta\chi_{i;j}
\end{align*}
is negative-definite. Thus
\begin{align}
\begin{split}
\label{estimativa-hessiano-conta4}
G^{ij}(\ln \zeta)_{i;j} =&\frac{1}{a_{11}}G^{ij}a_{11;ij}
-\frac{1}{a^{2}_{11}}G^{ij}a_{11;i}a_{11;j}
+\dot\phi G^{ij}\tau_{i;j}\\ &
+\ddot\phi G^{ij}\tau_i\tau_j +\beta G^{ij}\chi_{i;j}\leq 0.
\end{split}
\end{align}
We may rotate the coordinates $(y^2, \ldots,y^n)$ in such a
way that the new coordinates diagonalize $\{a_{ij}(y_0)\}.$
By Lemma \ref{derivada-general-f}  
$\{G^{ij}\}$ is also diagonal with $G^{ii}=\frac{1}{W}f_{i}$. We denote
$\kappa_{i}=a_{ii}(y_0)$ and choose indices in such a way that
\[
\kappa_{1}\geq\kappa_{2}\geq\cdot\cdot\cdot\geq\kappa_{n}.
\]
Moreover, we assume without loss of generality that $\kappa_{1}>1$
at $y_0$. Thus, according to Lemma \ref{derivada-general-f}, we have
\[
f_{1}\leq f_{2}\leq\cdot\cdot\cdot\leq f_{n}.
\]
From (\ref{estimativa-hessiano-conta4}),
\begin{align}
\label{estimativa-hessiano-conta5}
\sum_i \Big(\frac{1}{\kappa_{1}}f_{i}a_{11;ii}
-\frac{1}{\kappa_{1}^{2}}f_{i}|a_{11;i}|^{2}+\dot\phi
f_{i}\tau_{i;i}+\ddot\phi f_{i}|\tau_i|^{2}  +\beta
f_{i}\chi_{i;i}\Big)\leq 0.
\end{align}
Now, we differentiate covariantly with respect to the metric
$(g_{ij})$ in $\Sigma$ the equation  (\ref{P2-1}) in the
direction of $\frac{\partial}{\partial y^1}|_{y_0}$ obtaining
$F^{ij}a_{ij;1}=\Psi_1$
and differentiating again
\begin{equation}
\label{estimativa-hessiano-conta6}
F^{ij}a_{ij;11} +F^{ij,kl}a_{ij;1}a_{kl;1}=\Psi_{1;1}.
\end{equation}
From the Simons formula (\ref{SIMONS-FORMULA}) we have
\begin{align}
\begin{split}
\label{estimativa-hessiano-conta7}
F^{ij}a_{ij;11}  = F^{ii}a_{ii;11}=\sum_{i}\big(f_{i}a_{11;ii}+\kappa_{1}
f_{i}\kappa_{i}^{2}-\kappa_{1}^{2} f_{i}\kappa_{i} \\
+\kappa_{1} f_{i}\bar R_{i0i0} -\bar R_{1010}
f_{i}\kappa_{i}+f_{i}\bar R_{i1i0;1}-f_{i}\bar R_{1i10;i}\big).
\end{split}
\end{align}
It follows from $c_0\leq\sum_{i}f_{i}\lambda_{i}\leq f=\Psi$ that
\begin{align*}
\begin{split}
 F^{ij}a_{ij;11} \leq & -\kappa_{1}^{2}c_0+|\bar
R_{1010}|\Psi \\ &+\sum_{i}\big(f_{i}a_{11;ii}+\kappa_{1}f_{i}\kappa_{i}^{2}
+\kappa_{1}f_{i}\bar R_{i0i0}
 +  f_{i}\bar R_{i0i0;1}-f_{i}\bar R_{1010;i}\big).
 \end{split}
\end{align*}
Combining this expression and  (\ref{estimativa-hessiano-conta6}) we obtain
\begin{align*}
\begin{split}
\sum_{i}f_{i}a_{11;ii} \geq &\Psi_{1;1}-F^{ij,kl}a_{ij;1}a_{kl;1}
+\kappa_1^2 c_0-|\bar
R_{1010}|\psi\\
& - \sum_i\big(\lambda_{1}f_{i}\lambda_{i}^{2}
-\lambda_{1}f_{i}\bar R_{i0i0} -  f_{i}\bar R_{i0i0;1}+f_{i}\bar
R_{1010;i}\big).
\end{split}
\end{align*}
Replacing this equation into (\ref{estimativa-hessiano-conta5}) we get
\begin{align*}
\begin{split}
&\frac{1}{\kappa_{1}}\big(\Psi_{1;1}-F^{ij,kl}a_{ij;1}a_{kl;1}
+\kappa_1^2c_0 -|\bar R_{1010}|\Psi\big) 
\\ &-\frac{1}{\kappa_{1}}\sum_i\big(\kappa_{1}f_{i}\kappa_{i}^{2}
-\kappa_{1}f_{i}\bar R_{i0i0} 
 - f_{i}\bar R_{i0i0;1}+f_{i}\bar R_{1010;i}\big) \\ 
&+\sum_i\Big(\dot\phi f_{i}\tau_{i;i}-\frac{1}{\kappa_{1}^{2}}f_{i}|a_{11;i}|^{2}
+\ddot\phi f_{i}|\tau_i|^{2}
+ \beta  f_{i}\chi_{i;i}\Big)\leq 0.
\end{split}
\end{align*}
Therefore,
\begin{align*}
\frac{\Psi_{1;1}}{\kappa_{1}}&+\frac{1}{\kappa_{1}}\big(c_0\kappa_{1}^{2}-\Psi
|\bar R_{1010}|\big) -\frac{1}{\kappa_{1}}F^{ij,kl}a_{ij;1}a_{kl;1}
-\sum_{i}f_{i}\kappa_{i}^{2}\\
&-\sum_{i}f_{i}\bar R_{i0i0}  +\sum_i\Big(\dot\phi f_{i}\tau_{i;i}-\frac{1}{\kappa_{1}^{2}}f_{i}|a_{11;i}|^{2}
+\ddot\phi f_{i}|\tau_i|^{2}
+\beta f_{i}\chi_{i;i}\Big)\\ &
-\frac{1}{\kappa_{1}}\sum_{i}f_{i}\big(\bar R_{i0i0;1}-\bar
R_{1010;i}\big) \leq 0.
\end{align*}
It is well known that
\begin{align*}
\tau_i &=-a_i^k\eta_k \\ \tau_{i;j} &= -\eta^k a_{ii;k}-\eta^k\bar R_{kij0}-\tau a_i^ka_{kj},
\end{align*}
where $\eta^k$ are the components of the vector $\partial_t^T,$ which
is the projection of $\partial_t$ onto $T\Sigma.$
Hence, 
\begin{align*}
\dot\phi\sum_{i}f_{i}\tau_{i;i} =
-\dot\phi\Big(\sum_{i}\eta^k f_{i}a_{ii;k}
+\sum_{i}\eta^k\bar R_{kii0} f_{i}\Big)-\dot\phi\tau
\sum_{i}f_{i}\kappa_{i}^{2}.
\end{align*}
From $\sum_i f_ia_{ii;k}= \Psi_k,$
\begin{align*}
\dot\phi\sum_{i}f_{i}\tau_{i;i} = -\dot \phi\Big(\eta^k \Psi_k +\sum_{i}\eta^k \bar
R_{kii0}f_{i}\Big)-\dot\phi \tau
\sum_{i}f_{i}\kappa_{i}^{2}.
\end{align*}
We denote by $T= \sum_i f_i.$ By estimating the ambient curvature terms,
\[
\sum_{i}\eta^k\bar R_{kii0}f_{i}\leq CT.
\]
Then,
\begin{align*}
-\dot \phi\Big(\eta^k \Psi_k +\sum_{i}\eta^k \bar
R_{kii0}f_{i}\Big) \geq
-|\dot\phi|(C+CT).
\end{align*}
Therefore
\[
\dot \phi \sum_{i}f_{i}\tau_{i;i} \geq
-|\dot\phi|(C+CT)-\dot\phi\tau \sum_{i}f_{i}\kappa_{i}^{2}.
\]
Now, we suppose without loss of generality that
\begin{align*}
\kappa_{1} \geq\frac{1}{C}\sum_{i}|R_{i0i0;1}-R_{1010;i}|,\hspace*{.5cm}
-\frac{1}{\kappa_1}\Psi |\bar R_{1010}| \geq-C, \hspace*{.5cm} \textrm{and}
\hspace*{.5cm} \frac{\Psi_{1;1}}{\kappa_1}  \geq -C
\end{align*}
for a positive constant $C>0$. Since 
\[
\Psi_{1;1}=\Psi_{t;t}(u_1)^2+\Psi_tu_{1;1}+\Psi_{1;1},
\]
the above assumptions is allowed. Finally,
\[
-\sum_i f_i \bar R_{i0i0}\geq -T \max_i |\bar R_{i0i0}|\geq -CT.
\]
We conclude from these inequalities  that
\begin{align}
\label{estimativa-hessiano-conta8}
\begin{split}
-C-CT+c_0\kappa_{1}-\frac{1}{\kappa_{1}}F^{ij,kl}a_{ij;1}a_{kl;1}
-\sum_{i}f_{i}\kappa_{i}^{2}
- \frac{1}{\kappa_{1}^{2}}\sum_{i}f_{i}|a_{11;i}|^{2}\\
-|\dot\phi|(C+CT) -\dot\phi\tau \sum_{i}f_{i}\kappa_{i}^{2}
+\ddot\phi\sum_{i}f_{i}|\tau_i|^{2}+ \beta\sum_i f_i \chi_{i;i}\leq 0.
\end{split}
\end{align}

Now, to proceed further with our analysis we consider two cases.\\
{\it Case I:} In this case we suppose that
$\kappa_{n}\leq-\theta\kappa_{1}$ for some positive constant
$\theta$ to be chosen later.

Using (\ref{estimativa-hessiano-conta3}) 
and the Cauchy inequality we get
\begin{align}
\label{estimativa-hessiano-conta9} 
\frac{1}{\kappa_{1}^{2}}f_{i}|a_{11;i}|^{2}
=f_{i}|\dot\phi\tau_i+\beta\chi_i|^{2}\leq
(1+\frac{1}{\epsilon})\beta^{2}f_{i}|\chi_i|^{2}
+(1+\epsilon)\dot\phi^{2}f_{i}|\tau_i|^{2},
\end{align}
for any $\epsilon>0$ and any $1\leq i\leq n.$ Now we replace
the sum of the terms in (\ref{estimativa-hessiano-conta9}) in the
inequality  (\ref{estimativa-hessiano-conta8}) to obtain
\begin{align*}
\begin{split}
c_0\kappa_{1}-C(1+|\dot\phi|)-CT(1+|\dot\phi|)-\frac{1}{\kappa_{1}}F^{ij,kl}a_{ij;1}a_{kl;1}
-(1+\dot\phi\tau) \sum_{i}f_{i}\kappa_{i}^{2}\\
- (1+\frac{1}{\epsilon})\beta^2\sum_if_{i}|\chi_i|^{2}
+\big(\ddot\phi-(1+\epsilon)\dot\phi^{2}
\big)\sum_{i}f_{i}|\tau_i|^{2}+ \beta\sum_i f_i \chi_{i;i}\leq 0.
\end{split}
\end{align*}
Since $\{a_{ij}\}$ is diagonal
at $y_0,$ 
\[
\sum_{i}f_{i}|\tau_i|^{2}=\sum_{i}f_{i}\lambda_{i}^{2}|\eta_i|^{2}\leq
C\sum_{i}f_{i}\kappa_{i}^{2},
\]
so, it follows from (\ref{estimativa-hessiano-conta1}) that
\[
\big( \ddot\phi-(1+\epsilon)\dot\phi^{2}\big)
\sum_{i}f_{i}|\tau_i|^{2}\geq  \big( \ddot\phi-(1+\epsilon)\dot\phi^{2}\big)
C\sum_i f_i\kappa_i^2.
\]
Since $|D\chi|$ is a known data we have $\sum_if_{i}|\chi_i|^{2}\leq CT.$
Hence,
\begin{align}
\begin{split}
\label{estimativa-hessiano-conta10} 
c_0\kappa_{1}-C(1+|\dot\phi|)-\frac{1}{\kappa_{1}}F^{ij,kl}a_{ij;1}a_{kl;1}
-\big(1+|\dot\phi|+ (1+\frac{1}{\epsilon})\beta^{2}\big)CT\\
+\Big(-(1+\dot\phi\tau)+C\big( \ddot\phi-(1+\epsilon)\dot\phi^{2}\big)
\Big)\sum_i f_i\kappa_i^2+ \beta\sum_i f_i \chi_{i;i}\leq 0.
\end{split}
\end{align}
Using the concavity of $F$ and the convexity of $\chi$
 we may discard the third and the last terms in the
left-hand side of (\ref{estimativa-hessiano-conta10}) since they are nonnegative, obtaining
\[
-C_1(\beta) -C_{2}(\beta)T+c_0\kappa_1+\hat
C\sum_{i}f_{i}\kappa_{i}^{2}\leq 0,
\]
where $C_1$ depends linearly on $\beta$ and $C_2$ depends
quadratically on $\beta$.  Since $f_{n}\geq\frac{1}{n}T$, we have
\[
\sum_{i}f_{i}\kappa_{i}^{2}\geq f_{n}\kappa_{n}^{2}\geq
\frac{1}{n}\theta^{2}T\kappa_{1}^{2}.
\]
Thus it follows that
\begin{align}
\label{estimativa-hessiano-conta11} 
-C_1 - C_2 T + c_0\kappa_{1}+\hat
C\frac{1}{n}\theta^{2}T\kappa_{1}^{2}\leq 0.
\end{align}
This inequality shows that  $\kappa_{1}$ has a uniform upper bound. In fact, notice that the coefficients
of the terms in $T$ in (\ref{estimativa-hessiano-conta11}) are
\[
\hat C\frac{1}{n}\theta^{2}\kappa_{1}^{2}-C_2.
\]
Then, if $\kappa_1\geq \bar C$ for a (suitable) uniform constant $\bar C,$ we have
\[
\hat C\frac{1}{n}\theta^{2}\kappa_{1}^{2}-C_2\geq 0.
\]
In this case, since $T=\sum_i f_i\geq 0,$ we 
may discard the terms in $T$ in (\ref{estimativa-hessiano-conta11}) to obtain
$-C_1+c_0\kappa_1\leq 0,$ i.e.,
\[
\kappa_1\leq \frac{C_1}{c_0}.
\]
{\it Case II:} In this case we assume that
$\kappa_{n}\geq-\theta\kappa_{1}$. Hence,
$\kappa_{i}\geq-\theta\kappa_{1}$. We then group the indices 
$\{1,...,n\}$ in two sets 
\begin{align*}
I_1&=\{j;f_{j}\leq4f_{1}\}, \\
I_2&=\{j;f_{j}>4f_{1}\}.
\end{align*} 
Using (\ref{estimativa-hessiano-conta9}), we have for $i\in I_1$
\begin{align*}
\frac{1}{\kappa_{1}^{2}}f_{i}|a_{11;i}|^{2}
&\leq (1+\epsilon)\dot\phi^{2}f_{i}|\tau_i|^{2}
+(1+\frac{1}{\epsilon})(\beta)^{2}f_{i}|\chi_i|^{2}\\
&\leq(1+\epsilon)\dot\phi^{2}f_{i}|\tau_i|^{2}
+C(1+\frac{1}{\epsilon})(\beta)^{2}f_{1}.
\end{align*}
Therefore, it follows from (\ref{estimativa-hessiano-conta9}) that
\begin{align*}
 &-C-CT +c_0\kappa_{1}
-\frac{1}{\lambda_{1}}F^{ij,kl}a_{ij;1}a_{kl;1}
-\big(1+\dot\phi\tau
\big)\sum_{i}f_{i}\kappa_{i}^{2} 
 - \frac{1}{\kappa_{1}^{2}}\sum_{j\in
I_2}f_{j}|a_{11;j}|^{2} \\ & -|\dot\phi|(C+CT)
 +\big(\ddot\phi-(1+\epsilon)\dot\phi^{2}\big)\sum_{i}f_{i}|\tau_i|^{2} 
 -  C(1+\frac{1}{\epsilon})\beta^{2}f_{1}
 +\beta\sum_i f_i\chi_{i;i}\leq 0.
\end{align*}
Notice that we had summed up to the inequality the non-positive
terms
\[
-(1+\epsilon)|\dot\phi|^2\sum_{i\in I_2} f_i |\tau_i|^2.
\]
Using that $|\tau_i| =|\kappa_i \eta_i|\le C\kappa_i$ we may conclude as above that
\begin{equation}
\label{estimativa-hessiano-conta12} 
-\big(1+\dot\phi\tau\big)\sum_{i}f_{i}\kappa_{i}^{2}
+\big(\ddot\phi-(1+\epsilon)\dot\phi^{2}\big)\sum_{i}f_{i}|\tau_i|^{2}\geq
\hat C\sum_{i}f_{i}\kappa_{i}^{2}
\end{equation}
for some positive constant $\hat C>0$. Thus
\begin{align}
\begin{split}
\label{estimativa-hessiano-conta12*}
& -C-CT +c_0\kappa_{1}
-\frac{1}{\kappa_{1}}F^{ij,kl}a_{ij;1}a_{kl;1} +\hat
C\sum_{i}f_{i}\kappa_{i}^{2}
\\&- \frac{1}{\kappa_{1}^{2}}\sum_{j\in
I_2}f_{j}|a_{11;j}|^{2}-|\dot\phi|(C+CT)
-C\big(1+\frac{1}{\epsilon}\big)\beta^{2}f_{1}+\beta\sum_i f_i\chi_{i;i}\leq 0.
\end{split}
\end{align}
Using the Codazzi equation $a_{1j;1}=a_{11;j}+\bar R_{01j1}$
and Lemma \ref{derivada-general-f} we get
\begin{align*}
 -\frac{1}{\kappa_{1}}F^{ij,kl}a_{ij;1}a_{kl;1} &= -\frac{1}{\kappa_{1}}
 \sum_{k,l}f_{kl}a_{kk;1}a_{ll;1}-\frac{1}{\kappa_{1}}\sum_{k\neq
l} \frac{f_k-f_l}{\kappa_k-\kappa_l}\eta_{kl}^2\\ &
\geq -\frac{2}{\kappa_{1}}\sum_{j\in
I_2}\frac{f_{1}-f_{j}}{\kappa_{1}-\kappa_{j}}(a_{1j;1})^{2} \\& =
-\frac{2}{\kappa_{1}}\sum_{j\in
I_2}\frac{f_{1}-f_{j}}{\kappa_{1}-\kappa_{j}}\big(a_{11;j} +\bar
R_{01j1}\big)^{2},
\end{align*}
since $1\notin I_2$ and $\frac{f_k-f_l}{\kappa_k-\kappa_l}\leq 0.$
We claim that for all $j\in I_2$ it holds the inequality
\begin{align}
\label{estimativa-hessiano-conta13} 
-\frac{2}{\kappa_{1}}\frac{f_{1}-f_{j}}{\kappa_{1}
-\kappa_{j}}\geq\frac{f_{j}}{\kappa_{1}^{2}}.
\end{align}
This is equivalent to
\[
2f_{1}\kappa_{1}\leq f_{j}\kappa_{1}+f_{j}\kappa_{j}.
\]
It is clear that $j\in I_2$ implies $f_{j}>4f_{1}.$ If
$\kappa_{j}\geq0$, this is obvious. If $\kappa_{j}<0$, then
$-\theta\kappa_{1}\leq\kappa_{j}<0,$ and then
\[
f_{j}\kappa_{1}+f_{j}\kappa_{j}\geq(1-\theta)f_{j}\kappa_{1} \geq
4(1-\theta)f_{1}\kappa_{1}\geq2f_{1}\kappa_{1},
\]
if we choose  $\theta=1/2$. Hence, with this choice,
we can use (\ref{estimativa-hessiano-conta13}) to obtain
\begin{align*}
-\frac{1}{\kappa_{1}}F^{ij,kl}a_{ij;1}a_{kl;1}& \geq \sum_{j\in I_2}\frac{f_{j}}{\kappa_{1}^{2}}
\big(a_{11;j} +\bar R_{01j1}\big)^{2} \\&= \sum_{j\in I_2}\frac{f_{j}}{\kappa_{1}^{2}}(a_{11;j})^{2}
+2\sum_{j\in I_2}\frac{f_{j}}{\kappa_{1}^{2}}a_{11;j}\bar R_{01j1}
+\sum_{j\in I_2}\frac{f_{j}}{\kappa_{1}^{2}}(\bar R_{01j1})^{2}.
\end{align*}
Using this inequality in (\ref{estimativa-hessiano-conta12*}) and estimating the 
curvature term $(R_{01j1})^2$ we obtain
\begin{align*}
 &-C-CT +c_0\kappa_{1}
+\sum_{j\in I_2}\frac{f_{j}}{\kappa_{1}^{2}}(a_{11;j})^{2}
+2\sum_{j\in I_2}\frac{f_{j}}{\kappa_{1}^{2}}a_{11;j}\bar R_{01j1} +\hat
C\sum_{i}f_{i}\kappa_{i}^{2}
\\&- \frac{1}{\kappa_{1}^{2}}\sum_{j\in
I_2}f_{j}|a_{11;j}|^{2}-|\dot\phi|(C+CT)
-C\big(1+\frac{1}{\epsilon}\big)\beta^{2}f_{1}+\beta\sum_i f_i\chi_{i;i}\leq 0.
\end{align*}
From (\ref{estimativa-hessiano-conta3}),
\begin{align*}
 &-C-CT +c_0\kappa_{1}
-2\sum_{j\in I_2}\frac{f_{j}}{\kappa_{1}}(\dot\phi\tau_j+\beta\chi_j)
\bar R_{01j1} +\hat C\sum_{i}f_{i}\kappa_{i}^{2}
\\&-|\dot\phi|(C+CT)
-C\big(1+\frac{1}{\epsilon}\big)\beta^{2}f_{1}+\beta\sum_i f_i\chi_{i;i}\leq 0.
\end{align*}
Since $\dot\phi<0,$  $\kappa_j\leq
\kappa_1$ and $-\kappa_j\leq \theta\kappa_1<\kappa_1$
we have 
\begin{align*}
 2\frac{f_{j}}{\kappa_{1}}(-\dot\phi\tau_j)\bar R_{01j1}=
2\frac{f_{j}}{\kappa_1}\dot\phi\kappa_j\eta_j\bar R_{01j1}\geq
2\frac{f_{j}}{\kappa_1}\dot\phi|\kappa_j||\eta_j\bar R_{01j1}|\geq
2f_{j}\dot\phi|\eta_j\bar R_{01j1}|.
\end{align*}
We also suppose, without loss of generality, that
\[
\kappa_1\geq \frac{3| \chi_j\bar R_{01j1}|}{\gamma_0}
\]
for all $j\in I_2,$ where $\gamma_0$ is a positive constant that
satisfies
\[
\chi_{i;i}\geq \gamma_0>0, \hspace*{.5cm}\forall\,\, 1\leq i\leq n.
\]
Note that this assumption is equivalent to
\[
 \frac{\gamma_0}{3}\ge \frac{|\chi_j \bar R_{01j1}|}{\kappa_1},
\]
which implies
\begin{align*}
-2\sum_{j\in I_2} \frac{f_j}{\kappa_1}\beta\chi_j \bar R_{01j1} 
\geq &-2\sum_{j\in I_2} \frac{f_j}{\kappa_1}\beta|\chi_j \bar R_{01j1}|
\geq -2 \sum_{j\in I_2}\frac{\beta f_j \gamma_0}{3}\geq -2 \frac{\beta \gamma_0}{3}T.
\end{align*}
These inequalities imply that
\begin{align*}
 &-C-CT +c_0\kappa_{1}+2\sum_{j\in I_2}f_{j}\dot\phi|\eta_j\bar R_{01j1}|
 -2 \frac{\beta \gamma_0}{3}T \\ &+\hat C\sum_{i}f_{i}\kappa_{i}^{2} 
-|\dot\phi|(C+CT)-C\big(1+\frac{1}{\epsilon}\big)\beta^{2}f_{1}+
\beta\sum_i f_i\chi_{i;i}\leq 0.
\end{align*}
Since $\sum_{j\in I_2}f_{j}\leq T$, $|\eta_j\bar R_{j1}|\leq C$ and
$\dot\phi<0$ we have
\[
-C- \big(C+C|\dot\phi| + 2\beta\frac{\gamma_0}{3}-\beta \gamma_0\big)
T-C\big(1+\frac{1}{\epsilon}\big)\beta^{2}f_{1}+c_0\kappa_{1}
+\hat C f_{1}\kappa_{1}^{2}\leq 0.
\]
Choosing $\beta>0$ sufficiently large, the term in $T$ is positive
and we may discard it, obtaining
\begin{equation}
\label{estimativa-hessiano-conta14} 
-C-C_2(\beta)f_1+c_0\kappa_{1}+\hat C f_{1}\kappa_{1}^{2}\le 0,
\end{equation}
where $C_2$ depends quadratically on $\beta$. Reasoning as above, 
we conclude that this inequality gives an upper bound for $\kappa_1.$
\end{proof}


\section{Proof of the Existence --The Continuity Method}
\label{section7}

In this section we complete the proof of Theorems \ref{teorema1} and 
\ref{teorema2} by the continuity 
method with the aid of the {\it a priori} estimates established previously. 
We apply the continuity method to the family of problems
\begin{align}
\label{continuiti-metodo}
\begin{split}
\left\{\begin{array}{cc}F[u] =t\Psi+(1-t)F[\chi_0] & 
\hspace*{.5cm}\textrm{in}\, \, \Omega \\ 
u=t\varphi +(1-t)\chi_0 & \hspace*{.5cm}\textrm{on}\, \, \partial \Omega,
\end{array} \right.
\end{split}
\end{align}
for $0\leq t\leq 1,$ where $\chi_0$ is a multiple of the locally strictly convex function
$\chi\in C^{2}(\overline\Omega)$ that satisfy
\[
0<F[\chi_0]\leq \Psi.
\]
Clearly all our preceding estimates are independent of the parameter
$t,$ so that under the hypotheses of Theorem \ref{teorema1} and \ref{teorema2},
we conclude an {\it a priori} estimate of the form
\[
|u|_{2,\alpha}\leq C
\]
with constant $C$ depending on $n, \Omega, \underline u, \chi, \varphi $
and $\Psi,$ and hence the unique solvability of the Dirichlet problems 
(\ref{continuiti-metodo}), for all $0\leq t\leq 1,$ then follows.

\subsection*{Acknowledgment}

We want to thank Professor Joel Spruck who made valuable comments and point out some minor 
mistakes in a previous version of this paper during the XVII Brazilian School of Differential Geometry.


\end{document}